\numberwithin{equation}{subsection}
\newcommand\cC{{\mathcal C}}
\newcommand\cE{{\mathcal E}}
\newcommand\cH{{\mathcal H}}
\newcommand\cM{{\mathcal M}}
\newcommand\cO{{\mathcal O}}
\newcommand\cS{{\mathcal S}}
\newcommand\cW{{\mathcal W}}
\newcommand\cY{{\mathcal Y}}
\newcommand\cZ{{\mathcal Z}}
\newcommand\fB{{\mathfrak B}}
\newcommand\fC{{\mathfrak C}}
\newcommand\fD{{\mathfrak D}}
\newcommand\fL{{\mathfrak L}}
\newcommand\fZ{{\mathfrak Z}}
\newcommand\C{{\mathbb C}}
\newcommand\bD{{\mathbb D}}
\newcommand\bH{{\mathbb H}}
\newcommand\N{{\mathbb N}}
\newcommand\bP{{\mathbb P}}
\newcommand\Q{{\mathbb Q}}
\newcommand\R{{\mathbb R}}
\newcommand\bV{{\mathbb V}}
\newcommand\Z{{\mathbb Z}}
\newcommand\hC{{\widehat C}}
\newcommand\hD{{\widehat D}}
\newcommand\hM{{\widehat M}}
\newcommand\wY{{\widetilde Y}}
\newcommand\wZ{{\widetilde Z}}
\newcommand\oL{{\overline L}}
\newcommand\oO{{\overline O}}
\newcommand\bfG{{\mathbf G}}
\newcommand\bfH{{\mathbf H}}
\DeclareMathOperator{\diag}{diag}
\DeclareMathOperator{\GL}{GL}
\DeclareMathOperator{\Hom}{Hom}
\DeclareMathOperator{\PGL}{PGL}
\DeclareMathOperator{\Sp}{Sp}
\DeclareMathOperator{\vol}{vol}
\newcommand\onto{\twoheadrightarrow}
\newcommand\into{\hookrightarrow}
\newtheorem{theorem}{Theorem}[section]
\newtheorem{conjecture}[theorem]{Conjecture}
\newtheorem{corollary}[theorem]{Corollary}
\newtheorem{lemma}[theorem]{Lemma}
\newtheorem{metatheorem}[theorem]{Meta-Theorem}
\newtheorem{proposition}[theorem]{Proposition}
\theoremstyle{definition}
\newtheorem{definition}[theorem]{Definition}
\newtheorem{example}[theorem]{Example}
\newtheorem{metadefinition}[theorem]{Meta-Definition}
\newtheorem{remark}[theorem]{Remark}
\title[Non-abelian Hodge locus]{On the non-abelian Hodge locus I}
\author{Philip Engel}
\address{Department of Mathematics, University of Georgia, Boyd Hall, Athens, GA 30602}
\email{philip.engel@uga.edu}
\author{Salim Tayou}
\address{Department of Mathematics, Dartmouth College, Kemeny Hall, Hanover, NH 03755, USA}
\email{salim.tayou@dartmouth.edu}
\date{\today}
\begin{document}

\begin{abstract}

We partially resolve conjectures of Deligne and Simpson 
concerning $\Z$-local systems on quasi-projective varieties
that underlie a polarized variation of Hodge structure.
For local systems with $\Q$-anisotropic monodromy, we prove
(1) a relative form of Deligne's finiteness theorem, for any 
family of quasi-projective varieties, and (2) algebraicity 
of the corresponding non-abelian Hodge locus.
\end{abstract}

\maketitle
\setcounter{tocdepth}{1}
\tableofcontents

\section{Introduction}

Let $\Pi=\pi_1(Y,*)$ be the fundamental group of a smooth
quasi-projective variety. A fundamental result of Deligne \cite{deligne87} is that, up to conjugacy, only finitely many representations 
$\rho\colon \Pi\to \GL_n(\Z)$ underlie a $\Z$-polarized pure variation of Hodge structure ($\Z$-PVHS) over $Y$.

In this paper, we are primarily concerned with two questions:
\begin{enumerate}
\item[(Q1)] If one deforms $Y$ in a topologically trivial family $\cY\to \cS$ of smooth quasi-projective varieties, then do only finitely many representations of $\Pi$ underlie a $\Z$-PVHS on $Y_s$ for some $s \in \cS$? \vspace{2pt}
\item[(Q2)] In the relative moduli space $M_{\rm dR}(\cY/\cS,\GL_n)$ of vector bundles with flat connection, is the locus underlying a $\Z$-PVHS algebraic?
\end{enumerate}

The first question is due to Deligne \cite[Question 3.13]{deligne87}. Simpson \cite[Conjecture 12.3]{simpson} posed
and made progress on the second question, proving that this locus is analytic.

Note that the two questions are related: Q2 implies
Q1 because an algebraic set will have only finitely many 
connected components, and the representation of $\Pi$ is 
locally constant along a locus of flat connections underlying a $\Z$-PVHS.

We answer both questions, under the following assumption:

\begin{definition} 
Let $\rho\colon \Pi\to \GL_n(\Z)$ be a group representation and let $\bfH$ denote
the $\Q$-Zariski closure of ${\rm im}(\rho)$ in $\GL_n(\Q)$. We say that $\rho$ has {\it $\Q$-anisotropic monodromy} if $\bfH$ is anisotropic as an algebraic group over $\Q$, i.e.~any non-constant cocharacter $\mathbb{G}_m\to \bfH$ is central.

\end{definition}

When $\bfH$ is semisimple, as is the case for any $\Z$-PVHS, this condition is, by \cite[Thm.~11.8]{Borel-Harish-Chandra}, equivalent to $\bfH(\Z)\backslash \bfH(\R)$ being compact, where $\bfH(\Z):=\bfH(\R)\cap \GL_n(\Z)$.

\begin{theorem}\label{main} 
Let $\cY\to \cS$ be a topologically trivial family of smooth quasi-projective varieties. Then the flat connections in $M_{\rm dR}(\cY/\cS,\GL_n)$ underlying a $\Z$-PVHS with $\Q$-anisotropic monodromy form an algebraic subvariety. \vspace{2pt}

In particular, if $\Pi = \pi_1(Y_0,*)$ for some
$0\in \cS$, then only finitely many
representations of $\Pi$ underlie a $\Z$-PVHS with $\Q$-anisotropic monodromy on some fiber 
$Y_s$, up to the mapping class group action of $\pi_1(\cS,0)$.
\end{theorem}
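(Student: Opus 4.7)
The plan is to combine the relative finiteness theorem announced as part (1) of the paper's main results with Simpson's analyticity of the non-abelian Hodge locus, and then to upgrade analytic to algebraic via a definable GAGA argument. The compact-type hypothesis is what guarantees that the relevant period map has a compact, and hence algebraic, target.

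\emph{Reduction to finitely many strata.} I would begin by invoking the relative form of Deligne's finiteness theorem, which produces only finitely many conjugacy classes of compact-type monodromy representations $\rho\colon \pi_1(Y_s,*)\to \GL_n(\Z)$ arising from a $\Z$-PVHS somewhere in the family. Stratifying the non-abelian Hodge locus $\cH\subset M_{\rm dR}(\cY/\cS,\GL_n)$ by conjugacy class of monodromy, it suffices to prove each stratum $\cH_\rho$ is algebraic. For fixed $\rho$ with Zariski closure $\bfH\subset\GL_n$, the compact-type hypothesis forces $\bfH$ to be reductive and $\Q$-anisotropic modulo center, so that $\Gamma:=\bfH(\Z)$ is cocompact in $\bfH(\R)$.

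\emph{Period map to a compact target.} Fixing also the Hodge cocharacter associated to such a PVHS, one obtains the classifying space $D=\bfH(\R)/V$, on which $\Gamma$ acts properly discontinuously with compact quotient $\Gamma\backslash D$. After passing to a suitable étale cover of $\cS$ that trivializes the variation of $\pi_1(Y_s,*)$ along paths, I would construct a holomorphic relative period map $\varphi\colon\cH_\rho\to\Gamma\backslash D$ sending a flat connection to its Hodge structure. Here the essential role of the compact-type hypothesis is that $\Gamma\backslash D$ is compact, circumventing the need for Baily--Borel-type compactifications or boundary analysis of degenerations.

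\emph{Algebraicity via o-minimal GAGA.} Simpson's theorem already shows that $\cH_\rho$ is closed analytic in $M_{\rm dR}(\cY/\cS,\GL_n)$. To promote this to algebraicity, form the graph of $\varphi$ inside $M_{\rm dR}(\cY/\cS,\GL_n)\times(\Gamma\backslash D)$. Relative to a Siegel fundamental domain for $\Gamma\subset\bfH(\R)$, the graph becomes $\R_{\rm an,exp}$-definable inside a definable ambient space whose algebraic part is $M_{\rm dR}(\cY/\cS,\GL_n)$. Applying the o-minimal Chow/GAGA theorems of Bakker--Brunebarbe--Tsimerman and Peterzil--Starchenko, the graph is algebraic; projecting to $M_{\rm dR}$ gives algebraicity of $\cH_\rho$, and the main theorem follows by taking the finite union over the finitely many $\rho$ produced in the first step. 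The "in particular" follows because an algebraic set has only finitely many connected components and the monodromy of a $\Z$-PVHS is locally constant.

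\emph{Main obstacle.} The hardest step, I expect, is not the closing GAGA argument but the construction of a sufficiently well-behaved global relative period map. Period maps are classically defined only on universal covers, and globalizing across a base $\cS$ whose fibers' fundamental groups are not canonically identified — and along which $\pi_1(Y_s,*)$ itself may vary — requires careful passage to étale covers and a precise formulation of what "underlying a $\Z$-PVHS" means in families. A second technical subtlety is establishing definability (not merely analyticity) of $\varphi$ uniformly in $s$, which requires compatible choices of fundamental domains as $s$ varies. Compact type is again the structural input that makes this uniform choice feasible, since it rules out the non-compact boundary behavior of arithmetic quotients that would otherwise need to be controlled.
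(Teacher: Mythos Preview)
Your high-level architecture is right—reduce to finitely many monodromy types via relative Deligne finiteness, then exploit compactness of $\Gamma\backslash\bD$—but the specific mechanism you propose for upgrading analytic to algebraic has a genuine gap. First, the map $\varphi\colon\cH_\rho\to\Gamma\backslash\bD$ is not well-defined as stated: a point of $\cH_\rho$ is a flat connection on an entire fiber $Y_s$, and the associated $\Z$-PVHS produces a period \emph{map} $\Phi_s\colon Y_s\to\Gamma\backslash\bD$, not a single Hodge structure. Second, and more seriously, even after repairing this (say by working on $\cY\times_\cS\cH_\rho$), the o-minimal step is circular. The definability theorems of Bakker--Brunebarbe--Tsimerman apply to period maps with \emph{algebraic} source, whereas here the source involves $\cH_\rho$, which is only analytic and is exactly the object whose algebraicity is in question. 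Moreover $\Gamma\backslash\bD$ is in general not an algebraic variety (only when $\bD$ is Hermitian symmetric), so the phrase ``the graph is algebraic'' has no meaning in $M_{\rm dR}\times(\Gamma\backslash\bD)$. To run Peterzil--Starchenko you would need $\cH_\rho$ to be definable directly in $M_{\rm dR}$, but its two defining conditions—membership in $N_{\rm dR}$ and having monodromy conjugate to $\rho$—arise through the non-abelian Hodge correspondence and the Riemann--Hilbert correspondence respectively, both transcendental, and neither is known to produce definable loci in this relative setting.

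The paper's route is genuinely different and sidesteps this circularity by shifting attention from period maps to period \emph{images} $\Phi_s(Y_s)\subset X_\Gamma=\Gamma\backslash\bD$. These are closed analytic subspaces tangent to the Griffiths distribution $T^{||}$, hence lie in a \emph{parallel Douady space} $\fD^{||}$ of the compact polarized distribution manifold $(X_\Gamma,T^{||},L,h)$. The paper shows each component of $\fD^{||}$ is compact (bounded Griffiths volume forces sequential compactness of cycles) and, under a maximal-variation hypothesis guaranteed by Zariski-dense monodromy, Moishezon—via \emph{classical} Serre GAGA applied to the universal Hodge data $(F^\bullet_i,\nabla_i)$ restricted to each projective period image, not via o-minimal GAGA. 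A local branch of ${\rm NHL}_c$ is then identified with a component of the relative Hom-space ${\rm Hom}_{\rm fiber}(\cY/\cS,\fZ/\fC)$, where $\fZ\to\fC$ is the universal family over the relevant Moishezon Douady component; a Moishezon subset of the quasi-projective variety $M_{\rm dR}(\cY/\cS,\GL_n)$ is then algebraic. Finiteness of the number of components is obtained from Simpson's slope inequalities on the Hodge bundles $F^p$, which bound $A^{d-r}\cdot L^r$ and hence the Griffiths volume of any period image uniformly over $\cS$.
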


We refer to \Cref{identification} for more details on the mapping class group action of $\pi_1(\cS,0)$ mentioned in \Cref{main}. 

A useful feature when the monodromy is 
$\Q$-anisotropic is that, due to Griffiths' 
generalization of the Borel extension theorem,
a $\Z$-PVHS on $Y_s$ extends, after a finite \'etale base change
of degree bounded solely in 
terms of $n$ and $\pi_1(Y_s)$,
over a projective simple 
normal crossings compactification $\overline{Y}_s$. This holds 
because there is
an \'etale cover of bounded degree 
$\widetilde{Y}_s\to Y_s$ for which
the pullback of any $\Z$-local system of
rank $n$ has monodromy contained
in a torsion free subgroup of $\GL_n(\Z)$.

Replacing $\cS$ with
a finite \'etale cover, we uniformly
pass to such an \'etale base change 
$\wY_s\to Y_s$ 
for all $s\in \cS$.
Then, we stratify $\cS$ 
into loci over which $\cY$
admits a relative simple normal crossings compactification. 
This is achieved by induction on dimension,
applying resolution of singularities over the 
generic point of each stratum. 
Observe
that Q1 and Q2 are Zariski-local on $\cS$.
So both Q1 and Q2 (when the monodromy is $\Q$-anisotropic)
reduce to families of smooth projective varieties. Note that the algebraicity
on a finite \'etale cover of $\cS$ implies
it for $\cS$ itself. Hence,
for the remainder of the paper, we will assume that
$\cY\to \cS$ is smooth projective, and
$\cS$ is quasiprojective.

Our result also answers a question asked by Landesman and Litt \cite[Question 8.2.1]{landesman-litt}, when the monodromy is $\Q$-anisotropic. 

\subsection{The non-abelian Hodge locus}
In a seminal paper \cite{simpson-moduli-2}, Simpson defined 
$M_{\rm Dol}(\cY/\cS,\mathrm{GL}_n)$, resp.~$M_{\rm dR}(\cY/\cS,\mathrm{GL}_n)$, the relative Dolbeault space,
resp.~the relative de Rham space: 
$M_{\rm Dol}(\cY/\cS,\mathrm{GL}_n)$ is a relative moduli space
of semistable Higgs bundles $(\cE,\phi)$ with vanishing rational Chern classes and $M_{\rm dR}(\cY/\cS,\mathrm{GL}_n)$ is a relative moduli space of vector bundles with flat connection.

Let $N_{\rm Dol} \subset M_{\rm Dol}(\cY/\cS,\mathrm{GL}_n)$ be the fixed point set of the $\mathbb G_m$-action $(\cE,\phi)\mapsto (\cE,t\phi)$ and let $N_{\rm dR}$ be its image in $M_{\rm dR}(\cY/\cS,\mathrm{GL}_n)$ under the non-abelian Hodge correspondence.
Define $$M_{\rm dR}(\cY/\cS,\mathrm{GL}_n(\Z))\subset M_{\rm dR}(\cY/\cS,\mathrm{GL}_n)$$ to be the flat bundles having integral monodromy representations on a fiber of $\cY\rightarrow \cS$. Following Simpson \cite[\S 12]{simpson}, we define the non-abelian Hodge locus, called the Noether-Lefschetz locus in {\it loc. cit.}, 
 \[\mathrm{NHL}(\cY/\cS,\mathrm{GL}_n):=N_{\rm dR}\cap M_{\rm dR}(\cY/\cS,\mathrm{GL}_n(\Z)).\]  
 These are the flat vector bundles underlying a $\Z$-PVHS. It follows from Simpson's work, see \cite[Theorem 12.1]{simpson}, that the morphism $\mathrm{NHL}(\cY/\cS,\mathrm{GL}_n)\rightarrow S$ is proper, $\mathrm{NHL}(\cY/\cS,\mathrm{GL}_n)$ has the structure of a complex analytic space, and that both inclusions $\mathrm{NHL}(\cY/\cS,\mathrm{GL}_n)\hookrightarrow M_{\rm dR}(\cY/\mathcal S,\GL_n)$ and $\mathrm{NHL}(\cY/\cS,\mathrm{GL}_n)\hookrightarrow M_{\rm Dol}(\cY/\cS,\GL_n)$ are complex analytic. 
 
 As a consequence of the non-abelian Hodge conjecture, see \cite[Conjecture 12.4]{simpson}, Simpson makes the following prediction, see \cite[Conjecture 12.3]{simpson}. 
\begin{conjecture}\label{conjecture}
The analytic variety $\mathrm{NHL}(\cY/\cS,\mathrm{GL}_n)$ is an algebraic variety and the inclusions into $M_{\rm dR}(\cY/\mathcal S,\GL_n)$ and $M_{\rm Dol}(\cY/\cS,\GL_n)$ are algebraic morphisms.  
\end{conjecture}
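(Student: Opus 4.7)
The plan is to establish \Cref{conjecture} in the compact type setting announced in \Cref{main}; the general case appears to require ideas beyond what is sketched below. As observed in the excerpt, one may harmlessly assume that $\cY\to\cS$ is smooth projective with $\cS$ quasi-projective, in which case $N_{\rm dR}$ is already a closed subvariety of $M_{\rm dR}(\cY/\cS,\GL_n)$, since the $\bG_m$-fixed locus on the Dolbeault side is algebraic and the non-abelian Hodge correspondence identifies the underlying schemes set-theoretically. The content of the conjecture in the compact type case is therefore to show that inside $N_{\rm dR}$, the further condition that the monodromy be integral cuts out an algebraic subset, and that the resulting scheme structure is respected by the Dolbeault-to-de Rham identification.

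The approach I would take is to decompose the NHL into pieces indexed, up to conjugation, by a Mumford-Tate group $\bfH$ of compact type together with a Hodge representation $\bfH\to\GL_n$. For each such pair, a point of the corresponding component carries a horizontal period map
\[
\varphi_s\colon Y_s\longrightarrow \Gamma_s\backslash D_{\bfH},
\]
where $D_{\bfH}$ is the period domain and $\Gamma_s\subset\bfH(\Z)$ is of finite index, hence by the compact type hypothesis a cocompact lattice in $\bfH(\R)$. Cocompactness makes $\Gamma_s\backslash D_{\bfH}$ a compact complex analytic space, and Griffiths' extension of Borel's theorem (the same input that allowed the reduction to projective $\cY\to\cS$) furnishes extensions of $\varphi_s$ across any simple normal crossings compactification of $Y_s$. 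This is exactly the setting in which the tame/o-minimal theory of period maps applies.

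The next step is to construct a relative period parameter space $\cP_{\bfH}\to\cS$ classifying the data of a horizontal map $(s,\varphi_s)$ into a fixed $\Gamma\backslash D_{\bfH}$, together with a morphism $\cP_{\bfH}\to M_{\rm dR}(\cY/\cS,\GL_n)$ whose image is precisely the $\bfH$-component of the NHL. Using definability of period maps in $\R_{\rm an,exp}$, one realizes both $\cP_{\bfH}$ and its image as definable analytic subsets; the definable Chow theorem of Bakker-Brunebarbe-Tsimerman, in a form adapted to the relative setting over $\cS$, then promotes Simpson's analyticity to algebraicity. That the Dolbeault and de Rham inclusions are algebraic morphisms follows by symmetry, running the same argument through the $\bG_m$-fixed Higgs bundles and comparing via the non-abelian Hodge correspondence on fibers.

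The main obstacle will be passing from one compact type pair $(\bfH,\Gamma)$ at a time to a finite collection that exhausts the NHL: without such finiteness, the analytic NHL could a priori have infinitely many components indexed by conjugacy classes of representations, and algebraicity would fail. This is precisely the relative form of Deligne's finiteness theorem promised as part~(1) of the abstract. I therefore expect the bulk of the technical work to consist of (a) a delicate definable construction of $\cP_{\bfH}$ that is uniform in $s\in\cS$, and (b) a relative Deligne finiteness showing that only finitely many $(\bfH,\Gamma)$ actually arise across all fibers; once these are established, algebraicity of $\mathrm{NHL}(\cY/\cS,\GL_n)$ is a formal consequence of o-minimal GAGA applied component by component.
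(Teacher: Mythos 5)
Your proposal correctly identifies the two-step structure (relative Deligne finiteness, then algebraicity of each component) and correctly observes that the conjecture is only addressed here under the compact type hypothesis. However, it diverges from the paper's argument in two important respects, one a genuine stylistic difference and one a genuine gap.

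On the algebraicity mechanism: you propose to use definability in $\R_{\rm an,exp}$ together with the definable Chow theorem of Bakker–Brunebarbe–Tsimerman to pass from analyticity to algebraicity. The paper does not take this route. Instead, it exploits the compactness of $X_\Gamma=\Gamma\backslash\bD$ directly: it constructs \emph{parallel Douady and Barlet spaces} of Griffiths-transverse cycles in the polarized distribution manifold $(X_\Gamma,T^{||},L,h)$, proves these are proper (\Cref{properness}) by a Bishop/Harvey–Shiffman compactness argument using a carefully chosen auxiliary metric $\widetilde g=\omega_L+Ng^\perp$, and then proves them Moishezon (\Cref{main1}, \Cref{rigid-thm}) under a maximal-variation hypothesis that is verified via a Schur-lemma argument (\Cref{scale-factors}) when the monodromy is Zariski-dense. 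The algebraic structure is then extracted by applying classical GAGA to the filtered flat bundles $\{(F_i^\bullet,\nabla_i)\}$ over the fibers and embedding the Douady space into a compactified algebraic parameter space of such data (\Cref{cvhs-embed}), producing a meromorphic map to a Chow variety whose genericity forces the Douady component to be Moishezon. This is a different and in some ways more elementary package than the o-minimal one: it never leaves the category of compact complex spaces, and it produces explicit Moishezon parameter spaces $\fC$ and a relative Hom-space ${\rm Hom}_{\rm fiber}(\cY,\fZ)$ realizing $\mathrm{NHL}_c$ concretely. Your o-minimal route is plausible as far as it goes, but to carry it out you would still have to establish uniform definability of the family of period maps over $\cS$ and definability of the condition that a flat connection underlies a $\Z$-PVHS, neither of which is spelled out; the paper's properness-plus-Moishezon argument sidesteps this entirely. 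You should also be more careful with the opening claim that $N_{\rm dR}$ is automatically a closed \emph{algebraic} subvariety of $M_{\rm dR}$: the non-abelian Hodge correspondence is only a real-analytic homeomorphism, so algebraicity of $N_{\rm dR}$ is part of what must be proved, not a starting point.

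On the finiteness step, which you correctly flag as a prerequisite, the proposal contains no argument at all; you simply assert that "relative Deligne finiteness" is needed. This is the heart of the paper and the place where the compact-type hypothesis is used most essentially. The proof reduces via Lefschetz slicing to $\cC_g\to\cM_g$ and then splits $\cM_g$ into thick and thin parts. On the thick part Deligne's original Procesi-generator argument works almost verbatim. The thin part requires the new input: \Cref{key-lemma} shows that a length-contracting \emph{harmonic} map from a hyperbolic collar of bounded perimeter into a nonpositively curved simply connected manifold has image of a priori bounded diameter, independently of how short the core curve becomes; combined with the Bers pants decomposition, the quantitative collar lemma, \Cref{def-perim}, and the crucial observation (\Cref{monodromy-trivial}, via \Cref{bound-away} and Blanksby–Montgomery) that compact-type monodromy around short curves must be trivial after a bounded cover, one bounds the translation lengths of Procesi generators uniformly. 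None of this appears in your sketch, and without it the finiteness of components that you rightly identify as essential is unproved.
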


When the base $\cS$ is projective, Conjecture \ref{conjecture}
follows from Serre's GAGA theorem \cite{gaga}, see
\cite[Corollary 12.2]{simpson}.
Conjecture \ref{conjecture}
is the non-abelian analogue
of the main theorem of 
Cattani--Deligne--Kaplan \cite{CDK},
that the locus of Hodge
classes is algebraic, which is a 
consequence of the classical Hodge
conjecture.

There is a decomposition
$$ \mathrm{NHL}(\cY/\cS,\mathrm{GL}_n) =
\mathrm{NHL}_a(\cY/\cS,\mathrm{GL}_n)\sqcup \mathrm{NHL}_{i}(\cY/\cS,\mathrm{GL}_n)$$ according to whether the monodromy representation
is $\Q$-anisotropic or $\Q$-isotropic. Our main 
Theorem \ref{main} proves \Cref{conjecture} for the locus $\mathrm{NHL}_a(\cY/\cS,\mathrm{GL}_n)$. The case of
$\Q$-isotropic monodromy will be explored in future work.

\subsection{Strategy of the proof}
The proof splits into two parts, each of a rather different nature. We first prove Q1 using techniques from hyperbolic and metric geometry.
Then, the resolution of Q1 is used as input to prove Q2, using more algebraic and analytic techniques.

\subsubsection{Finiteness of monodromy representations}
By slicing by hyperplanes, Q1 can be reduced to the case of curves,
and in turn, to the universal family $\cC_{g,n}\to \cM_{g,n}$ of curves of genus $g\geq 2$ and with $n$ punctures, $n\geq 0$. Our assumption of the monodromy being $\Q$-anisotropic allows us to reduce to the case $n=0$. Let $$\Phi\colon C\to \Gamma\backslash \bD$$
be the period map of a $\Z$-PVHS with $\Q$-anisotropic monodromy on some $C\in \cM_g$.
Every genus $g$ Riemann surface $C$ admits a hyperbolic metric, and
Deligne's finiteness result relies critically
on the length-contracting property of $\Phi$
\cite[10.1]{griffithsIII}. But as
the curve $C\in \cM_g$ degenerates, the length-contracting property
alone ceases to be useful: The monodromy representation
will be determined by curves whose hyperbolic geodesic
representatives have length growing to infinity.

These geodesics grow in length as they cross hyperbolic collars
forming near the nodes of the limiting curve.
Thus, our key lemma (\Cref{collar-bound}) is that the image of a length-decreasing
harmonic map from a hyperbolic collar to a symmetric space is 
bounded, even as the transverse length to the collar grows to 
infinity.

\subsubsection{Algebraicity of ${\rm NHL}_a(\cY/\cS,\GL_n)$}

Our main tool for proving Q2 is an algebraization theorem
for Douady spaces of compact analytic subspaces
of Hodge manifolds $\Gamma\backslash \bD$ that are tangent to the Griffiths distribution and which
parameterize period images of $\Z$-PVHS's
with big monodromy.

The local analytic branches of the non-abelian Hodge
locus are the isomonodromic deformations of a fixed integral
representation which underlie a $\Z$-PVHS.
The fibers of $\cY\rightarrow \cS$ along a branch
admit a period map $\Phi_s\colon Y_s\to \Gamma\backslash \bD$.
The images $\Phi_s(Y_s)$ of such period maps are 
closed analytic spaces,
tangent to the Griffiths distribution on $\Gamma\backslash \bD$,
of bounded volume with respect to the Griffiths line bundle.

When $\Gamma\backslash \bD$ is compact, we prove that such period
images are parameterized by a product of a compact Moishezon space
and a sub-period domain of $\bD$ accounting for the factors where the
monodromy representation is finite. We identify the non-abelian Hodge
locus as a relative space of maps of bounded degree from $\cY/\cS$ to the universal family over the Moishezon space.

Then Q2 follows for period maps with a fixed target $\Gamma\backslash \bD$.
The set of such arithmetic quotients $\Gamma\backslash \bD$ which
can appear is bounded using the resolution of Q1. Theorem \ref{main} follows.

\subsection{Organization of the paper}
In \S 2 we recall some background results on polarized pure variations of Hodge structures and period domains. 
In \S 3, we prove
the relative version of Deligne's finiteness theorem, for
representations with $\Q$-anisotropic monodromy.
Then in \S 4, we introduce
the Douady and Barlet spaces in the general context of polarized
distribution manifolds and prove their key properties. In \S 5,
we prove algebraicity of the $\Q$-anisotropic non-abelian Hodge locus. 

\subsection{Acknowledgements}
The first author thanks P. Smillie for suggesting a
proof of Proposition \ref{key-lemma}, R. Krishnamoorthy for many helpful discussions, and B. Bakker, B. Klingler, and D. Litt for their
insights.
The second author thanks A. Landesman for bringing this question
to his attention and for useful conversations, and D. Maulik, Y.-T. Siu, and N. Tholozan for useful conversations. We thank also the anonymous referee for helpful comments and suggestions.

The first author was supported by NSF grant DMS-2201221. The second author was supported by NSF grant DMS-2302388.

\section{Variations of Hodge structures}\label{hodge-theory}

We recall in this section some background results on 
polarized variations of pure Hodge structures and we fix 
notations. All variations of Hodge structures in this paper are pure and our main references are 
\cite{green-griffiths-kerr,klingler-survey}, see also \cite{griffithsI,griffithsIII}. 

\subsection{Monodromy and Mumford-Tate group}
Let $Y$ be a complex manifold and let 
$\bV:=(V_\Z,F^\bullet,\psi)$ be a polarized variation 
of pure Hodge structure of weight $k$ on $Y$. Here $V_\Z$
is the $\Z$-local system, $F^\bullet$ is the Hodge filtration
on $V_\Z\otimes \cO_Y$, and $\psi$ is the polarization.
Let $\bfG$ be 
the \emph{generic Mumford-Tate} group of the variation 
and let $\bfH$ be the algebraic monodromy group of $\bV$. 

We recall that $\bfG$ is the Mumford--Tate group of the Hodge
structure over a very general point of $Y$ and 
$\bfH$ is defined as follows: 
fix a base point $*\in Y$ and denote the monodromy 
representation associated to the local system $V_\Z$ by 
$\rho\colon \pi_1(Y,*)\to \GL(V_{\Z,*}),$ which lands in 
the subgroup $\Sp(V_{\Z,*})$ or ${\rm O}(V_{\Z,*})$ 
depending on the parity of the weight. Then $\bfH$ is the 
identity component of the $\Q$-Zariski closure of the image 
of $\rho$. The groups $\bfG$ and $\bfH$ are reductive 
algebraic groups over $\Q$ and by a classical theorem of 
Deligne \cite[Section 4]{hodge2} and Andr\'e  \cite[Theorem 1]{Andre-Mumford-Tate}, $\bfH$ is a normal subgroup of 
$\bfG^{\rm der}$, the derived group of $\bfG$. It follows 
that we have a decomposition over $\Q$ of the adjoint groups 
$\bfG^{\rm ad}=\bfH^{\rm ad}\times \bfH'$.    

Let $\bD$ be the \emph{Mumford-Tate domain} associated to the 
variation. It is a complex analytic space, homogeneous for 
$G:=\bfG^{\rm ad}(\R)^+$ and it can be identified with a 
quotient $G/U$ where $U\subset G$ is a compact 
subgroup. 

In terms of Hodge structures, the variation of Hodge structure $\bV$ induces, by restriction to a point $s\in S$, a pure Hodge structure. Therefore we have a decomposition $\bV_{\Z,s}\otimes_{\Z} \C=\bigoplus_{p+q=k}V^{p,q}_s$, where $\overline{V^{p,q}_s}=V^{q,p}_s$. Then $U$ is the real subgroup preserving each $V^{p,q}_*$ and the Hodge 
pairing between $V^{p,q}_*$ and $V^{q,p}_*$.
From the theory of symmetric spaces, $\bD$ is an analytic 
open subset of the {\it compact dual} $\bD^\vee$, a 
projective subvariety of a symplectic or an orthogonal flag 
variety with specified Mumford-Tate group. There exists then a parabolic subgroup $P\subset G_\C$ such that
$\bD^\vee = G_\C/P$ and $P\cap G = U$.
\medskip

The variation of Hodge structure $\bV$ on $Y$ is 
completely described by its holomorphic \emph{period map}: 
\[\Phi:Y\rightarrow \Gamma\backslash \bD,\]
where $\Gamma\subset \bfG(\Z)$ is a finite index subgroup 
preserving $V_\Z$ such that the monodromy representation 
factors through $\Gamma$. Up to taking a finite \'etale cover 
of $Y$, we can assume that $\Gamma$ is neat, hence acting 
freely on $\bD$. Then the quotient $X_\Gamma:=\Gamma\backslash \bD$ is a 
connected complex manifold, called a \emph{connected Hodge manifold}, see 
\cite[Definiton 3.18]{klingler-survey}. It is the 
classifying space of polarized $\Z$-Hodge
structures on  $V_\Z$ whose generic Mumford-Tate group is 
contained in $\bfG$, with level structure corresponding
to $\Gamma$.

In general, $X_\Gamma$ does not admit the structure
of an algebraic variety unless 
$\bD$ fibers holomorphically or anti-holomorphically over a Hermitian symmetric domain \cite{griffiths-robles}. In that case,
$X_\Gamma$ is in fact quasiprojective by 
the Baily-Borel theorem \cite{bailyborel}, and $\Phi$ is 
algebraic by the Borel hyperbolicity theorem 
\cite{borelmetric}, see also 
\cite{bakker-brunebarbe-tsimerman} for another proof.

We can furthermore refine the period map by taking into 
account the algebraic monodromy group $\bfH$. The Mumford--Tate domain $\bD$ decomposes according to the 
decomposition 
$\bfG^{\rm ad}=\bfH^{\rm ad}\times \bfH'$ of adjoint groups as 
$\bD=\bD_H\times \bD_{H'}$ where $\bD_H$ is an 
$H:=\bfH^{\rm ad}(\R)^{+}$-homogeneous space. Up to a finite 
\'etale cover of $Y$, we can assume that the lattice $\Gamma$ 
decomposes as $\Gamma=\Gamma_{H}\times \Gamma_{H'}$ 
where $\Gamma_{H}\subset \bfH(\Z)$ and 
$\Gamma_{H'}\subset \bfH'(\Z)$ are arithmetic subgroups. 
Then the projection of the period map $\Phi$ is constant on 
the second factor and hence the period map takes the 
following shape: 
\[\Phi:S\rightarrow \Gamma_H\backslash \bD_H\times 
\{t_Y\}\hookrightarrow \Gamma\backslash \bD,\]
where $t_Y$ is a Hodge generic point in $\bD_{H'}$. 
So $X_{\Gamma_H}\times \bD_{H'}$ serves as a
classifying space of $\Z$-PVHS on a lattice isometric to $V_{\Z,*}$ whose generic Mumford-Tate group is contained in $\bfG$, and whose monodromy factors through $\Gamma_H$. The classifying map for
such a variation factors through the inclusion of $X_{\Gamma_H}\times \{t\}$ for some fixed $t$.

\subsection{Automorphic vector bundles}
We refer to \cite[Section 12.1]{carlson} for more details on this section. Given any complex linear representation of
$\chi\colon U\rightarrow \mathrm{GL}(W)$, there is an associated 
holomorphic vector bundle $G\times_U W\rightarrow \bD$
which is $\Gamma$-equivariant and hence descends to a holomorphic vector bundle over $X_\Gamma$. In particular, for any $p$, the natural representation of $U$ on $V^{p,q}_*$ defines a holomorphic vector bundle on $\bD$ which is identified to the $p{\rm th}$ graded piece $F^p/F^{p+1}$ of the Hodge filtration. 

Any character $\chi\colon U\to \mathbb{S}^1$ defines an equivariant 
holomorphic line bundle $L_\chi\rightarrow \bD$. For example,
if the character $\chi$ is the determinant of the action of
$U$ on $V^{p,q}_*$, we get the line bundle
$L_p=\det(F^p/F^{p+1}).$ Any such equivariant line
bundle admits a unique (up to scaling) left
$G$-invariant hermitian metric 
$$h\colon L_\chi\otimes \oL_{\chi}\to \C.$$

\begin{definition} The {\it Griffiths bundle}
$L\to X_\Gamma$ is defined by 
$$\textstyle L:=\bigotimes_{p\geq 0} (L_p)^{\otimes p}.$$ \end{definition}

We denote the descent
to $X_\Gamma$ of the equivariant vector bundles $F^p$, line bundles 
$L_p$, and the hermitian metrics $h$ by the same symbols.

\begin{remark} While $F^\bullet$ defines a filtration 
of holomorphic vector bundles over $X_\Gamma$, it does 
not, in general, define a $\Z$-PVHS over $X_\Gamma$
for the tautological local system, as Griffiths transversality condition fails. \end{remark}

Recall that the tangent space to the Grassmannian 
at a subspace $W\subset V$ is canonically isomorphic
to $\Hom(W,V/W)$. Since $\bD$ is an open subset of a
flag variety $\bD^\vee$, we have an inclusion
$$\textstyle T\bD\subset \bigoplus_p \Hom(F^p, V/F^p).$$

The Griffiths transversality condition on a $\Z$-PVHS
over $Y$ implies that the differential $d\Phi$ of the period
map lands in an appropriate subspace of the tangent space:

\begin{definition} The {\it Griffiths horizontal distribution}
$\Xi\subset T\bD$ is the  holomophic subbundle of the tangent
bundle defined by $$\Xi_{F^\bullet}:=T_{F^\bullet}\bD \cap 
\textstyle \bigoplus_p \Hom(F^p,F^{p-1}/F^p).$$ It is 
$G$-invariant, and so descends to a distribution in 
$TX_\Gamma$ which we also denote by $\Xi$. \end{definition}

The following proposition is 
\cite[Prop.~7.15]{griffithsIII}.

\begin{proposition} Let $\omega_L:=\frac{i}{2\pi}\partial 
\overline{\partial} \log h\in \Lambda^{1,1}(X_\Gamma, \R)$ 
be the curvature form of the Hermitian metric $h$ on $L$. 
Then $\omega_L\big{|}_{\Xi}$ is positive definite, in the sense 
that for any nonzero $v\in \Xi_\R$, 
$$\omega_L(v,Jv)>0.$$\end{proposition}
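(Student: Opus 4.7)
The plan is to reduce the positivity statement to a pointwise computation at a reference Hodge flag, using the $G$-invariance of both $h$ and $T^{||}$, and then extract $\omega_L$ on the Griffiths distribution via a telescoping of the curvatures of the individual Hodge line bundles $L_p$.

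First, since $G$ acts transitively on $\bD$ and preserves both $h$ and $T^{||}$, the restriction $\omega_L\big|_{T^{||}}$ is $G$-invariant, so it suffices to check positive definiteness at a single reference flag $F_0^\bullet \in \bD$. At that point, a tangent vector $v \in T_{F_0^\bullet}\bD$ is encoded by a tuple $(\phi_p)_p$ of maps $\phi_p\colon H^{p,q} \to V/F^p$ satisfying the infinitesimal polarization constraint, and $v \in T^{||}_{F_0^\bullet}$ exactly when each $\phi_p$ factors through the inclusion $H^{p-1,q+1} \hookrightarrow V/F^p$.

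The core computation is the curvature of each $L_p = \det(H^{p,q})$. Using the $C^\infty$ orthogonal Hodge decomposition $V = \bigoplus_p H^{p,q}$, I would write the flat Gauss--Manin connection as $\nabla = D + \sigma + \sigma^*$, where $D$ is the block-diagonal unitary connection, $\sigma\colon H^{p,q}\to H^{p-1,q+1}\otimes\Omega^{1,0}$ is the Higgs field, and $\sigma^*$ its $h$-adjoint. Decomposing $\nabla^2=0$ by Hodge type and taking the block-diagonal component gives $F_{H^{p,q}} = -\sigma^*\wedge\sigma - \sigma\wedge\sigma^*$ on each summand; evaluating on $(v, Jv)$ and tracing then yields
$$c_1(L_p)(v,Jv) \;=\; \kappa\bigl(\|\sigma_p(v)\|^2 - \|\sigma_{p+1}(v)\|^2\bigr)$$
for a universal positive constant $\kappa$, where $\sigma_p(v)\colon H^{p,q}\to H^{p-1,q+1}$ is the contraction of $\sigma$ with $v$ and the norm is Hilbert--Schmidt for the Hodge metric. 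Summing with the weighting $L = \bigotimes_p L_p^{\otimes p}$ and reindexing (using $\sigma_0 = \sigma_{k+1} = 0$) produces the telescoping
$$\omega_L(v,Jv) \;=\; \kappa\sum_p p\bigl(\|\sigma_p(v)\|^2 - \|\sigma_{p+1}(v)\|^2\bigr) \;=\; \kappa\sum_p \|\sigma_p(v)\|^2,$$
which is strictly positive for any nonzero $v \in T^{||}$, since the datum of such a $v$ is precisely the tuple $(\sigma_p(v))_p$, at least one component of which must be nonzero.

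The main obstacle will be the careful sign analysis in the individual curvature formula: the Hodge metric on $H^{p,q}$ carries the Weil factor $i^{p-q}$, so the contributions to $c_1(L_p)$ alternate in sign with the parity of $p$, and consequently no single $c_1(L_p)$ is positive on $T^{||}$. Only the specific weighting $L = \bigotimes_p L_p^{\otimes p}$ produces, after the telescoping, a uniformly positive quadratic form in the Higgs data; verifying that the cross-terms cancel correctly to leave $\sum_p \|\sigma_p(v)\|^2$ with an unambiguously positive coefficient is the technical heart of the argument.
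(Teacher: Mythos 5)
The paper itself does not prove this proposition: it cites \cite[Prop.~7.15]{griffithsIII} and moves on. So there is no ``paper's own proof'' to compare against; what you have written is, in effect, a reconstruction of Griffiths's classical computation, and it is essentially correct. The reduction to a single reference flag by $G$-invariance, the decomposition $\nabla = D + \sigma + \sigma^*$ with respect to the $C^\infty$ Hodge splitting, the resulting block-diagonal curvature $F = -(\sigma\wedge\sigma^* + \sigma^*\wedge\sigma)$, the trace identity $c_1(L_p)(v,Jv) = \kappa\bigl(\|\sigma_p(v)\|^2 - \|\sigma_{p+1}(v)\|^2\bigr)$, and the Abel-summation collapse $\sum_p p(a_p - a_{p+1}) = \sum_p a_p$ using $\sigma_0 = \sigma_{k+1} = 0$ are all the right ingredients, and the final observation that a nonzero $v\in T^{||}$ is \emph{precisely} a nonzero tuple $(\sigma_p(v))_p$ closes the argument.

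One caveat: your framing of the ``main obstacle'' in the last paragraph is not quite accurate. The Weil factor $i^{p-q}$ is what \emph{makes} the Hodge metric $h = i^{p-q}\psi(\cdot,\bar\cdot)$ positive definite on each $H^{p,q}$; once you compute with respect to $h$ (as you do), there is no alternating-with-parity sign pattern in $c_1(L_p)$. What actually prevents any single $c_1(L_p)$ from being positive on $T^{||}$ is simply that $\|\sigma_p(v)\|^2 - \|\sigma_{p+1}(v)\|^2$ can have either sign (and $c_1(L_0)$ is manifestly $\leq 0$, $c_1(L_k)$ is $\geq 0$ but has a kernel). The role of the weights $p$ in $L = \bigotimes_p L_p^{\otimes p}$ is exactly to make the telescope produce $\sum_p\|\sigma_p(v)\|^2$ with all coefficients equal to $1$; this is the correct ``technical heart,'' and you do carry it out correctly, so this is a misdiagnosis of the difficulty rather than a gap in the proof.
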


From this, Griffiths concluded that the image 
of $\Phi$ admits a holomorphic line bundle with positive 
curvature. In particular, using a generalization of the 
Kodaira embedding theorem due to Grauert, he proved, see 
\cite[Thm.~9.7]{griffithsIII}:

\begin{theorem} Let $\Phi\colon Y\to X_\Gamma$ be the 
period map of a $\Z$-PVHS on a compact, complex manifold $Y$. 
Then $\Phi(Y)$, with its reduced analytic space structure,
is a projective algebraic variety.\end{theorem}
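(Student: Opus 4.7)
The plan is to verify that the restriction of the Griffiths line bundle $L$ to the compact analytic image $Z := \Phi(Y) \subset X_\Gamma$ is ample, and then to invoke Grauert's extension of the Kodaira embedding theorem to conclude that $Z$ is projective algebraic. Recall that this criterion says that a reduced compact complex analytic space is projective as soon as it carries a line bundle whose Hermitian metric has strictly positive curvature form — equivalently, a line bundle whose total space admits a strictly pseudoconvex neighborhood of the zero section.

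First I would check that $Z = \Phi(Y)$ is a reduced compact analytic subspace of $X_\Gamma$: since $Y$ is compact, the map $\Phi$ is proper, so by Remmert's proper mapping theorem $Z$ is closed analytic in $X_\Gamma$, with the reduced structure induced from its set-theoretic image. Next I would verify that the restriction of the Hermitian metric $h$ to $L|_Z$ has strictly positive curvature at the smooth points of $Z$. Here Griffiths transversality is crucial: the differential of the period map factors through the Griffiths distribution, $d\Phi\colon TY \to \Phi^*T^{||}$. Letting $U \subset Y$ be the dense open locus where $\Phi$ has maximal rank, the map $\Phi|_U$ submerses onto a dense open subset of $Z^{\mathrm{sm}}$, and for every $y \in U$ we have $T_{\Phi(y)}Z = d\Phi_y(T_y Y) \subset T^{||}_{\Phi(y)}$. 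Since $T^{||}$ is a closed subbundle of $TX_\Gamma$, passing to the closure gives $TZ^{\mathrm{sm}} \subset T^{||}|_{Z^{\mathrm{sm}}}$. By the preceding proposition, $\omega_L$ is positive definite along $T^{||}$; hence $\omega_L|_{TZ^{\mathrm{sm}}}$ is a strictly positive $(1,1)$-form.

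The most delicate step is extending the positivity conclusion across the singular locus of $Z$ in order to apply Grauert's criterion in its standard form. One clean route is to pass to a resolution of singularities $\nu\colon \widetilde Z \to Z$: the pullback $\nu^*L$ carries a semi-positive Hermitian metric whose curvature is strictly positive on the dense open subset $\nu^{-1}(Z^{\mathrm{sm}})$, so $\nu^*L$ is big, $\widetilde Z$ is Moishezon, and a Stein factorization argument exploiting that $\nu$ contracts only exceptional loci lets one descend projectivity back to $Z$. Alternatively, one can appeal to the version of Grauert's theorem valid for reduced singular analytic spaces: the Hermitian metric $h$ and its positivity along $T^{||}$ are defined globally on $X_\Gamma$, and since the tangent cone to $Z$ at any singular point lies inside $T^{||}$ by continuity of the Griffiths distribution, the pseudoconvexity condition on a neighborhood of the zero section of $L|_Z$ is inherited from the ambient positivity. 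Either way, $L|_Z$ is ample and $Z$ is projective algebraic.
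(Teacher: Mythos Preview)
The paper does not give a self-contained proof of this theorem in situ; it attributes the result to Griffiths via Grauert's embedding theorem and then immediately flags a gap: ``It seems though, that some conditions of Grauert's theorem do not always hold. In particular, it may not be the case that $T\Phi(Y)\subset T^{||}$ due to singularities on $\Phi(Y)$.'' Your route (b) runs directly into this warning. The assertion that ``the tangent cone to $Z$ at any singular point lies inside $T^{||}$ by continuity of the Griffiths distribution'' is not justified: tangent cones at singular points are not in general limits of tangent spaces at nearby smooth points, and continuity of $T^{||}$ as a subbundle of $TX_\Gamma$ says nothing about how the (possibly non-reduced, non-linear) tangent cone of $Z$ sits inside it. This is exactly the step Griffiths' original argument glosses over and the paper singles out as problematic.

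Your route (a) is closer to the rigorous modern approach (Bakker--Brunebarbe--Tsimerman, and the paper's own later Theorem~\ref{griffiths-algebraic}), but the descent step is too vague to be a proof. Bigness of $\nu^*L$ on a resolution gives that $\widetilde Z$, hence $Z$, is Moishezon---but Moishezon alone is not projective, and ``a Stein factorization argument'' does not bridge that gap. What the paper actually does later (in the parallel Douady setting) is: (i) prove a careful lemma (Lemma~\ref{tan-strata}) that the tangent space of every stratum of the singular stratification of $Z$ lies in $T^{||}$, using resolutions and generic smoothness rather than a continuity hand-wave; (ii) deduce $L^d\cdot V>0$ for every subvariety $V\subset Z$, i.e.\ the Nakai--Moishezon inequalities; (iii) invoke Koll\'ar's theorem that a Moishezon space satisfying Nakai--Moishezon is projective. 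If you want route (a) to go through, you need to supply steps (i)--(iii) or an equivalent; as written, the descent of projectivity is the missing idea.
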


 It seems though, that some conditions of Grauert's
 theorem do not always hold.
 In particular, it may not be the case that we have an inclusion of Zariski tangent spaces
 $T\Phi(Y)\subset \Xi$ due to singularities on $\Phi(Y)$.
 An independent proof and strengthening to the 
 non-compact case was given in
 \cite[Thm.~1.1]{bakker-brunebarbe-tsimerman}.

\section{Boundedness of monodromy representations}

Let $\cS$ be a smooth connected quasi-projective complex algebraic variety and let $\pi:\cY\to \cS$ be a smooth projective morphism. Our goal in this section is to prove that there are only finitely representations $\pi_1(Y_0)\rightarrow 
\GL_n(\Z)$, up to conjugacy,
which underlie a $\Z$-PVHS with $\Q$-anisotropic monodromy on some fiber $Y_s$ of $\pi:\cY\rightarrow\cS$, after an identification $\pi_1(Y_0,*)\simeq\pi_1(Y_s,*)$ moving the base point
in the universal family.


Slicing $\cY$ by hyperplanes,
we can apply the Lefschetz theorem to reduce to the case
of a relative smooth projective curve $\cC\to \cS$ (passing to a finite Zariski
cover of $\cS$ if necessary). Then, we may
as well assume that $\cS=\cM_g$ and that $\cC=\cC_g$
is the universal curve. This is a particular instance of a question asked by Deligne, for representations with $\Q$-anisotropic monodromy, see \cite[Question 3.13]{deligne87}.
\medskip 

We can decompose $\cM_g$ into two subsets, the {\it thick}
part and the {\it thin} part. Let $C\in \cM_g$ be a Riemmann surface of genus $g$ and let $\gamma\in \pi_1(C)$ be a loop. Then $C$ has a unique hyperbolic metric of constant curvature $-1$, 
in the conformal equivalence class defined
by the complex structure on $C$. There is a unique
representative of the free homotopy class of $\gamma$
which is a hyperbolic geodesic for this metric. Let $\ell_C(\gamma)$ denote its hyperbolic length. Then, the thick part of $\cM_g$
is a compact subset
$\cM_g^{\geq \epsilon}\subset \cM_g$ consisting
of all curves $C\in \mathcal{M}_g$, for which $\ell_C(\gamma)\geq \epsilon$
for all $\gamma\in \pi_1(C)$, see \cite[Cor.~3]{mumford-thick}.

First, we deal with the thick part. The proof follows,
nearly verbatim, Deligne's proof \cite{deligne87}
of finiteness of monodromy representations underlying
$\Z$-PVHS on a fixed curve $C$. 

\begin{definition}\label{identification}
  Let $\Pi_g$ be the surface group:
  $$\Pi_g = \langle \alpha_1,\beta_1,\dots,\alpha_g,\beta_g \,\big{|}\,\textstyle \prod_{i=1}^g\alpha_i\beta_i\alpha_i^{-1}\beta_i^{-1}=1\rangle.$$
  Fix a pointed Riemann surface $(C_0,*_0)\in \cM_{g,1}=\cC_g$ of genus $g$ and an isomorphism $\pi_1(C_0,*_0)\simeq \Pi_g$.
Then a path in $\cC_g$ connecting $(C_0,*_0)$ to $(C,*)$
produces an identification $$\pi_1(C,*)\simeq \pi_1(C_0,*_0)\simeq \Pi_g.$$ We call such an
identification \emph{admissible}.
  \end{definition}

 Two such admissible
identifications can be compared by an 
automorphism of $\Pi_g$ induced by a path
from $(C_0,*_0)$ to itself, i.e., an element
of $\pi_1(\cC_g, (C_0,*_0))$.
The paths connecting $(C_0,*_0)$ to itself
keeping $C_0\in \cM_g$ constant in moduli
induce the inner automorphisms ${\rm Inn}(\Pi_g)$. 
The paths connecting  $(C_0,*_0)$ to itself by moving 
$C_0\in \cM_g$ in moduli induce an inclusion of
the mapping class group ${\rm Mod}_g\subset {\rm Out}(\Pi_g)$
as an index $2$ subgroup of the outer automorphism group ${\rm Out}(\Pi_g)$, corresponding
to orientation, see \cite[Theorem 8.1]{farb-margalit}. So any isomorphism 
$\pi_1(C,*)\simeq \Pi_g$ induced by an oriented
homeomorphism $(C,*)\to (C_0,*_0)$ is admissible.

\begin{proposition}\label{deligne-thick} Let 
$\rho\colon \pi_1(C,*)\to \GL_n(\Z)$ be the 
monodromy representation of a $\Z$-PVHS of rank $n$
on some $C\in \cM_g^{\geq \epsilon}$ in the thick part of
the moduli space. There is an admissible identification $\pi_1(C,*)\simeq \Pi_g$ identifying $\rho$ with
one of a finite list of representations $\Pi_g\to \mathrm{GL}_n(\Z)$, up to conjugacy. \end{proposition}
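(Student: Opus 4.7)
The plan is to adapt Deligne's finiteness proof \cite{deligne87} (for fixed $C$) uniformly over the thick part, using compactness of $\cM_g^{\geq \epsilon}$ and Bers' theorem to produce a bounded-length generating set of $\pi_1(C,*)$, and then applying the length-contracting property of the period map.

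First, I would invoke Bers' theorem: there is a constant $L_g$ such that every $C\in \cM_g$ admits a pants decomposition all of whose curves have hyperbolic length at most $L_g$. For $C$ in the thick part these pants curves additionally satisfy $\ell_i \geq \epsilon$. Since $\mathrm{Mod}_g$ acts on topological pants decompositions of $\Pi_g$ with only finitely many orbits, after composing the admissible identification $\pi_1(C,*)\simeq \Pi_g$ with a mapping class I may assume the Bers decomposition corresponds to a fixed topological pants decomposition $\mathcal{P}_0=\{c_1,\ldots,c_{3g-3}\}$ of $\Pi_g$. Dehn twists along the $c_i$ lie in $\mathrm{Mod}_g$ and shift the Fenchel--Nielsen twist coordinates by $\tau_i\mapsto \tau_i+\ell_i$, so after a further adjustment the coordinates $(\ell_i,\tau_i)$ lie in the compact region $\prod_i[\epsilon,L_g]\times[0,L_g]$ of Teichm\"uller space.

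Next, fix standard generators $\gamma_1,\ldots,\gamma_N\in \Pi_g$ consisting of the curves $c_i$ together with transverse dual curves $d_j$. Their hyperbolic lengths in $C$ depend continuously on the Fenchel--Nielsen coordinates, hence are uniformly bounded by some $M=M(g,\epsilon)$ on the above compact region. By Griffiths' length-decreasing property of the period map $\Phi\colon C\to \Gamma\backslash \bD$, one has
\[
d_\bD(x_0,\rho(\gamma_j)\cdot x_0)\leq \ell_C(\gamma_j)\leq M
\]
for each $j$, where $x_0=\wt\Phi(\wt{*})\in \bD$ is a lift of the basepoint under the universal cover map.

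At this point Deligne's argument applies essentially verbatim: there are only finitely many classifying period domains $\bD$ for integral PVHS of rank $n$ (indexed by Hodge numbers), and on each of them the integral symplectic or orthogonal group acts properly discontinuously. Conjugating $\rho$ by an integer matrix to bring $x_0$ into a reference fundamental region and then invoking properness, each $\rho(\gamma_j)$ takes only finitely many values. Since there are only finitely many generators $\gamma_j$, finiteness of $\rho$ up to conjugacy follows. The main subtlety I anticipate is the adjustment by Dehn twists in the first step: without absorbing the twist into the marking, the dual curves $d_j$ would have unbounded hyperbolic length as $\tau_i\to\infty$ even inside the thick part, and the argument would collapse. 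Fortunately Dehn twists along $\mathcal{P}_0$ do lie in $\mathrm{Mod}_g$, so this passage to a compact parameter region is consistent with modifying the admissible identification as allowed.
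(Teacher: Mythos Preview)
Your reduction to a bounded-length generating set is correct and interesting, though more elaborate than needed: the paper simply uses Mumford's compactness of $\cM_g^{\geq \epsilon}$, covers it by finitely many compact contractible sets $V_i$, and observes that any fixed finite subset of $\Pi_g$ has bounded hyperbolic length over each $V_i$. Your route via Bers, Fenchel--Nielsen coordinates, and Dehn twist normalization achieves the same thing by hand, and your observation that the Dehn twist must be absorbed into the marking (else the dual curves have unbounded length) is exactly right.

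The genuine gap is in your final step. From $d_\bD(x_0,\rho(\gamma_j)x_0)\leq M$ you want to conclude that $\rho(\gamma_j)$ lies in a finite set after conjugating $x_0$ into a fundamental domain $F$. But $F$ is \emph{not compact} for the integral symplectic or orthogonal group acting on $\bD$: proper discontinuity only tells you that $\{\gamma:d(x_0,\gamma x_0)\leq M\}$ is finite for each fixed $x_0$, not uniformly over $x_0\in F$. Concretely, for $\SL_2(\Z)$ acting on $\bH$ with $x_0=iy$ high in the cusp, the unipotents $\left(\begin{smallmatrix}1&n\\0&1\end{smallmatrix}\right)$ with $|n|\leq My$ all satisfy the distance bound, and there is no a priori control on where the period image $x_0$ lands. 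Since the proposition as stated does not assume compact type, this really bites.

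The paper (following Deligne) sidesteps this by passing to a conjugation-invariant quantity: Lemma~\ref{translation-bound} shows that $d_\bD(x,\gamma x)\leq M$ bounds $|\mathrm{tr}(\gamma)|$ in terms of $M$ alone, independently of $x$. Then Procesi's theorem says a semisimple representation is determined up to conjugacy by the traces on a fixed finite subset of $\Pi_g$; these traces are bounded integers, hence finite in number. Replacing your fundamental-domain step with this trace argument closes the gap, and in fact you can keep your bounded-length generators (enlarging the set to Procesi's if needed) and simply swap the endgame.
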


\begin{proof}
A theorem of Procesi \cite{procesi}
states that, up to conjugacy, a semisimple representation
$\rho\colon \Pi\to \GL_n(\C)$
from any finitely generated group $\Pi$
is uniquely determined by the function
\begin{align*} \{1,\dots,m\}&\to \C \\ 
j&\mapsto {\rm tr}(\rho(\delta_j))
\end{align*}
for some finite generating set 
$(\delta_j)_{1\leq j\leq m}$
of the group, where $m$ depends only on $\Pi$ 
and $n$. 

Choose, for once and all, such a generating
set $\delta_1,\dots,\delta_m$ for the surface group
$\Pi_g$. We call this set the
{\it Procesi generators}.
Deligne's argument relies on the famous length-contracting property of period maps,
due to Griffiths \cite[10.1]{griffithsIII}:

\begin{theorem}\label{length-contracting} There is a $G$-invariant metric on $\bD=G/U$ for which any holomorphic, 
Griffiths transverse map $\Delta\to \bD$ from a holomorphic disk 
is length-contracting for the hyperbolic metric on $\Delta$.
\end{theorem}

Choose a cover of $\cM_g^{\geq \epsilon}$ by a finite number
of contractible, compact subsets $\{V_i\}_{i\in I}$.
Choosing a base-point consistently over $V_i$,
the fundamental groups $\pi_1(C,*)$
for all $C\in V_i$ are uniquely identified, by the
contractibility of $V_i$. Let $\pi_1(C,*)\simeq \Pi_g$
be an admissible identification, and consider
the resulting family of Procesi generators $(\delta_j)_{1\leq j\leq m}$
of $\pi_1(C,*)$ for $C\in V_i$. Then $\ell_C(\delta_j)$
is a continuous function on $V_i$ which, by compactness,
is bounded. Hence there exists some $M$ for which
$\ell_C(\delta_j)\leq M$ for all $1\leq j\leq m$ and all $C\in V_i$.

Suppose that $\rho\colon \pi_1(C,*)\to \Gamma$ is the monodromy
representation of a $\Z$-PVHS for some $C\in V_i$.
Then, applying Theorem \ref{length-contracting} to the
hyperbolic uniformization $\Delta\to C$,
we conclude that there exists a point $x\in \bD$
for which $d_{\bD}(x,\rho(\delta_j)\cdot x)\leq M$. In particular,
$x$ may be taken as the period image of some
point on the lift to $\Delta$ of the hyperbolic geodesic representing $\delta_j$.
Thus, $\rho(\delta_j)$ has bounded translation length,
and thus, bounded trace, by Lemma
\ref{translation-bound}. See \cite[Corollaire 1.9]{deligne87}.

\begin{lemma}\label{translation-bound}
Let $g\in G$ and suppose that $d_\bD(x,g\cdot x)\leq M$
for some $x\in \bD$. Then $|{\rm tr}(g)|\leq N$, for some $N$ depending only on $\bD$ and $M$. 
\end{lemma}

\begin{proof} Fix a base point $x_0\in \bD$ and choose some $h\in G$
for which $h\cdot x_0 = x$. Then
$$d_\bD(x,g\cdot x) = d_\bD(h\cdot x_0, gh\cdot x_0) = 
d_\bD(x_0,h^{-1}gh\cdot x_0)\leq M.$$
Since the closed ball of radius $M$ around $x_0$ is compact,
and the map $G\to G/U=\bD$ has compact fibers,
we conclude that the set $$\{k\in G\,\big{|}\,
d_\bD(x_0,k\cdot x_0)\leq M\}$$ is compact. As the trace
is a continuous function, we conclude that ${\rm tr}$ is bounded
on the above set, in terms of $M$ alone. We conclude that
${\rm tr}(h^{-1}gh)={\rm tr}(g)$ is bounded.
\end{proof}

Hence the trace ${\rm tr}(\rho(\delta_j))$ is
bounded in terms of $\ell_C(\delta_j)\leq M$,
and hence it is bounded globally
on $V_i$ by some integer $N$. It is furthermore an integer,
as $\rho$ lands in $\GL_n(\Z)$. Since there are only
finitely many possibilities for a map
$\{1,\dots,m\}\to \{-N,\dots,N\}$,
there are only finitely many monodromy representations
achieved for a $\Z$-PVHS over any $C\in V_i$.
Since the indexing set $I$ is finite,
we conclude the same over $\cM_g^{\geq \epsilon}$, up to conjugacy.
\end{proof}

Thus, it remains to consider the thin part of the moduli
space $\cM_g^{<\epsilon}$ consisting of smooth curves with systole less than $\epsilon$.

\begin{definition}\label{collar-def}
A {\it collar} $A$ is the
Riemann surface with boundary 
$$\left.\left\{
re^{i\theta}\in \bH
\left|
\begin{array}{l}
			1\leq r \leq r_0 \\
			\theta_0\leq \theta \leq \pi-\theta_0
\end{array}
\right.\!\!
\right\}\right/\sim$$
where $\tau\sim r_0\tau$.
A {\it half-collar}
is the subregion where $\theta\leq \tfrac{\pi}{2}$. 
\end{definition}

A collar admits a Riemannian metric of constant curvature $-1$ induced by the Poincar\'e metric on $\mathbb{H}$. We recall a famous result
due to Keen \cite{keen}. The sharpness is due
to Buser \cite[Thm.~C]{buser}.

\begin{lemma}[Collar Lemma]\label{universal-collar}
Every simple closed geodesic $\gamma$
of length $\ell$ on a complete
hyperbolic surface $C$ is contained in a
hyperbolic collar $A_\gamma\subset C$ of transverse
length $\ln \big(\frac{e^{\ell/2}+1}{e^{\ell/2}-1}\big)$.
Furthermore, any two such collars associated
to disjoint geodesics are disjoint. \end{lemma}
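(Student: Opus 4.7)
My approach is to work in the universal cover $\bH$ of $C$, where $\pi_1(C)$ acts by a torsion-free discrete subgroup of $\mathrm{PSL}(2,\R)$, and to combine a classical hyperbolic trace identity with discreteness. Simplicity of $\gamma$ implies that any two distinct lifts of $\gamma$ are disjoint bi-infinite geodesics in $\bH$: if they crossed at $p\in\bH$, the two distinct tangent directions at $p$ would project to two tangent directions of $\gamma$ at the image of $p$, giving a self-intersection. Fix a lift $\tilde\gamma$, and let $g\in\pi_1(C)$ be the primitive hyperbolic element stabilizing it, with translation length $\ell$. The claimed collar is the image in $C$ of a tubular neighborhood of $\tilde\gamma/\langle g\rangle$ in the intermediate annular cover $\bH/\langle g\rangle$; it embeds in $C$ if and only if, for every $h\in\pi_1(C)\setminus\langle g\rangle$, the perpendicular distance $D:=d_{\bH}(\tilde\gamma, h\tilde\gamma)$ is at least the full width of the tube.

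To bound $D$ from below, I would set $g':=hgh^{-1}$, which is hyperbolic with axis $h\tilde\gamma$ and the same translation length $\ell$, and apply the standard identity
\[
|\mathrm{tr}(gg'^{-1})| \;=\; 2\,\bigl|\cosh^2(\ell/2)-\sinh^2(\ell/2)\cosh(D)\bigr|,
\]
obtained by normalizing the common perpendicular between the two axes to the imaginary axis of $\bH$ and computing directly in $\mathrm{SL}(2,\R)$. Since the axes are distinct, $gg'^{-1}\neq 1$; combined with discreteness and torsion-freeness of $\pi_1(C)$, this forces $|\mathrm{tr}(gg'^{-1})|\geq 2$. The positive branch of the resulting inequality gives $\cosh(D)\leq 1$, vacuous for $D>0$; the negative branch simplifies via $\cosh^2-\sinh^2=1$ and $\cosh D - 1 = 2\sinh^2(D/2)$ to
\[
\sinh(D/2)\sinh(\ell/2)\geq 1,
\]
equivalently $D\geq 2w$ with $w=\sinh^{-1}(1/\sinh(\ell/2)) = \ln\bigl(\tfrac{e^{\ell/2}+1}{e^{\ell/2}-1}\bigr)$. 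Thus the $w$-neighborhoods of distinct lifts of $\gamma$ are disjoint, and the collar of the stated transverse length embeds.

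For the disjointness of collars $A_{\gamma_1}, A_{\gamma_2}$ of two disjoint simple closed geodesics of lengths $\ell_1,\ell_2$, I would apply the same identity to primitive stabilizers $g_1,g_2$ of chosen (automatically disjoint) lifts $\tilde\gamma_1, \tilde\gamma_2$. The analogous computation yields
\[
\sinh(\ell_1/2)\sinh(\ell_2/2)\cosh(D)\;\geq\;\cosh(\ell_1/2)\cosh(\ell_2/2)+1.
\]
With collar widths $w_i$ satisfying $\sinh(w_i)\sinh(\ell_i/2)=1$, the hyperbolic addition formula $\cosh(w_1+w_2)=\cosh w_1\cosh w_2+\sinh w_1\sinh w_2$ shows the right-hand side equals $\sinh(\ell_1/2)\sinh(\ell_2/2)\cosh(w_1+w_2)$, hence $D\geq w_1+w_2$, and the collars are disjoint.

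The main technical obstacle is the trace identity itself: its orientation-dependent signs need careful bookkeeping in $\mathrm{SL}(2,\R)$, although the computation is entirely classical. The degenerate possibilities $gg'^{-1}=1$ and $g_1 g_2^{-1}=1$ are immediately excluded, since they would force coincident axes, contradicting either $h\notin\langle g\rangle$ or $\gamma_1\neq\gamma_2$.
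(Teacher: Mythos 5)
The paper does not prove this lemma; it is cited to Keen and Buser, so I will assess your argument on its own terms. Your embeddedness argument is correct and in fact sharper than the statement: the trace computation $\mathrm{tr}(g\,(hgh^{-1})^{-1}) = 2\cosh^2(\ell/2) - 2\sinh^2(\ell/2)\cosh D$ is exact (and, since it is a commutator, independent of the $\mathrm{SL}_2$ lift of $g$), the positive branch forces $\cosh D \le 1$ hence $D=0$, which is excluded because $h\notin\langle g\rangle$ implies $h\tilde\gamma\neq\tilde\gamma$, and the negative branch gives $\sinh(D/2)\sinh(\ell/2)\ge 1$. This yields an embedded tube of \emph{half}-width $w = \mathrm{arcsinh}(1/\sinh(\ell/2)) = \ln\bigl(\tfrac{e^{\ell/2}+1}{e^{\ell/2}-1}\bigr)$, i.e.\ Buser's sharp collar. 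The paper's collar, as pinned down by its formula $\theta_0 = \cos^{-1}(e^{-\ell/2})$, has \emph{full} transverse width equal to $w$ (half-width $w/2$), so your result is strictly stronger, with a factor of two to spare.

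The disjointness step, however, has a genuine gap, and it is not merely ``sign bookkeeping.'' For two independent primitive elements $g_1,g_2$ the identity reads $\mathrm{tr}(g_1g_2^{\mp 1}) = 2\cosh(\ell_1/2)\cosh(\ell_2/2) \mp 2\sinh(\ell_1/2)\sinh(\ell_2/2)\cosh D$, with the sign controlled by the relative translation directions and by the choice of $\mathrm{SL}_2$ lifts. Choosing orientations so that the useful $-$ sign appears, $|\mathrm{tr}|\ge 2$ only gives $|\cosh(\ell_1/2)\cosh(\ell_2/2) - \sinh(\ell_1/2)\sinh(\ell_2/2)\cosh D|\ge 1$, and the ``wrong'' branch $\cosh(\ell_1/2)\cosh(\ell_2/2) - \sinh(\ell_1/2)\sinh(\ell_2/2)\cosh D \ge 1$ is \emph{not} vacuous when $\ell_1\neq\ell_2$: it permits $\cosh D \le \frac{\cosh(\ell_1/2)\cosh(\ell_2/2)-1}{\sinh(\ell_1/2)\sinh(\ell_2/2)}$, a nontrivial range since $\cosh(\ell_1/2)\cosh(\ell_2/2)-\sinh(\ell_1/2)\sinh(\ell_2/2) = \cosh\bigl(\tfrac{\ell_1-\ell_2}{2}\bigr)\ge 1$. (In your first part this branch collapsed to $D=0$ precisely because $\ell_1=\ell_2$.) To rule it out you need a topological input that fixes the sign of the relevant trace: for instance, that since $\gamma_1,\gamma_2$ are disjoint simple closed geodesics, the subgroup $\langle g_1,g_2\rangle$ uniformizes a three-holed sphere, so that the appropriate product $g_1g_2^{\pm1}$ is a peripheral element, giving the right-angled-hexagon identity $\sinh(\ell_1/2)\sinh(\ell_2/2)\cosh d_{12} = \cosh(\ell_1/2)\cosh(\ell_2/2) + \cosh(\ell_3/2) \ge \cosh(\ell_1/2)\cosh(\ell_2/2)+1$, together with the fact that the ambient distance $D$ is at least $d_{12}$. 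This is exactly how Buser proves disjointness, via a pants decomposition, and it is the content that is missing from ``the analogous computation.''
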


The function $$F(\ell):=\ln\left(\frac{e^{\ell/2}+1}{e^{\ell/2}-1}\right)$$ satisfies
$\lim_{\ell\to 0^+} F(\ell)=+\infty$, and is
monotonically decreasing towards zero as $\ell\to +\infty$.
In terms of the constants $r_0,\theta_0$ of 
Definition \ref{collar-def}, we have $r_0 = e^\ell$ and $\theta_0=\cos^{-1}(e^{-\ell/2})$.
The perimeter of
a boundary component of this collar is $\ell(1-e^{-\ell})^{-1/2}.$
More generally, the formula for the perimeter of a collar is ${\rm Per}(A)= \ell\csc(\theta_0).$

For $C\in \cM_g^{<\epsilon}$, let
$\{\gamma_1,\dots,\gamma_k\}$ be the non-empty set of simple closed curves
of hyperbolic length less than $\epsilon$. Choosing $\epsilon$
smaller than the fixed point of the function $F(\ell)$,
we conclude that all such curves are disjoint. 
So $k\leq 3g-3$, with equality when
$\{\gamma_1,\dots,\gamma_k\}$ form a pair-of-pants
decomposition of $C$.

We now recall the result of Bers \cite{bers1, bers2}:

\begin{theorem}\label{pair-of-pants} There exists a constant $B_g$
for which any hyperbolic surface of genus $g$ admits a
pair-of-pants decomposition, all of whose curves have
length bounded above by $B_g$.
\end{theorem}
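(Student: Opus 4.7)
The plan is to prove Bers' theorem by induction on topological complexity, at each stage invoking a key auxiliary lemma: for every topological type $\Sigma$ of compact oriented surface with boundary satisfying $\chi(\Sigma)<0$, there is a constant $L_\Sigma$ such that any hyperbolic metric on $\Sigma$ with totally geodesic boundary (of arbitrary lengths) admits an essential, non-peripheral simple closed geodesic of length at most $L_\Sigma$.

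I would prove this auxiliary lemma by an area-packing argument. By Gauss-Bonnet, the hyperbolic area of any such $\Sigma$ equals $2\pi|\chi(\Sigma)|$, a fixed constant depending only on topology. Suppose, toward contradiction, that every essential non-peripheral simple closed geodesic on $\Sigma$ has length greater than some threshold $L$. At any interior point $p$ whose distance to the geodesic boundary exceeds $L/2$, the metric ball $B(p,L/2)$ must be embedded: any non-trivial loop based at $p$ of length less than $L$ freely homotopes either to an essential simple closed geodesic (excluded by hypothesis) or to a boundary component (excluded by the distance condition combined with Lemma \ref{universal-collar}). A hyperbolic ball of radius $L/2$ has area $2\pi(\cosh(L/2)-1)$, which exceeds $2\pi|\chi(\Sigma)|$ once $L$ surpasses an explicit function of $|\chi(\Sigma)|$. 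Existence of such a point $p$ is guaranteed by the collar lemma: even when some boundary components are very short, their disjoint collars have total area bounded in terms of $|\chi(\Sigma)|$, so the complement retains enough interior depth to host the embedded ball.

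Granted the auxiliary lemma, the construction of the pair-of-pants decomposition is greedy. Set $\Sigma_0 = C \in \cM_g$ and apply the lemma to extract a simple closed geodesic $\gamma_1$ of length at most $L_{\Sigma_0}$. Cut $C$ along $\gamma_1$ to obtain a surface $\Sigma_1$ (possibly disconnected) with totally geodesic boundary, of length $\ell(\gamma_1) \leq L_{\Sigma_0}$. Apply the lemma to each connected component to extract $\gamma_2$, and iterate. After exactly $3g-3$ steps the remaining pieces are pairs of pants, and every cutting curve has length at most $B_g := \max_\Sigma L_\Sigma$, the maximum taken over the finitely many topological types of subsurfaces that can arise when one successively cuts a genus-$g$ surface along disjoint simple closed curves.

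The main obstacle is ensuring that $L_\Sigma$ depends only on the topology of $\Sigma$ and not on the (possibly tiny) lengths of its boundary components, since the inductive cuts can produce boundary curves shorter than any prescribed $\epsilon$. This is precisely where Lemma \ref{universal-collar} is indispensable: a short boundary component of length $\ell$ contributes a collar of perimeter $\ell/\sin(\theta_0)$ and transverse length $F(\ell)$, which is wide but has bounded area, so the thick part of $\Sigma$ remains of bounded diameter and supports the embedded ball on which the contradiction in the auxiliary lemma hinges.
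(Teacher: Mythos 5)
The paper does not prove this statement but cites Bers \cite{bers1,bers2}; your inductive area-packing argument is essentially the standard proof of Bers' theorem (given in detail in Buser's book). However, your auxiliary lemma is \emph{false} as stated: the constant $L_\Sigma$ cannot be taken uniform over all boundary lengths. Already for $\Sigma$ a one-holed torus, consider the Fuchsian group $\langle A,B\rangle\subset\PSL_2(\R)$ with $\Tr A=\Tr B=\Tr (AB)=t$. The Fricke identity gives $\Tr[A,B]=3t^2-t^3-2$, so for $t>3$ this is a hyperbolic one-holed torus with geodesic boundary of length $2\,\mathrm{arccosh}\big((t^3-3t^2+2)/2\big)$, while the shortest essential non-peripheral simple closed geodesics (those corresponding to $A$, $B$, $AB$) have length $2\,\mathrm{arccosh}(t/2)\to\infty$ as $t\to\infty$. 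The failure mode is exactly the one your argument passes over: when the boundary is long, the area budget $2\pi|\chi|$ is consumed by a thin neighborhood of $\partial\Sigma$, so there is no point $p$ at distance $>L/2$ from $\partial\Sigma$ and the embedded-ball contradiction never materializes. Your appeal to the collar lemma addresses only \emph{short} boundary components.

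The fix is standard and fully compatible with your greedy induction: state the lemma with $L_\Sigma$ depending also on an upper bound $B$ for the boundary lengths. In the induction the new boundary curves are exactly the previously extracted geodesics, whose lengths are controlled by earlier constants, so $B_g$ is still defined by a finite recursion. (A clean route to the corrected lemma flows the equidistant curves outward from a boundary component of length $\ell$; the swept area $\ell\sinh t$ must stay below $2\pi|\chi|$, which bounds the equidistant curve length $\ell\cosh t\leq\sqrt{\ell^2+(2\pi|\chi|)^2}$ at the moment the tubular neighborhood first fails to embed, and a surgery at the resulting self-tangency produces an essential curve of comparable length.) A secondary imprecision: a short geodesic loop at $p$ need not freely homotope to a \emph{simple} geodesic nor to a boundary \emph{component} --- it can give a non-simple interior geodesic, or a power of a peripheral class --- and you need a surgery argument plus the translation-length estimate $\cosh(d(p,gp)/2)=\cosh(\ell_g/2)\cosh d(p,\mathrm{axis}\,g)$ to reduce to the two cases you actually treat.
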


By choosing $\epsilon$ so that $F(\epsilon)>B_g$,
any such pair of pants decomposition {\it must} contain
all simple closed curves of length less than $\epsilon$,
as any pair of pants decomposition not including
$\gamma_j$ would include a curve that crossed the collar of Lemma
\ref{universal-collar}.
Thus, we may extend the set $\{\gamma_1,\dots,\gamma_k\}$ to a
full pair of pants decomposition
$\{\gamma_1,\dots, \gamma_{3g-3}\}$ in such a way
that $\ell_C(\gamma_j)\leq B_g$ for all $j$.

Up to conformal equivalence, a pair of pants $P(\ell_1,\ell_2,\ell_3)$
is uniquely specified by the three cuff lengths
$\ell_1,\ell_2,\ell_3\in \R^+$. Two adjacent
pairs of pants, glued along $\gamma_i$
in a pants decomposition of $C$, contain a collar
$A_{\gamma_i}$ of transverse length
at least $F(\ell_C(\gamma_i))$, but with the bounds
$B_g$ on the chosen pairs of pants, we can
do better:

\begin{proposition}\label{def-perim} Suppose $P(\ell_1,\ell_2,\ell_3)$ is a
pair of pants with $\ell_i\leq B_g$. There
exists a constant $C_g>0$ for which each cuff is contained in
a half-collar of perimeter at least $C_g$. \end{proposition}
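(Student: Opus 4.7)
The plan is to invoke the thick--thin decomposition of hyperbolic surfaces. I would first double the pants $P$ along its boundary to obtain a closed hyperbolic surface $\Sigma$ of genus two, equipped with the orientation-reversing involution $\iota$ swapping the two halves; each cuff $\gamma_i$ becomes an $\iota$-invariant simple closed geodesic in $\Sigma$ of length $\ell_i$. Any $\iota$-invariant embedded collar around $\gamma_i$ in $\Sigma$ restricts to a half-collar around $\gamma_i$ in $P$ in the sense of Definition \ref{collar-def}.

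Fix a constant $\epsilon$ strictly below the two-dimensional Margulis constant $\mu$. By the Margulis lemma, each component of the $\epsilon$-thin part $\Sigma_{<\epsilon}$ is an embedded tubular neighborhood of a simple closed geodesic of length less than $2\epsilon$, and is automatically $\iota$-invariant when it contains some $\gamma_i$. In Fermi coordinates $(t,u)$ around $\gamma_i$, the injectivity radius at distance $u$ equals $\sinh^{-1}(\sinh(\ell_i/2)\cosh u)$, so for $\ell_i<2\epsilon$ the component containing $\gamma_i$ is precisely the equidistant tube $\{u\leq u^*\}$ defined by $\sinh(\ell_i/2)\cosh(u^*)=\sinh\epsilon$; restricted to one side, this is a half-collar in $P$ whose outer boundary has perimeter
\[
\ell_i\cosh(u^*)\;=\;\frac{\ell_i\sinh\epsilon}{\sinh(\ell_i/2)}\;\geq\;2\epsilon,
\]
the last inequality being the monotonicity of $x\mapsto\sinh(x)/x$ applied to $\ell_i/2\leq\epsilon$. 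In the complementary case $\ell_i\geq 2\epsilon$, the cuff itself has length $\geq 2\epsilon$, so any half-collar containing it has outer perimeter $\geq\ell_i\geq 2\epsilon$.

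Setting $C_g:=2\epsilon$ then completes the argument. The delicate step is identifying the Margulis component around $\gamma_i$ as the equidistant tube --- i.e., that the injectivity radius near a short cuff is realized by loops wrapping around $\gamma_i$ rather than by some competing short loop through another part of $\Sigma$ --- which is the standard consequence of the Margulis lemma for $\epsilon<\mu$; this is why we double $P$, in order to apply the lemma on a complete hyperbolic surface. A curious byproduct is that the constant $C_g$ produced this way is actually universal, independent of $B_g$ and hence of $g$, although the proposition only requires $g$-dependence.
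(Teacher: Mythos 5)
Your proof is correct, but it takes a genuinely different route from the paper's. The paper argues by compactness: viewing $P(\ell_1,\ell_2,\ell_3)$ as varying over the compact cube $[0,B_g]^3$ (allowing $\ell_i=0$, where a cuff degenerates to a cusp and the half-collar to a horoball), the maximal half-collar perimeter is a continuous, everywhere-positive function and so attains a positive minimum $C_g$. You instead double $P$ to a closed genus-two surface, invoke the Margulis lemma to identify the thin-part component around a short cuff as an $\iota$-invariant equidistant (Fermi) tube, and compute its outer perimeter explicitly, getting the lower bound $\ell_i\sinh\epsilon/\sinh(\ell_i/2)\geq 2\epsilon$ from monotonicity of $\sinh(x)/x$; for $\ell_i\geq 2\epsilon$ the cuff length itself gives the bound. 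Your argument is more quantitative and, as you note, yields a constant independent of $g$ and $B_g$ (the Margulis constant is universal in dimension two), which is strictly stronger than the stated proposition, whereas the paper's soft argument leaves implicit the continuity of the half-collar perimeter as $\ell_i\to 0^+$. One small point you should patch: in the case $\ell_i\geq 2\epsilon$ you bound the perimeter of ``any half-collar containing the cuff'' but do not actually produce one; this is immediate from Lemma \ref{universal-collar} applied to the doubled surface together with $\iota$-invariance (or, alternatively, you may simply note that the same Margulis tube argument still yields a nonempty half-collar whenever $\sinh(\ell_i/2)<\sinh\epsilon$, and for larger $\ell_i$ one falls back on Keen's collar). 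With that remark added, the proof is complete.
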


\begin{proof}
The key is to observe that even as $\ell_i\to 0$,
the geometry of $P(\ell_1,\ell_2,\ell_3)$ converges,
with the cuff $\gamma_i$ limiting to a hyperbolic cusp,
and the half-collars limiting to the horoball neighborhoods. Therefore $P(\ell_1,\ell_2,\ell_3)$
makes sense, for all $0\leq \ell_i\leq B_g$.
For each such surface,
each cuff (resp. cusp) has a
half-collar (resp. horoball) neighborhood
of non-zero perimeter. The supremum of
such perimeters is a continuous function
on the compact set $[0,B_g]^3$,
never equal to zero, and thus has a nonzero minimum.
\end{proof}

\begin{figure}
\includegraphics[width=2.8in]{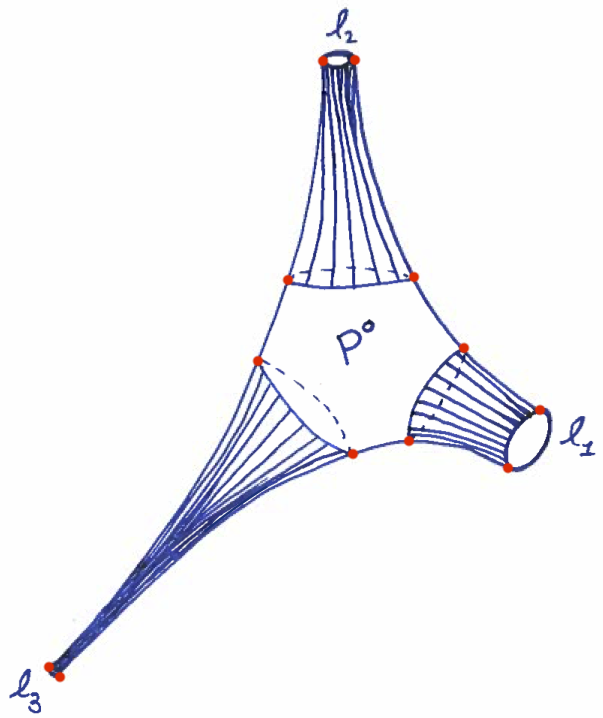}
\caption{A hyperbolic pair of pants $P(\ell_1,\ell_2,\ell_3)$,
and its truncation $P^o(\ell_1,\ell_2,\ell_3)$.
Distinguished boundary points on
$P$, $P^o$ are shown in red.}
\label{fig1}
\end{figure}

\begin{definition} The {\it truncated pair of pants} 
$P^o(\ell_1,\ell_2,\ell_3)$ (Fig.~\ref{fig1})
is the complement of the
half-collars in $P(\ell_1,\ell_2,\ell_3)$
with perimeter $C_g$. 
\end{definition}

 If $\ell_i\geq C_g$ we need not truncate the corresponding
 cuff. Making $C_g$ sufficiently small, we may assume
 that the (up to) three half-collars we cut from $P(\ell_1,\ell_2,\ell_3)$
are disjoint.

\begin{remark} The issue with truncating pairs of pants
by the universal collar of Lemma \ref{universal-collar} is
that the limit of its perimeter is
$$\lim_{\ell\to 0^+} \ell(1-e^{-\ell})^{-1/2}=0.$$ So the 
universal collar is not sufficient to bound the
geometry (e.g. as measured by the hyperbolic diameter)
of the truncated pair of pants, when $\ell\to 0$. Hence the 
need for Proposition \ref{def-perim}. \end{remark}

Consider the three seam geodesics
connecting cuffs of $P(\ell_1,\ell_2,\ell_3)$. These 
seams intersect each boundary component of
$P^o(\ell_1,\ell_2,\ell_3)$ and 
$P(\ell_1,\ell_2,\ell_3)$ at two points, see Figure \ref{fig1}. 
We call these (six total) points the {\it distinguished boundary points}
of $P^o(\ell_1,\ell_2,\ell_3)$ and
$P(\ell_1,\ell_2,\ell_3)$.
Note that the distinguished points
on a given cuff are diametrically opposite.
So when two pants are glued, the four total
distinguished points on the cuff alternate which pair of pants they come from, or the distinguished points from one pair of pants coincide with those from the other.

\begin{proposition}\label{bounded-truncated} Suppose
$\ell_1,\ell_2,\ell_3\leq B_g$ for some constant $B_g$.
Let $\mu$ be a homotopy class of paths on
the truncated pair of pants $P^o(\ell_1,\ell_2,\ell_3)$,
terminating at two distinguished points of the boundary.
Then, $\mu$ has a representative of bounded
distance $D_\mu$ independent of $\ell_i$. \end{proposition}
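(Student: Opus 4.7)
The plan is to establish Proposition \ref{bounded-truncated} by viewing the truncated pairs of pants as a continuous family of compact hyperbolic surfaces with boundary over a compact parameter space, and then extracting the bound $D_\mu$ by compactness plus continuity of length.

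First, I would extend the definition of $P^o(\ell_1,\ell_2,\ell_3)$ to all parameters $(\ell_1,\ell_2,\ell_3)\in [0,B_g]^3$, including those with some $\ell_i=0$. As in the proof of Proposition \ref{def-perim}, when $\ell_i=0$ the $i$-th cuff degenerates to a cusp; the half-collar around that cuff is replaced by a horoball neighborhood whose horocycle boundary is chosen to have perimeter $C_g$. The existence of such a horoball of definite perimeter is precisely what was used to make $C_g>0$ in Proposition \ref{def-perim}, and by the continuous dependence of the hyperbolic structure on cuff lengths (in the sense of Fenchel--Nielsen, extended to the cusp locus), the truncated pair of pants $P^o(\ell_1,\ell_2,\ell_3)$ depends continuously on $(\ell_1,\ell_2,\ell_3)\in [0,B_g]^3$ as a compact hyperbolic surface with boundary. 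In particular, the three seam geodesics and hence the six distinguished boundary points vary continuously.

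Topologically, every $P^o(\ell_1,\ell_2,\ell_3)$ is a three-holed sphere with six marked boundary points. Consequently, a homotopy class $\mu$ of paths between two distinguished points may be identified canonically across the family: one may choose a continuous trivialization of the family $(P^o)\to[0,B_g]^3$ carrying distinguished points to distinguished points, and $\mu$ becomes a well-defined homotopy class in each fiber. For each fixed parameter the surface $P^o(\ell_1,\ell_2,\ell_3)$ is compact with geodesic boundary and horocyclic boundary pieces, so $\mu$ admits a length-minimizing representative (a piecewise geodesic path) of finite length
\[
L(\mu;\ell_1,\ell_2,\ell_3):=\inf\bigl\{\mathrm{length}(\gamma)\,\big{|}\,\gamma\in \mu\bigr\}<\infty.
\]

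The function $L(\mu;\cdot)$ is continuous on $[0,B_g]^3$ by the continuity of the metric in the parameters, so attains a maximum $D_\mu$ on this compact cube; taking any near-minimizing representative in a given fiber yields a path in $\mu$ of length at most $D_\mu$, which is independent of $\ell_i$. The only real obstacle is continuity (in fact even finiteness) of $L(\mu;\cdot)$ at the degenerate boundary $\ell_i\to 0^+$, where the naive universal half-collar of Lemma \ref{universal-collar} has perimeter tending to $0$ and the truncated pants could in principle elongate. This is ruled out precisely by the replacement of the universal collar with the half-collar of definite perimeter $C_g$ from Proposition \ref{def-perim}: the truncated pants have uniformly bounded diameter on $[0,B_g]^3$, so minimizing representatives cannot escape to infinity as some cuff length collapses.
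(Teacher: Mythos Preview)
Your proposal is correct and follows essentially the same argument as the paper: extend the family of truncated pairs of pants continuously over the compact cube $[0,B_g]^3$ (allowing cusped pants when some $\ell_i=0$), observe that the minimal length of a representative of $\mu$ is finite and continuous in the parameters, and extract $D_\mu$ as its maximum on the cube. Your write-up is simply a more detailed version of the paper's three-sentence proof.
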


\begin{proof} The minimal length representative of $\mu$
on any truncated pair of pants is finite, and furthermore,
this minimal length is continuous as one varies the $\ell_i$.
This holds even when some $\ell_i=0$, corresponding to cusped
pairs of pants. The proposition follows because 
$(\ell_1,\ell_2,\ell_3)$ is restricted to lie in the compact set $[0,B_g]^3$.
\end{proof}

The next proposition is absolutely crucial.

\begin{proposition}\label{key-lemma} Let $(M,g)$ be a simply connected
Riemannian manifold with non-positive sectional curvature and let $\Psi\colon A\to M$ be a length-contracting, harmonic map from a collar. Assume
the perimeter of $A$ is bounded above by $C_g$.
Then, the image of $A$ is contained in a ball of bounded radius 
$\tfrac{1}{2}(C_g+\pi)$. \end{proposition}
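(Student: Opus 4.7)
My plan is to combine the maximum principle for harmonic maps into a Hadamard target with a barycenter construction that isolates a geodesic-like curve inside the image. First I pass to the conformal coordinates $(t,\theta)\in\R/\ell\Z\times[\theta_0,\pi-\theta_0]$ on $A$, in which the hyperbolic metric takes the form $(dt^2+d\theta^2)/\sin^2\theta$. By 2-dimensional conformal invariance of harmonicity, $\Psi$ is also harmonic for the Euclidean metric on this flat cylinder. The length-contracting hypothesis becomes $|\partial_t\Psi|_M,\,|\partial_\theta\Psi|_M\leq 1/\sin\theta$, and the perimeter bound reads $\ell/\sin\theta\leq C_g$ for every $\theta\in[\theta_0,\pi-\theta_0]$.

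Next I exploit the maximum principle. Because $M$ is Hadamard, the function $q\mapsto d^2(q,p)/2$ is smooth and convex on $M$ (with $\mathrm{Hess}\geq\mathrm{Id}$), so its composition with the harmonic map $\Psi$ is subharmonic on $A$ for every $p\in M$. Consequently, every closed ball of $M$ containing $\Psi(\partial A)$ also contains $\Psi(A)$, and the proposition reduces to producing such a ball of radius $(C_g+\pi)/2$ around $\Psi(\partial A)$.

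For each $\theta$, the image $\Psi(C_\theta)$ of the circle at height $\theta$ is a loop in $M$ of length $\leq \ell/\sin\theta\leq C_g$, hence of diameter $\leq C_g/2$, and hence contained in the closed ball of radius $C_g/2$ around its Fr\'echet mean $b(\theta)\in M$, which is well-defined and unique on Hadamard $M$. If one establishes that the barycenter curve $\theta\mapsto b(\theta)$ has length at most $\pi$, then taking $m$ to be the midpoint of the geodesic from $b(\theta_0)$ to $b(\pi-\theta_0)$ and applying the triangle inequality yields $\Psi(\partial A)\subset\overline{B(m,(C_g+\pi)/2)}$, and the maximum principle above completes the argument.

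The hard part is thus the length estimate $\mathrm{length}(b)\leq\pi$. In the flat case $M=\R^N$ it is transparent: the identity $b(\theta)=\ell^{-1}\int_0^\ell\Psi(t,\theta)\,dt$ together with $\Delta\Psi=0$ and $\ell$-periodicity in $t$ gives $b''(\theta)=0$, so $b$ is affine with constant speed bounded pointwise by the $t$-average of $|\partial_\theta\Psi|_M$, hence by $1/\sin\theta$; evaluating at the core $\theta=\pi/2$ forces $|b'|\leq 1$, so $\mathrm{length}(b)\leq \pi-2\theta_0<\pi$. For general Hadamard $M$ the Fr\'echet mean is only implicitly characterized by $\int_0^\ell\exp_{b(\theta)}^{-1}\Psi(t,\theta)\,dt=0$; the plan is to differentiate this identity in $\theta$, substitute the harmonic map equation for $\Psi$, and apply Hessian comparison on the resulting Jacobi-type operator to recover the sharp Euclidean bound. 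The delicate point is that the naive pointwise estimate $|b'(\theta)|\leq 1/\sin\theta$ is insufficient on its own, since $\int d\theta/\sin\theta$ diverges as $\theta_0\to 0$, so harmonicity must be used to show that the speed of $b$ is essentially constant --- this is the main obstacle, and the constant $\pi$ in the final bound reflects the Euclidean width $\pi-2\theta_0$ of the conformally flattened collar.
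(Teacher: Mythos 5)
Your overall structure is genuinely different from the paper's, even though both begin from the same two observations: (i) the composition of a convex function on $M$ with a harmonic map is subharmonic, so by the maximum principle it suffices to control $\Psi(\partial A)$; and (ii) each circle $C_\theta$ maps to a loop of length $\leq C_g$, hence of diameter $\leq C_g/2$. From there the paper tracks a single scalar function, $d(q)=\mathrm{dist}(\Psi(p_0),\Psi(q))$ for a fixed boundary point $p_0$, passes to its rotation-invariant maximum $d_{\max}(\theta)$ (subharmonic, hence convex in $\theta$ since rotation-invariant), subtracts the harmonic coordinate $\theta$, and uses the Lipschitz bound $|d_{\max}'(\pi/2)|\leq 1$ at the core to show $d_{\max}(\theta)\leq D+(\theta-\theta_0)$ on the half-collar. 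You instead track a curve of Fr\'echet means $\theta\mapsto b(\theta)$ in $M$ and aim to bound its length by $\pi$. Your flat-case computation ($b''=0$, constant speed $\leq 1$ by evaluating at $\theta=\pi/2$) is correct and is the ``Euclidean shadow'' of the paper's convexity argument.

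The gap is exactly where you flag it: the estimate $\mathrm{length}(b)\leq\pi$ is not established for a general Hadamard target. You offer a plan (differentiate the implicit Karcher-mean equation $\int\exp_{b(\theta)}^{-1}\Psi(t,\theta)\,dt=0$, insert the harmonic map equation, apply Hessian comparison), but this is a sketch, not an argument, and it is not at all clear it closes. Differentiating the implicit equation gives $b'=-\bigl(\int\mathrm{Hess}_b(d^2/2)\,dt\bigr)^{-1}\!\int D_2(\exp_b^{-1})\,\partial_\theta\Psi\,dt$. The Rauch bound $\mathrm{Hess}_b(d^2/2)\geq\mathrm{Id}$ helps control the inverse, but the second factor involves the differential of $\exp_b^{-1}$, whose operator norm is $\geq 1$ (not $\leq 1$) on a Hadamard manifold, so even the pointwise estimate $|b'(\theta)|\leq 1/\sin\theta$ is not automatic, let alone the ``essentially constant speed'' you need — and as you note yourself, the pointwise bound would not suffice anyway since $\int d\theta/\sin\theta$ diverges. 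Some cancellation between the harmonic map equation and the two Jacobi-type factors would have to be exhibited, and you do not exhibit it. The paper avoids this entirely by never leaving the world of scalar convex/subharmonic functions: replacing ``track the barycenter curve'' by ``track $d_{\max}(\theta)-\theta$'' converts the needed curvature comparison into the elementary statement that a convex function with nonpositive derivative at its right endpoint is nonincreasing. If you want to salvage the barycenter route, that intermediate estimate is the piece that must actually be proved.
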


\begin{proof}
Recall that the collar $A$ is parameterized
by polar coordinates $(r,\theta)\in \bH$ (Def.~\ref{collar-def}) where $r\in \R_{>0}/(r_0)^\Z$ is the circle coordinate on the collar, and $\theta\in [\theta_0,\pi-\theta_0]$ is the transverse
coordinate. Let $p_0$ be a point on the boundary component
of $A$ defined by $\theta=\theta_0$.
Define \begin{align*} d\colon A 
&\to \R_{\geq 0} \\
q&\mapsto {\rm dist}_g(\Psi(p_0),\Psi(q)).\end{align*}

As $M$ has non-positive sectional curvature and
$\pi_1(M)$ is trivial, the distance function
${\rm dist}_g(\Psi(p_0),\cdot)\colon M\to \R_{\geq 0}$ is convex, see \cite[Corollary 4.8.2]{jost}. The 
composition of a convex function with a harmonic function
is subharmonic, so the function
$d$ is subharmonic. Let $S^1(q)$ denote the 
circle containing $q\in A$ (varying
only the coordinate $r$) and
define $$d_{\rm max}(\theta):=\max_{q'\in S^1(q)} \,d(q'),$$
which is now circularly symmetric, and so
is only a function of $\theta$.
It suffices to
prove that $d_{\rm max}$ is bounded.

Since the rotation action on $A$ is conformal,
the pullback along the rotation action of $d(q)$ is
subharmonic. Thus $d_{\rm max}(\theta)$, as a maximum of
subharmonic functions, is also subharmonic.

The hyperbolic metric is $y^{-2}(dx^2+dy^2)$
on the upper half-plane, therefore \[g_{\rm hyp}(\frac{\partial}{\partial \theta}, \frac{\partial}{\partial \theta})=1,\quad  \textrm{when}\quad  \theta = \tfrac{\pi}{2}.\]
It follows that the length-contracting property,
along with the triangle inequality, implies
\begin{align*} \left|\tfrac{\partial}{\partial\theta}(d(q))\right|\leq 1 \textrm{ when }\theta(q)=\tfrac{\pi}{2}.\end{align*}
Hence 
\begin{align*}
    \left|\tfrac{d}{d\theta}(d_{\rm max}(\theta))\right|\leq 1 \textrm{ when }\theta=\tfrac{\pi}{2}.
\end{align*}

Thus $d_{\rm rel}(\theta):=
d_{\rm max}(\theta)-\theta$ has a non-positive
derivative at $\theta=\tfrac{\pi}{2}$.
On the other hand, $\theta$ is harmonic so
$d_{\rm rel}(\theta)$ is again subharmonic.
As a subharmonic function with a non-positive derivative
at $\tfrac{\pi}{2}$, we have that
$d_{\rm rel}(\theta)$ is bounded above by its value
at the left endpoint $p_0$ for all $\theta\leq \tfrac{\pi}{2}$.
Let $D\leq \tfrac{1}{2}{\rm Per}(A)\leq \tfrac{1}{2}C_g$ denote the hyperbolic diameter of
a boundary component of $A$. By the length-contracting
property, we have $d_{\rm rel}(\theta_0)\leq D-\theta_0$ so
$$d_{\rm max}(\theta)\leq D+ (\theta-\theta_0)<\tfrac{1}{2}(C_g+ \pi)\textrm{ for all }\theta\leq \tfrac{\pi}{2}.$$

Applying the same argument to a point
$p_0$ on the other boundary component of the collar,
we conclude that for a point $p'$ on the core curve,
the ball of radius $\tfrac{1}{2}(C_g+\pi)$ about its 
image contains the image of the boundary of $A$ 
entirely. We conclude the result
by the maximum principle, as $q\mapsto {\rm dist}_g(\Psi(p'),\Psi(q))$
is subharmonic.
\end{proof}

\begin{lemma}\label{bound-away}
There is a constant $\mu_n>0$ depending only on
$n$ such that: For any arithmetic group $\Gamma$ acting on a
period domain $\bD$ classifying $\Z$-PVHS of rank at most $n$,
and for any $p\in \bD$, we have
$$d_\bD(p,\gamma(p))>\mu_n\textrm{ for all }\gamma\in \Gamma\textrm{ non-quasi-unipotent}.$$
\end{lemma}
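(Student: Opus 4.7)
The plan is to combine two ingredients: an arithmetic lower bound on the spectral radius of a non-quasi-unipotent element of $\GL_n(\Z)$, and a Hodge-theoretic Lipschitz estimate converting the spectral radius bound into a distance bound on $\bD$.

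For the arithmetic step, recall Kronecker's theorem: an algebraic integer all of whose Galois conjugates have modulus at most $1$ is either $0$ or a root of unity. Applied to the characteristic polynomial $\chi_\gamma(x)\in\Z[x]$, this says $\gamma\in\GL_n(\Z)$ is quasi-unipotent if and only if $\chi_\gamma$ is a product of cyclotomic polynomials, equivalently, if and only if its Mahler measure $M(\chi_\gamma)=1$. From the coefficient bound $|a_k|\leq \binom{n}{k}M(\chi)$, the set of monic polynomials in $\Z[x]$ of degree $\leq n$ with $M(\chi)\leq 2$ is finite. Hence there is some $c_n>1$, depending only on $n$, such that every non-cyclotomic monic polynomial in $\Z[x]$ of degree $\leq n$ has $M\geq c_n$. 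For non-quasi-unipotent $\gamma$ this gives
\[|\lambda_{\max}(\gamma)| \geq c_n^{1/n} =: \beta_n > 1.\]

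For the geometric step, each point $p\in \bD$ determines a positive definite Hermitian form $h_p$ on $V_\C$ (the Hodge metric, built from the Weil operator and polarization), and the identity $h_{\gamma p}(v,v)=h_p(\gamma^{-1}v,\gamma^{-1}v)$ holds. Fix a nonzero eigenvector $v\in V_\C$ of $\gamma$ with eigenvalue $\lambda$ of maximal modulus. Then $\phi_v(\gamma p)-\phi_v(p) = -2\log|\lambda|$, where $\phi_v(p):=\log h_p(v,v)$. Differentiating $\phi_v$ along the infinitesimal action of $X\in\mathfrak{g}$ at $p$ yields $|d\phi_v(X)| \leq 2\|X\|_{\mathrm{op},h_p}$, where $\|\cdot\|_{\mathrm{op},h_p}$ is the operator norm on $(V_\C, h_p)$. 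Comparing the $G$-invariant Riemannian metric on $\bD$ (induced by the Killing form restricted to the horizontal part $\mathfrak{p}$ of the Cartan decomposition $\mathfrak{g}=\mathfrak{k}\oplus\mathfrak{p}$) with the operator-norm metric pulled back from $\End(V_\C)$, one obtains a Lipschitz bound $|\phi_v(p)-\phi_v(q)|\leq C_n\cdot d_\bD(p,q)$ with $C_n$ depending only on $n$. Combining these estimates, $d_\bD(p,\gamma p)\geq (2\log \beta_n)/C_n =:\mu_n>0$ for every $p\in\bD$.

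The main obstacle is establishing the uniformity of the Lipschitz constant $C_n$: since $\bD$ depends on the Mumford--Tate group $\bfG\subset\GL_n$, which varies, the $G$-invariant metric on $\bD$ also varies, and one must show that the ratio between the Killing form on $\mathfrak{g}$ (controlling distances on $\bD$) and the restriction of the trace form from $\mathfrak{gl}_n$ (controlling the operator-norm metric) is uniformly bounded across all reductive $\Q$-subgroups of $\GL_n$ of rank at most $n$. Since every such $\bfG$ acts faithfully on $V_\C$ with $\dim V \leq n$, morally the two forms differ only by constants bounded in terms of $n$; making this precise --- either by an explicit normalization argument on the Killing form, or by a compactness argument on the space of pairs $(\bfG,\bD)$ --- is the technical heart of the lemma.
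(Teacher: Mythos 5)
Your arithmetic step works and is arguably more self-contained than the paper's: instead of invoking the effective Blanksby--Montgomery bound, you derive a uniform spectral gap $|\lambda_{\max}(\gamma)|\geq\beta_n>1$ from finiteness of monic integer polynomials of degree $\leq n$ with Mahler measure $\leq 2$. Both deliver the same input.

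The geometric step is where you diverge from the paper, and where the gap you flag at the end is real. The paper sidesteps the uniformity problem entirely with a finiteness observation at the very start of its proof: up to isomorphism there are only finitely many Mumford--Tate domains $\bD=G/K$ of Hodge type with $G$ acting faithfully in rank $\leq n$ (finitely many such $G$, finitely many Hodge-number patterns, hence finitely many $K$). So one may fix $\bD$, prove the bound there, and take the minimum over the finite list---no compactness or normalization argument is needed. With $\bD$ fixed, the paper then projects to the associated symmetric space $S=G/K_{\max}$ by a length-contracting map and reads off the translation length $\inf_p d_S(p,\gamma p)=\bigl(\sum_i(\log|\lambda_i|)^2\bigr)^{1/2}$ directly from the eigenvalues, which immediately gives $\mu_n$. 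Your route via the Lipschitz estimate on $\phi_v(p)=\log h_p(v,v)$ is a legitimate alternative for a fixed $\bD$: the identity $\phi_v(\gamma p)=\phi_v(p)-2\log|\lambda|$ and the Cauchy--Schwarz bound $|d\phi_v(X)|\leq 2\|X\|_{\mathrm{op},h_p}$ are correct. But the step from $\|X\|_{\mathrm{op},h_p}$ to the $G$-invariant metric on $T_p\bD$ (minimizing over representatives $X\in\mathfrak g$ modulo $\mathfrak k_p$, then comparing norms) is exactly the uniformity issue you leave open, and the paper's version of the Lemma requires that constant to be independent of $\bfG$. You gesture at a compactness argument on pairs $(\bfG,\bD)$; if carried out it would in effect reproduce the paper's finiteness observation. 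In short: different and essentially workable geometric mechanism, but the crux you correctly identify as unresolved is handled in the paper by the much simpler remark that there are only finitely many $\bD$'s to consider, and the translation-length formula on the symmetric space avoids any pointwise metric comparison.
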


\begin{proof} There are only finitely many possible spaces $\bD$, 
corresponding to real Lie groups $G$
of Hodge type and bounded rank,
and compact subgroups $U\subset G$, up to conjugacy. 
Since it is monic of degree $n$, we can apply the following effective form of Kronecker's theorem due to \cite[Corollary]{blanksby}, see also the recent work of Dimitrov \cite[Theorem 1]{dimitrov-schinzel} which provides the sharpest bounds.

\begin{theorem}[\cite{blanksby}]\label{blanks} Let $\alpha$ be an algebraic integer of degree $d\leq n$.
Either $\alpha$ is a root of unity, or the largest Galois conjugate of $\alpha$ has absolute value at least $$c_n = 1+\frac{1}{30n^2\log(6n)}.$$
\end{theorem}

Factoring $\chi_\gamma(t)$ into irreducible factors, 
this theorem bounds the norm of the
largest eigenvalue of  $\gamma$ away from $1$,
whenever $\gamma$ is non-quasi-unipotent.
Let $\lambda_1, \dots, \lambda_n$ be these
eigenvalues and let 
$$L_\gamma:=\inf_{p\in \bD}
d_\bD(p,\gamma(p))$$ be the translation length.

Let $S=G/K$ be the symmetric space
associated to the real group $G$. Here
$K\subset G$ is a maximal compact
subgroup containing $U$. Consider the map $$\bD=G/U\xrightarrow{\pi} G/K=S.$$ For appropriate left $G$-invariant
metrics, this map is length-contracting.
Then, $L_\gamma\geq \inf_{p\in S} d_S(p,\gamma(p))$.
We have, see \cite[Ex.~7.1]{breuillard-fujiwara},
\[d_S(p,\gamma\cdot p)=\sqrt{(\log a_1(p))^2+\cdots
+(\log a_n(p))^2},\]
where $\gamma=k_1ak_2$ with $k_1,k_2\in K_p$ and $a=\diag(a_1(p),\ldots,a_n(p))\in \R_+^n$ is the Cartan decomposition
of $\gamma$ with respect to the compact isotropy
group $K_p = {\rm Stab}_G(p)$. We have $\max_i a_i(p)\geq \max_i |\lambda_i|$ and thus we conclude $L_\gamma\geq \max_i \log |\lambda_i|$.

Hence, taking $\mu_n<\log|c_n|$ and applying
Theorem \ref{blanks},
we conclude that $L_\gamma> \mu_n$
for non-quasi-unipotent $\gamma$.
\end{proof}

\begin{corollary}\label{monodromy-trivial}
Consider a $\Z$-PVHS of rank $n$
with $\Q$-anisotropic monodromy over a curve $C$. Up to passing to
a finite \'etale cover of fixed degree, there is an $\epsilon>0$
such that, for any $\gamma\in \pi_1(C)$ with $\ell_C(\gamma)<\epsilon$, the monodromy of $\gamma$ is trivial: $\rho(\gamma)=I\in \Gamma.$
\end{corollary}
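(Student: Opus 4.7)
The plan is to convert a hyperbolic length bound on $\gamma$ into a translation length bound for $\rho(\gamma)$, apply \Cref{bound-away} to force $\rho(\gamma)$ to be quasi-unipotent, and then use the compact type hypothesis together with passage to a neat subgroup to conclude triviality. Note that for this corollary---which concerns only $\gamma$ whose geodesic representative is itself short---the collar machinery of \Cref{key-lemma} is not needed; the length-contracting property by itself suffices. The collar analysis will instead be required to control the monodromy of \emph{long} curves in thin-part surfaces.

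First, I would lift the period map to $\tilde\Phi\colon \tilde C\to \bD$, where $\tilde C\simeq \bH$ carries the hyperbolic metric of curvature $-1$. Any nontrivial $\gamma\in\pi_1(C,*)$ acts as a hyperbolic isometry of $\tilde C$ with translation length equal to $\ell_C(\gamma)$. Picking $\tilde x$ on its axis, the equivariance $\tilde\Phi(\gamma\cdot\tilde x)=\rho(\gamma)\tilde\Phi(\tilde x)$ combined with the length-contracting property of \Cref{length-contracting} yields
\[
d_\bD\bigl(\tilde\Phi(\tilde x),\,\rho(\gamma)\tilde\Phi(\tilde x)\bigr)\;\leq\;d_{\tilde C}(\tilde x,\gamma\cdot \tilde x)\;=\;\ell_C(\gamma),
\]
so the translation length of $\rho(\gamma)$ on $\bD$ is at most $\ell_C(\gamma)$.

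Next, I would fix $\epsilon<\mu_n$ with $\mu_n$ as in \Cref{bound-away}. If $\ell_C(\gamma)<\epsilon$, the translation length of $\rho(\gamma)$ falls strictly below $\mu_n$, and \Cref{bound-away} forces $\rho(\gamma)$ to be quasi-unipotent. Now the compact type hypothesis enters: $\bfH$ is semisimple, being normal in $\bfG^{\rm der}$, and $\bfH(\Z)\subset \bfH(\R)$ is cocompact by assumption. Godement's compactness criterion (equivalently the Borel--Harish-Chandra / Mostow theorem) then implies that $\bfH(\Z)$ contains no nontrivial unipotent elements, so every quasi-unipotent element of $\bfH(\Z)$ has finite order. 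In particular $\rho(\gamma)$ is torsion.

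Finally, by Selberg's lemma / Borel's theorem on arithmetic subgroups, there is an integer $N_n$ depending only on $n$ such that every subgroup of $\GL_n(\Z)$ contains a neat subgroup of index at most $N_n$. Replacing $C$ by the corresponding finite \'etale cover of degree at most $N_n$, the pulled-back monodromy factors through this neat---hence torsion-free---subgroup, forcing $\rho(\gamma)=I$. The main subtlety, and the step I would check most carefully, is the implication ``compact type $\Rightarrow$ quasi-unipotent $=$ torsion'': one must confirm that $\bfH$ is semisimple and invoke the correct form of Godement's criterion. Once that is in hand, the remainder is a direct assembly of \Cref{length-contracting}, \Cref{bound-away}, and standard facts about arithmetic lattices.
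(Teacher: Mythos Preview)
Your proposal is correct and follows essentially the same route as the paper: combine the length-contracting property with \Cref{bound-away} to force $\rho(\gamma)$ to be quasi-unipotent, use the compact type hypothesis to conclude $\rho(\gamma)$ is torsion, and then kill torsion uniformly by passing to a neat finite-index subgroup of $\GL_n(\Z)$. The paper's proof is a one-line sketch of exactly these steps; your version simply supplies the details, including the explicit invocation of Godement's criterion for the ``quasi-unipotent $\Rightarrow$ torsion'' implication, which the paper states without justification.
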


\begin{proof} This follows from Lemma \ref{bound-away},
the length-contracting property, and the 
fact that in the compact
type case, the only quasi-unipotent elements of $\Gamma$ are of finite order. Note that for all possible $\Gamma\subset \GL_n(\Z)$, the torsion can be killed at a fixed finite level, since
this holds for the entire group $\GL_n(\Z)$. \end{proof}

\begin{proposition}\label{collar-bound}
Let $(C,\gamma)$ and $\epsilon$ be as above, and let
$A$ be a hyperbolic collar on $C$ containing $\gamma$, of perimeter $C_g$.
Then the period map $A\to \Gamma\backslash \bD$ lifts to 
a period map $\Phi\colon A\to \bD$. Furthermore, the image
of $\Phi$ is contained in a ball of bounded radius $B$. 
\end{proposition}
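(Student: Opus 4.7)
The plan is to establish the lift and the bound separately.

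For the lift, I would note that the collar $A$ is topologically an annulus, so $\pi_1(A) \cong \Z$ is generated by the core curve $\gamma$. Since $\ell_C(\gamma) < \epsilon$ and the variation is of compact type, Corollary \ref{monodromy-trivial} forces $\rho(\gamma) = I \in \Gamma$ after passing to a finite \'etale cover of fixed degree. The monodromy of the period map restricted to $A$ is therefore trivial, and covering space theory supplies the desired lift $\Phi : A \to \bD$.

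For the boundedness of the image, the strategy is to invoke Proposition \ref{key-lemma}, whose hypotheses require a length-contracting harmonic map into a simply connected Riemannian manifold with non-positive sectional curvature. The period domain $\bD = G/K$ is not globally non-positively curved, so I would project onto the symmetric space $S = G/K_{\max}$, where $K \subset K_{\max}$ is a maximal compact subgroup. As already used in the proof of Lemma \ref{bound-away}, $S$ is simply connected and non-positively curved, and the $G$-equivariant projection $\pi : \bD \to S$ is length-contracting for suitable $G$-invariant metrics. Hence the composition $\pi \circ \Phi : A \to S$ is length-contracting.

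For the harmonicity of $\pi \circ \Phi$ I would invoke the classical fact that a holomorphic, Griffiths-transverse period map from a Riemann surface induces a harmonic map into the associated symmetric space of $G$: this is the pluriharmonicity of period maps, packaged through the Cartan decomposition underlying $\bD \to S$. Applying Proposition \ref{key-lemma} to $(M,g) = (S, g_S)$ and $\Psi = \pi \circ \Phi$ then places $\pi(\Phi(A))$ inside a ball of radius $\tfrac{1}{2}(C_g + \pi)$ in $S$. Since the fibers of $\pi$ are copies of the compact homogeneous space $K_{\max}/K$, the $\pi$-preimage of a bounded ball in $S$ is a bounded subset of $\bD$, giving a constant $B$ depending only on $\bD$ and $C_g$.

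The main obstacle I anticipate is the verification of harmonicity of $\pi \circ \Phi$ together with precise control of the $G$-invariant metrics on $\bD$ and $S$, so that both length-contracting and harmonicity hold for the single map to which the key lemma is applied. This is a metric-geometric bookkeeping exercise rather than a conceptual difficulty, but it must be done carefully so that the bound $B$ depends only on $\bD$ (equivalently, on $G$ and $n$) and on the collar perimeter $C_g$, and not on the particular collar or curve.
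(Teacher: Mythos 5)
Your proof is correct and follows the paper's argument step for step: lift $\Phi$ to $\bD$ using Corollary~\ref{monodromy-trivial}, project to the symmetric space $S = G/K_{\max}$, check harmonicity and the length-contracting property of $\pi \circ \Phi$, apply Proposition~\ref{key-lemma}, and pull the bound back to $\bD$ via the compactness of the fibers $K_{\max}/K$. If anything, your justification of harmonicity via pluriharmonicity of period maps is more careful than the paper's brief assertion that $\pi$ is harmonic and postcomposing with a holomorphic map preserves harmonicity.
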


\begin{proof} The restriction of the period map to $A$ lifts to $\bD=G/U$ by Corollary \ref{monodromy-trivial}, because the
monodromy of the core curve is trivial, and the core curve
generates $\pi_1(A)$.

Define $\Psi = \pi\circ \Phi$ to be the composition
of the period map $\Phi\colon H\to \bD$ with the quotient map $\pi\colon \bD \to S=G/K$ to the symmetric space. Then $\Psi$ is a harmonic map, as the composition of a holomorphic horizontal map and $\pi$, \cite[Theorem 1.1]{Lu}.

Applying Proposition \ref{key-lemma}, we conclude that
for $p,q$ two points on the two boundary components of $A$,
the distance $$d_S(\Psi(p), \Psi(q))$$ is bounded.
Here we use that $S$ is non-positively curved, simply
connected, and that
$\pi \colon \bD\to S$ is distance-decreasing, so 
$\pi\circ \Phi$ is also distance-decreasing.
The fibers of $\pi$ are isometric, compact submanifolds
$K/U\subset \bD$.
We conclude that the distance between
$\Phi(p)$ and $\Phi(q)$
is also bounded.
\end{proof}

We now cover $\mathcal{M}_g$ by a finite collection of contractible sets using Fenchel--Nielsen coordinates.
\medskip 

\begin{figure}[H]
\includegraphics[width=4in]{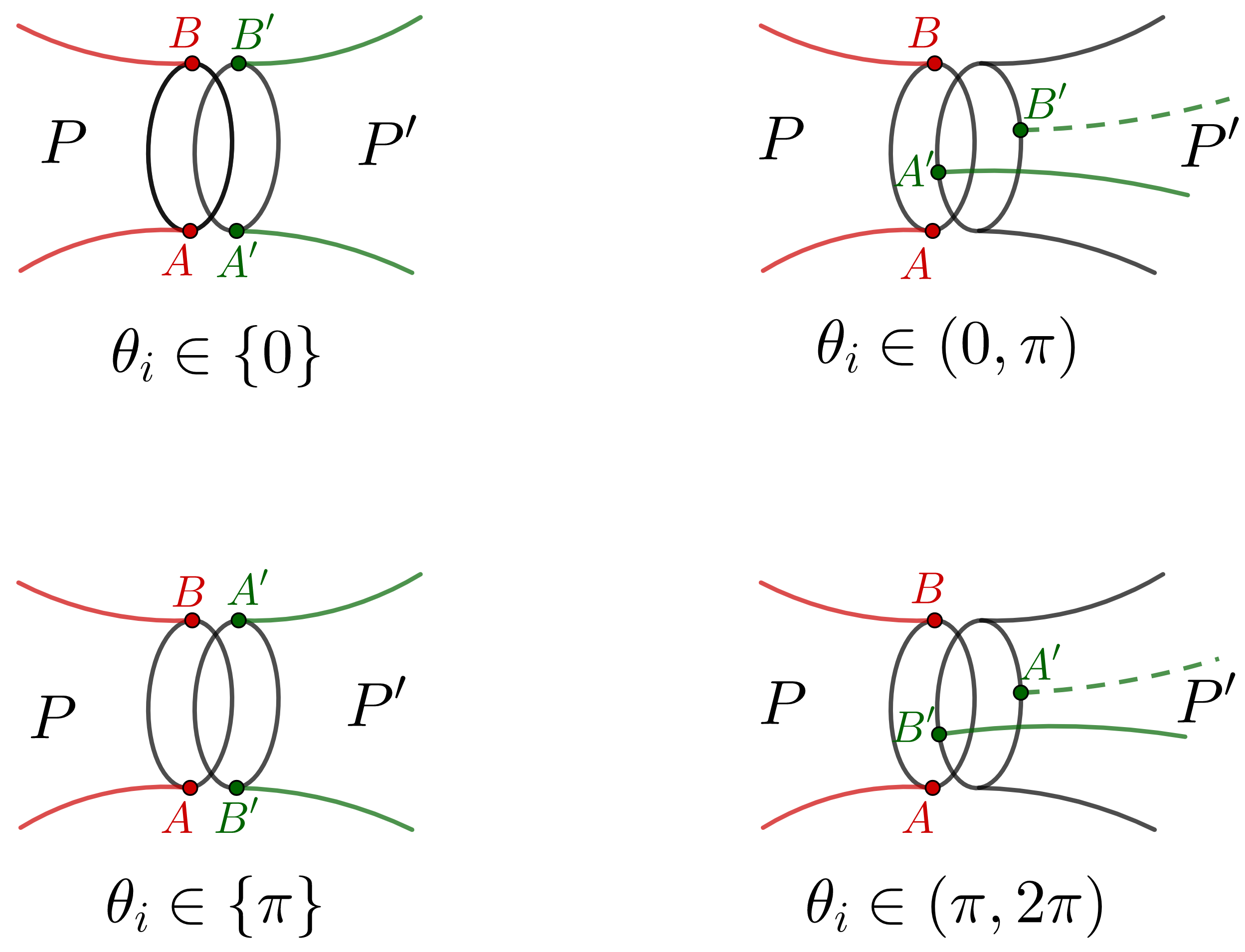}
\caption{The four possible configurations of the distinguished points $A,B$ and $A',B'$ which result from gluing two pairs of pants $P$ and $P'$ along a cuff.}
\label{glue}
\end{figure}

Let $R$ be a hyperbolic pair of pants decomposition of a Riemann surface of genus $g$, together with
an ordering of the $3g-3$ simple closed curves $(\gamma_1,\dots,\gamma_{3g-3})$ forming
the cuffs. The {\it Fenchel-Nielsen chart} on $\mathcal{M}_g$ associated to $R$ is the map $\R_+^{3g-3}\times (S^1)^{3g-3}\to \mathcal{M}_g$ sending $((\ell_1,\dots,\ell_{3g-3}),(\theta_1,\dots,\theta_{3g-3}))$ to the Riemann surface built from pairs of pants with cuff lengths given by the $\ell_i$ and glued together using the twist parameter $\theta_i$ along the $i$th cuff, see \cite[Section 10.6]{farb-margalit}.

Ranging over all possible topological types $\{R_k\}$ of pair of pants decompositions, Theorem \ref{pair-of-pants} implies that we can
cover $\mathcal{M}_g$ by a finite number of contractible sets of the form $$W_i:=(0,B_g]^{3g-3}\times (U_1\times \cdots \times U_{3g-3})$$ where each $U_j\subset S^1$ is a subset of one of the following four forms: $\{0\}$, $(0,\pi)$, $\{\pi\}$, $(\pi,2\pi)$. These four forms correspond to the four gluing configurations of the distinguished points on a cuff, see Figure \ref{glue}.

For each chart $W_i$, choose for some $C\in W_i$ an admissible identification $\pi_1(C,*)\simeq \Pi_g$ where $*$ is one of the distinguished points on a fixed cuff.
This specifies a ``reference'' set of Procesi generators $(\delta_j)_{1\leq j\leq m}$ 
over each $W_i$. Each $\delta_j\in \pi_1(C,*)$ is homotopic to a composition of paths of the following two forms, see \Cref{fig2}:

\begin{enumerate}
    \item paths $\mu$ contained in a pair of pants, which terminate at distinguished points on the cuffs, and 
    \item paths $\nu$ circling around the cuff which connect two distinguished points coming from adjacent pairs of pants.
\end{enumerate}

\begin{figure}[H]
\includegraphics[width=5in]{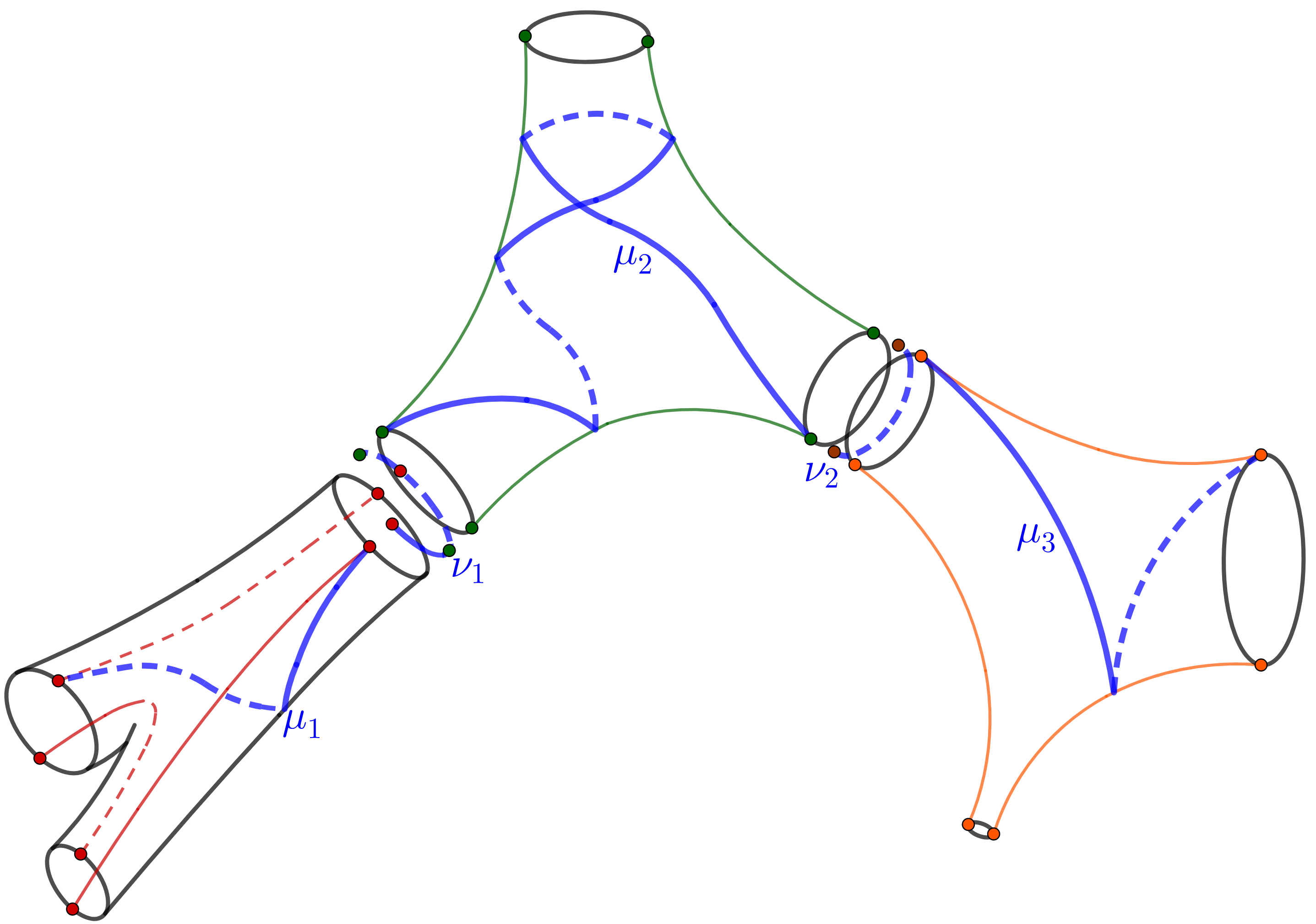}
\caption{Seam geodesics on three pairs of pants in red, green, orange, with distinguished points on the cuff in the same color. The decomposition of a loop $\delta$ into paths $\mu$ in pants and $\nu$ in cuffs, depicted in blue.}
\label{fig2}
\end{figure}

Furthermore, the relative homotopy classes of the $\mu$ and $\nu$ paths can be identified over all of $W_i$. With this geometric
set-up, we proceed to our main
theorem:

\begin{theorem} Up to admissible identification and
conjugation, there are only finitely many $\Q$-anisotropic
representations $\rho\colon \Pi_g\to \GL_n(\Z)$
which underlie a $\Z$-PVHS on some curve in $\cM_g$.
\end{theorem}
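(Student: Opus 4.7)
The plan is to extend Proposition \ref{deligne-thick} from the thick part $\cM_g^{\geq\epsilon}$ to the thin part $\cM_g^{<\epsilon}$, combining Bers's theorem (Theorem \ref{pair-of-pants}), the truncated-pants estimate (Proposition \ref{bounded-truncated}), and the key collar bound (Proposition \ref{collar-bound}). The strategy is to bound, for a fixed set of Procesi generators $\delta_1,\dots,\delta_m$ of $\Pi_g$, the $\bD$-translation length of each $\rho(\delta_j)$, then invoke Lemma \ref{translation-bound} and Procesi's theorem. After passing to a finite-index torsion-free subgroup of $\GL_n(\Z)$ of fixed index in terms of $n$, Corollary \ref{monodromy-trivial} provides an $\epsilon>0$ such that curves of hyperbolic length $<\epsilon$ have trivial monodromy.

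For $C\in\cM_g^{<\epsilon}$, I would first complete the short geodesics to a Bers pants decomposition $\{\gamma_1,\dots,\gamma_{3g-3}\}$ with $\ell_C(\gamma_j)\leq B_g$, decomposing $C$ into truncated pairs of pants glued along collars of perimeter $C_g$. Since the dual graph of a pants decomposition has one of finitely many combinatorial types depending only on $g$, and $\mathrm{Mod}_g$ acts transitively on decompositions of fixed type, I may choose an admissible identification so that the topological pants decomposition of $C$ matches one of a finite list of models on the reference surface $C_0$. For each such model, fix once and for all Procesi generators $\delta_1,\dots,\delta_m\in\Pi_g$ represented by topological loops on $C_0$ whose combinatorial complexity---the number of pants and collars traversed---is bounded uniformly in $g$; for example, concatenations of seam arcs between distinguished basepoints in pants, together with small loops around a bounded number of cuffs.

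Transport such a loop representing $\delta_j$ to $C$ and lift the period map along it to $\bD$ from a fixed lift $\tilde{p}_0$, so that the terminal point is $\rho(\delta_j)\cdot\tilde{p}_0$. The $\bD$-length of the lift splits as a sum of contributions from truncated-pants and collar segments, each contribution uniformly bounded. For segments in truncated pants, Proposition \ref{bounded-truncated} furnishes a representative of hyperbolic length $\leq D$ depending only on $g$, and Theorem \ref{length-contracting} converts this into an equal bound on $\bD$-length. For segments in a collar $A_{\gamma_i}$: when $\ell_C(\gamma_i)\geq \epsilon$ the collar has bounded hyperbolic diameter (its perimeter $C_g$ and cuff length in $[\epsilon,C_g]$ bound the transverse length), so length-contraction applies directly; when $\ell_C(\gamma_i)<\epsilon$ the monodromy of $\gamma_i$ is trivial, the period map lifts on $A_{\gamma_i}$, and Proposition \ref{collar-bound} confines the image to a ball of radius $\tfrac{1}{2}(C_g+\pi)$. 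Summing yields $d_\bD(\tilde{p}_0,\rho(\delta_j)\cdot\tilde{p}_0)\leq M$ for some $M=M(g,n)$.

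Applying Lemma \ref{translation-bound} bounds the integer traces ${\rm tr}(\rho(\delta_j))$, and Procesi's theorem produces only finitely many possibilities up to conjugacy, exactly as in Proposition \ref{deligne-thick}. Combined with the thick-part result, this completes the theorem. The main obstacle will be the combinatorial-topological bookkeeping in the second paragraph: enumerating the finite list of model pants decompositions, verifying that the $\mathrm{Mod}_g$-orbit structure on pants decompositions lets us reduce to such a model via admissible identification, and exhibiting explicit Procesi generators on each model whose realization on $C$ crosses pants and collars a bounded number of times independent of $C$. The analytic heavy lifting is already done by Propositions \ref{bounded-truncated} and \ref{collar-bound}.
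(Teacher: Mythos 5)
Your proposal follows essentially the same strategy as the paper's proof: Bers pants decomposition with $\ell_C(\gamma_j)\leq B_g$, reduction to finitely many reference topological types of decompositions via $\mathrm{Mod}_g$ and admissible identifications, fixed Procesi generators decomposed into segments in truncated pants versus segments crossing collars, with each contribution bounded uniformly (Propositions \ref{bounded-truncated} and \ref{collar-bound}, plus length-contraction for the long-core-curve collars), and finally Lemma \ref{translation-bound} and Procesi's theorem to conclude finiteness of traces hence of representations. The ``combinatorial bookkeeping'' you flag as the remaining obstacle is handled in the paper exactly as you anticipate: by fixing reference decompositions $\{R_k\}$ with reference seams and reference loop representatives of the Procesi generators, then transporting them to $C$ via an oriented homeomorphism matching the Bers decomposition and seams; the only small piece your categorization glosses over is a third segment type (arcs winding along a cuff between distinguished points of opposite pants, bounded by $B_g$ times the winding number), which the paper isolates explicitly, but this is cosmetic.
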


\begin{proof}
Let $\Phi\colon C\to \Gamma\backslash \bD$ be the period map
of a $\Z$-PVHS of rank $n$ on some curve $C\in \cM_g^{<\epsilon}$. Take a Bers pair-of-pants 
decomposition of $C$ as in Theorem
\ref{pair-of-pants}, realizing $C\in W_i$ as
an element of one of the above open sets $W_i$ covering
$\mathcal{M}_g$.

On all $C\in W_i$, we have a collection
of representatives of Procesi generators $\delta_j$
which decompose into of paths as in Figure \ref{fig2}.
Applying Proposition \ref{def-perim}, we may decompose
each generator $\delta_j$, up to homotopy, into three types of
paths, see Figure \ref{homotopy}:
\begin{enumerate}
    \item paths in a fixed homotopy
    class $\mu$ relative to two distinguished points
    on a truncated pair of pants $P^o(\ell_1,\ell_2,\ell_3)$ with $\ell_i\leq B_g$, 
    \item transverse geodesics on a half-collar of perimeter $C_g$, and
    \item paths winding around a cuff,
    in a fixed homotopy class $\nu$ relative to two distinguished points coming from opposite pairs of pants.
\end{enumerate}

\begin{figure}
\includegraphics[width=4in]{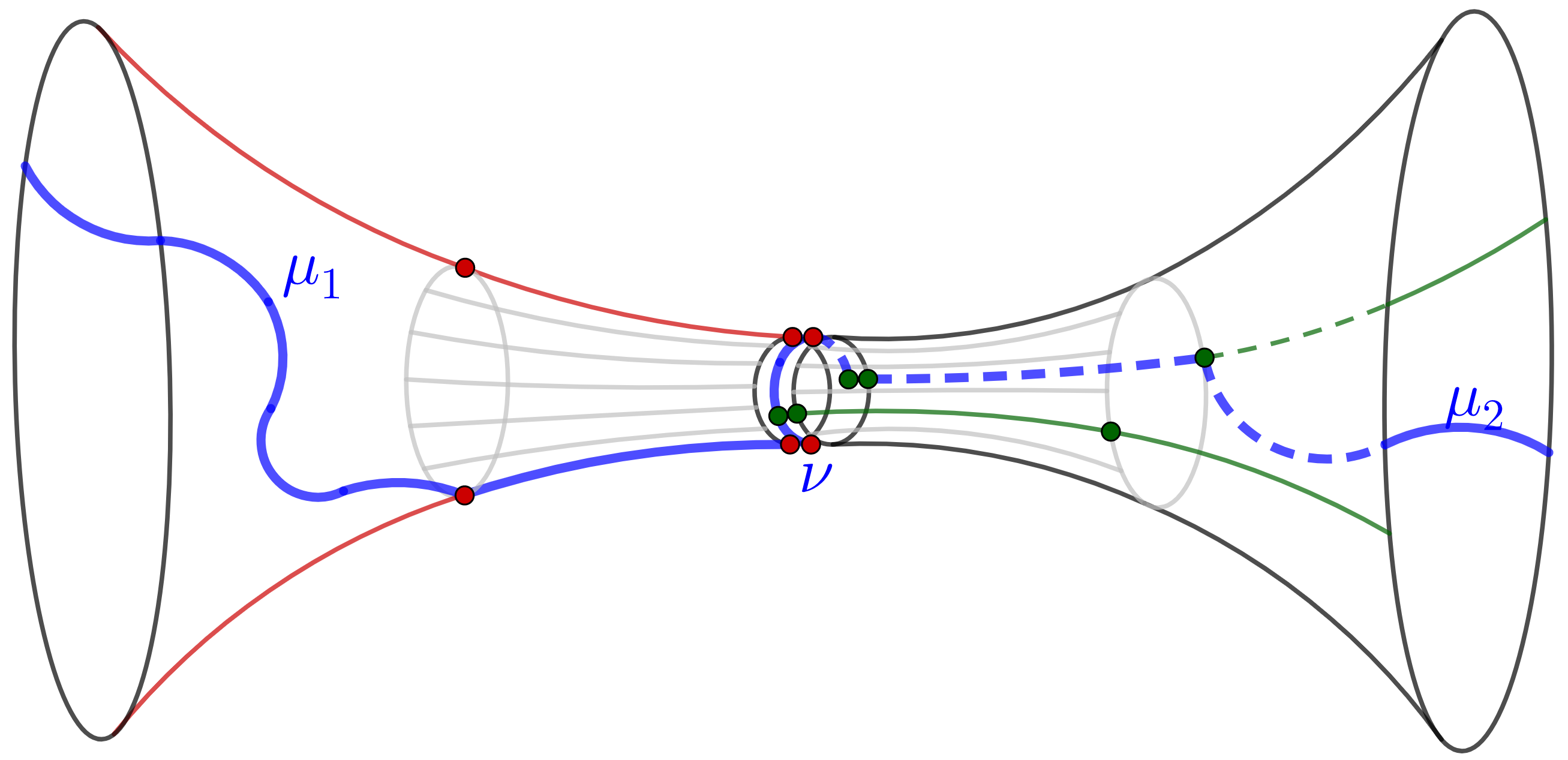}
\caption{Homotopy of the representatives of $\gamma_j$.}
\label{homotopy}
\end{figure}

Let $\widetilde{\Phi}\colon \widetilde{C}\to \bD$ be the
lift of the period map to the universal cover of $C$
and let $[0,1]$ be the lift of the loop $\delta_j$
to a path in $\widetilde{C}$. Then
$$d_\bD(\widetilde{\Phi}(0),\widetilde{\Phi}(1))\leq \!\!\!\!\! \sum_{\substack{ \textrm{paths in} \\ \textrm{truncated pants}}} \!\!\!\!\! D_\mu +\sum_{\substack{ \textrm{paths in} \\ \textrm{cuffs}}} L_\nu + 2e\max\{B,B'\}\textrm{ where}$$
\begin{enumerate}
    \item $D_\mu$ bounds the length of a representative of a relative homotopy class $\mu$ in the truncated pairs of pants (Prop.~\ref{bounded-truncated}),
    \item $L_\nu=B_g\cdot \textrm{winding}(\nu)$ bounds the length of the geodesic representing $\nu$ purely in terms of the relative homotopy class,
    \item $B$ bounds the radius of a ball covering the image of a collar
(Prop.~\ref{collar-bound}) whose core curve has length less than $\epsilon$,
\item $B'$ bounds the length of a transverse geodesic on a half-collar with core curve of length at least $\epsilon$ and perimeter $C_g$, and 
\item $e$ is the total number of 
collars crossed.
\end{enumerate}

Thus 
$d_{\bD}(\widetilde{\Phi}(0),\widetilde{\Phi}(1))$
is bounded.
We conclude by Lemma \ref{translation-bound} that in turn,
the trace ${\rm tr}(\rho(\delta_j))$ is bounded.
Then, the theorem follows as in
Proposition \ref{deligne-thick}.
\end{proof}

\begin{corollary}\label{monodromy-finite}
Let $\cS$ be a smooth connected quasi-projective complex algebraic variety and let $\pi:\cY\to \cS$ be a smooth projective morphism. There are only finitely representations $\pi_1(Y_0,*)\rightarrow 
\GL_n(\Z)$, up to conjugacy, which underlie a $\Z$-PVHS with $\Q$-anisotropic monodromy on some fiber $Y_s$ of $\pi:\cY\rightarrow\cS$, after an identification $\pi_1(Y_0,*)\simeq\pi_1(Y_s,*)$ induced by
moving $*$ in $\cY$.
\end{corollary}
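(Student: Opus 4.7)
The plan is to reduce to the case of a family of smooth projective curves, where the theorem just established for the universal curve $\cC_g\to \cM_g$ applies directly.

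First I would construct a family of smooth projective curves inside $\cY$. By passing to a stratification of $\cS$, I may assume all fibers $Y_s$ have a common dimension $d$. Iteratively applying the relative Bertini theorem, after replacing $\cS$ by a finite Zariski cover (taking suitable parameter spaces for very ample linear systems relative to $\cS$), one produces a family of smooth projective curves $\cC\subset \cY$ over $\cS$ of some fixed genus $g$, obtained as the complete intersection of $d-1$ sufficiently general relative hyperplane sections. By the Lefschetz hyperplane theorem applied fiberwise, the inclusion $C_s\hookrightarrow Y_s$ induces a surjection
\[
\pi_1(C_s,*_s)\twoheadrightarrow \pi_1(Y_s,*_s).
\]
Hence every representation $\rho\colon \pi_1(Y_s,*_s)\to \GL_n(\Z)$ is determined by its pullback to $\pi_1(C_s,*_s)$, and if $\rho$ underlies a compact type $\Z$-PVHS on $Y_s$, then its pullback to $\pi_1(C_s,*_s)$ underlies the restricted $\Z$-PVHS on $C_s$. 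Because the restriction comes from a surjection of fundamental groups, the algebraic monodromy group is preserved, and so the compact type condition is preserved as well.

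Next, I would use the classifying map. After a further finite étale cover of $\cS$ to install a suitable level structure, the family $\cC\to \cS$ is pulled back via a morphism $f\colon \cS\to \cM_g$ from the universal curve $\cC_g\to \cM_g$. Fixing a lift of the base point gives a path in $\cC_g$ from $(C_0,*_0)$ to $(C_s,*_s)$, which induces an admissible identification $\pi_1(C_s,*_s)\simeq \Pi_g$ compatible with the identification $\pi_1(C_0,*_0)\simeq \Pi_g$ obtained by moving the base point in $\cC\subset \cY$. By the theorem just proved, the collection of representations $\Pi_g\to \GL_n(\Z)$ of compact type which underlie a $\Z$-PVHS on some curve in $\cM_g$ is finite up to conjugation. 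Pulling back via $f$, the restricted representations $\pi_1(C_s,*_s)\to \GL_n(\Z)$ fall into finitely many conjugacy classes, and since each $\rho$ on $\pi_1(Y_s,*_s)$ is uniquely determined by its restriction to $\pi_1(C_s,*_s)$, the same conclusion holds for representations of $\pi_1(Y_s,*_s)$.

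Finally, I would reconcile the notion of identification. The corollary only asks that the identifications $\pi_1(Y_0,*)\simeq \pi_1(Y_s,*)$ come from moving the base point in $\cY$; this is more permissive than admissibility in the curve sense. The paths in $\cC\subset \cY$ used above do produce such identifications of $\pi_1(Y_s)$'s, so the finite list of conjugacy classes in $\pi_1(Y_0)$ descends directly. I expect the main technical obstacle to be the bookkeeping of étale covers and base changes: verifying that each cover used (to produce the relative curve, to trivialize level structure, or to make the classifying map $f$ well-defined) does not lose track of the identification of $\pi_1(Y_0,*)$, and that the compact type condition on the restricted monodromy genuinely matches that on $Y_s$. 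Once these compatibilities are confirmed, the corollary follows at once from the theorem.
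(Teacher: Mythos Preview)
Your proposal is correct and follows essentially the same approach as the paper: the paper's proof is a single sentence referring back to the discussion at the start of the section, which already explains that one slices $\cY$ by hyperplanes, applies the Lefschetz theorem to reduce to a relative curve $\cC\to\cS$ (after a finite Zariski cover), and then passes to the universal curve $\cC_g\to\cM_g$. You have simply spelled out in more detail the relative Bertini construction, the preservation of the compact type condition under restriction, and the bookkeeping of identifications that the paper leaves implicit.
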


\begin{proof} This follows from the discussion at the beginning of the section, using the Lefschetz hyperplane theorem. 
\end{proof}

\section{Douady spaces of polarized distribution manifolds}

In this section we abstract some key elements of Hodge
manifolds, especially in the case where $\Gamma$ is cocompact.

\begin{definition} A {\it distribution manifold} $(X,\Xi)$ is a compact,
complex manifold $X$, together with a holomorphic subbundle
$\Xi\subset TX$ of its tangent bundle (i.e. a holomorphic 
distribution).\footnote{We do not require the distribution to be integrable.}

Let $L\to X$ be a holomorphic line bundle and let $h$ be a Hermitian 
metric on $L$. We say that $(L,h)$ is {\it positive} on $(X,\Xi)$ if 
the $(1,1)$-form 
$\omega_L:=\frac{i}{2\pi}\partial\overline{\partial}\log h$ 
satisfies $\omega_L\big{|}_{\Xi}>0$. We call $(L,h)$ a {\it polarization}
of the distribution manifold $(X,\Xi)$. \end{definition}

We now recall fundamental results on the analogues of
the Hilbert and Chow varieties for complex manifolds and 
analytic spaces.

\begin{definition} An {\it analytic cycle} on $X$ is a finite
formal $\Z$-linear combination $\sum_i n_i[Z_i]$ of 
irreducible, closed, reduced analytic subspaces
$Z_i\subset X$ of a fixed dimension. 
An analytic cycle is {\it effective} if $n_i\geq 0$. \end{definition}

We have then the following fundamental result of Barlet, 
see \cite{barlet}.

\begin{theorem}\label{barlet} The effective analytic 
cycles on $X$ are parameterized by a complex
analytic space.
\end{theorem}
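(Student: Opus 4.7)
The plan is to reduce the global statement to local structure theory, following Barlet's original approach. Since the space of effective cycles decomposes by total cohomology class (or by degree with respect to any fixed ample/positive line bundle), and each such stratum will be a countable union of components, it suffices to fix a compact set in $X$ and a bounded complexity and construct an analytic local model for the cycle space near a given effective cycle $Z_0 = \sum_i n_i [Z_{0,i}]$ of fixed dimension $n$.

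First, I would set up a local \emph{écaille} (scale) structure: cover the support of $Z_0$ by finitely many coordinate charts $(U_\alpha, \varphi_\alpha)$ in $X$ with the property that in each chart, the composition of $Z_0$ with the projection $\varphi_\alpha \colon U_\alpha \to \Delta^n \times \Delta^{N-n}$ onto the first factor is a proper, finite surjective map of some degree $k_\alpha$ (counted with the multiplicities $n_i$). Near $Z_0$, any nearby cycle $Z$ still enjoys this property, and the restriction of $Z$ to $U_\alpha$ is therefore encoded by a holomorphic map
\[
f_{\alpha,Z}\colon \Delta^n \longrightarrow \mathrm{Sym}^{k_\alpha}(\Delta^{N-n}),
\]
namely its \emph{fork}: the fiber over $x \in \Delta^n$ is the $0$-cycle $Z \cap \varphi_\alpha^{-1}(\{x\} \times \Delta^{N-n})$. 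The space of such holomorphic maps is a Banach analytic space (by standard Banach space arguments for bounded holomorphic maps between polydisks with values in a finite-dimensional analytic target), and the constraint that the map extends continuously across the boundary and matches the prescribed behavior near $\partial \Delta^n$ cuts out a \emph{finite-dimensional} analytic subspace $\cB_\alpha$, yielding a local analytic model for cycles near $Z_0$ in the chart $U_\alpha$.

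Next, I would glue. Each cycle $Z$ near $Z_0$ yields a collection $(f_{\alpha,Z})_\alpha$, so one gets a natural map from a neighborhood of $Z_0$ in the cycle space to $\prod_\alpha \cB_\alpha$, landing inside the subspace where the local forks agree on overlaps. The hardest step, which I expect to be the main obstacle, is showing that the transition maps between distinct \emph{écailles} describing the same cycle are holomorphic and that the resulting glued space is a finite-dimensional analytic space, not merely a Banach analytic one. This is the content of Barlet's theorem on the holomorphy of the trace form / direct image of cycles: given two transverse projections, the two associated forks determine each other via a symmetric-function computation whose output depends holomorphically on the input, because the elementary symmetric functions of the fiber coordinates in one projection extend to holomorphic functions on the source of the other projection via the trace.

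Finally, once the local patches are consistent, one obtains a Hausdorff complex analytic space structure on the cycle space near every $Z_0$; by exhausting $X$ by compacta and bounding the degree of cycles, one obtains the countable union of analytic spaces locally of finite type. The polarization $(L,h)$ will not yet enter this abstract theorem, but in subsequent sections it provides the \emph{degree} function needed to cut out finite-type pieces of the cycle space when one specializes to period images tangent to the Griffiths distribution.
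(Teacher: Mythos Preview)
The paper does not supply its own proof of this statement: it is quoted as a foundational result of Barlet, with a citation, and then used as a black box in the subsequent arguments about parallel Barlet and Douady spaces. So there is no ``paper's proof'' to compare your proposal against; the paper simply imports the theorem.

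That said, your sketch does follow the outline of Barlet's original construction via \emph{\'ecailles} and is broadly correct in spirit. One point deserves correction: in a single scale $U_\alpha$, the space of holomorphic maps $\Delta^n \to \mathrm{Sym}^{k_\alpha}(\Delta^{N-n})$ is genuinely infinite-dimensional (Banach analytic), and no boundary condition on a single scale cuts it down to finite dimensions --- nearby cycles need not agree with $Z_0$ on any open set, so ``matching prescribed behavior near $\partial\Delta^n$'' is not the mechanism. The passage to a locally finite-type analytic space occurs only after gluing: compactness of the support of $Z_0$ means finitely many scales suffice, and the compatibility conditions among them (precisely the holomorphy of the change-of-scale maps you flag as the hard step) are what force the global parameter space near $Z_0$ to be finite-dimensional. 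Your closing remark about the polarization $(L,h)$ not entering here, but supplying the degree function later, correctly anticipates how the paper uses this theorem in Theorem~\ref{properness}.
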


We call a connected component $\mathfrak{B}$ 
of this analytic space a {\it Barlet space}.
Unlike the Hilbert
scheme, the connected components
may have infinitely many irreducible
components, see Remark
\ref{hironaka}.

\begin{definition} Let $(X,\Xi)$ be a distribution manifold. A {\it horizontal Barlet space} $\mathfrak{B}^{\Xi}$ of $(X,\Xi)$ is a connected component
of the sublocus of $\mathfrak{B}$ defined by the following property: \begin{align*} \textstyle \sum_i n_i&[Z_i]\in
\mathfrak{B}^{\Xi}\textrm{ iff there is a dense open set }  \\ & Z^o\subset \cup_iZ_i\textrm{ for which }TZ^o\subset \Xi. \end{align*} \end{definition}

This is visibly a locally closed analytic
condition on the Barlet space. In fact, much more is true:

\begin{theorem}\label{properness} Let $(X,\Xi,L,h)$ be a polarized distribution
manifold. Any horizontal Barlet space $\mathfrak{B}^{\Xi}$ is
a proper analytic space.

Furthermore, there are only finitely many Barlet spaces parameterizing cycles of pure codimension $d$ on which $c_1(L)^{n-d}$ is bounded.\end{theorem}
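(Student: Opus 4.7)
The plan is to reduce both claims to Bishop's compactness theorem for cycles of bounded volume, using positivity of $\omega_L$ on $T^{||}$ to reinterpret the $L$-degree of a parallel cycle as a genuine volume. For a parallel cycle $Z=\sum_i n_i[Z_i]$ of pure dimension $k=n-d$ with dense open parallel locus $Z^o\subset \bigcup_i Z_i$ satisfying $TZ^o\subset T^{||}$, the form $\omega_L|_{Z^o}$ is K\"ahler, so $\tfrac{1}{k!}\omega_L^k|_{Z^o}$ is a positive volume form and
\[
\Vol_{\omega_L}(Z) \,=\, \tfrac{1}{k!}\int_Z \omega_L^k \,=\, \tfrac{1}{k!}\,[Z]\cdot c_1(L)^{n-d}.
\]
Fixing any Hermitian metric $\omega_X$ on the compact manifold $X$, pointwise linear algebra on $k$-dimensional complex subspaces of $T^{||}$ produces a constant $C>0$ with $\Vol_{\omega_X}(Z)\le C\,[Z]\cdot c_1(L)^{n-d}$ for every parallel cycle of codimension $d$. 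Combined with Bishop's compactness theorem and the lower semicontinuity of volume, this shows that the locus $\cK_{D,d}$ of cycles of codimension $d$ with $\omega_X$-volume at most $D$ is a compact analytic subspace of the Barlet space, and every parallel cycle of codimension $d$ with $L$-degree at most $D$ lies in $\cK_{CD,d}$.

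The crux is then that the parallel condition is closed inside $\cK_{D,d}$: a limit of parallel cycles is parallel. At a generic smooth point $p$ of an irreducible component of a limit cycle $Z_0$, the nearby cycles $Z_t$ are holomorphic graphs in $T^{||}$-adapted local coordinates, and their tangent spaces, all contained in $T^{||}$, converge to $T_pZ_0$; continuity of the distribution $T^{||}$ then forces $T_pZ_0\subset T^{||}_p$. Handling carefully the convergence of cycles of varying supports and multiplicities is the main technical obstacle here.

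Once closedness is established, both assertions follow directly. For properness: any parallel Barlet space $\mathfrak{B}^{||}$ is a connected component of the parallel locus in some Barlet component $\mathfrak{B}$; the cycle class, and hence $[Z]\cdot c_1(L)^{n-d}$, is locally constant on $\mathfrak{B}$, so it takes a single value $D_0$, and $\mathfrak{B}^{||}$ lies inside the closed parallel sublocus of $\cK_{CD_0,d}$. As a connected component of a compact analytic space, $\mathfrak{B}^{||}$ is itself compact. For finiteness: the closed parallel sublocus of $\cK_{CD,d}$ is a compact analytic space and hence has finitely many connected components, each precisely a parallel Barlet space parameterizing cycles of codimension $d$ with $L$-degree at most $D$.
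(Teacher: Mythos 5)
Your outline matches the paper's structure — reinterpret the $L$-degree of a parallel cycle as an honest volume via $\omega_L|_{T^{||}}>0$, invoke a Bishop-type compactness theorem, then establish that the parallel condition is closed under cycle limits — but the step you yourself flag as ``the main technical obstacle'' is exactly where the paper does something genuinely different, and as written your argument has a gap there.

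Your closedness argument tries to control tangent planes pointwise: near a generic smooth point $p$ of the limit $Z_0$, express nearby $Z_t$ as (branched) graphs, argue that their Gauss maps converge to $T_pZ_0$, and conclude $T_pZ_0\subset T^{||}_p$ by continuity. This is plausibly salvageable (Cauchy estimates for the branches of the local Weierstrass cover do control the derivatives), but it is delicate when components merge, multiplicities jump, or $Z_t$ is not a genuine graph near $p$; and it only directly gives the condition at isolated generic points, which still needs to be promoted to the required dense open set. The paper sidesteps the whole issue with a global integral argument: introduce the one-parameter family of genuine Hermitian metrics $\widetilde g_N = \omega_L(\cdot,J\cdot) + N g^\perp$, where $g^\perp$ is a background metric restricted to a smooth complement of $T^{||}$. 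For a parallel cycle $Z$ one has $\vol_{\widetilde g_N}(Z) = \vol_L(Z) = [Z]\cdot c_1(L)^{n-d}$, \emph{independent of $N$}. Passing to the limit using convergence of currents of integration (Fujiki), the equality $\vol_{\widetilde g_N}(Z^{(\infty)}) = \vol_L(Z^{(\infty)})$ persists for every admissible $N$; but if $Z^{(\infty)}$ failed to be parallel on a set of positive measure, the derivative $\tfrac{d}{dN}\vol_{\widetilde g_N}(Z^{(\infty)}) = \tfrac{1}{(k-1)!}\int_{Z^{(\infty)}}\omega_{\widetilde g_{N_0}}^{k-1}\wedge\omega^\perp$ would be strictly positive. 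This replaces your pointwise Gauss-map analysis with a single integral identity and automatically handles all degeneration phenomena. As a minor correction, $\cK_{D,d}$ (cycles of $\omega_X$-volume $\leq D$) is a compact subset of the Barlet space but not an analytic subspace, since volume is not an analytic function; what you actually use is only its compactness together with local constancy of $[Z]\cdot c_1(L)^{n-d}$ on Barlet components, which is fine.
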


\begin{proof} Let $g$ be an arbitrary hermitian metric on $X$,
for instance, we can construct $g$ via a partition of unity.
Define a smooth distribution $\Xi^{\perp}\subset TX$ by
$\Xi^{\perp}_x := (\Xi_x)^{\perp g}$. Then, we have a $g$-orthogonal splitting 
$TX = \Xi\oplus \Xi^{\perp}$ as smooth $\C$-vector bundles.
Let $g^\perp$ denote the
degenerate, semi-positive hermitian form on $TX$ which is
defined by $(0,g\big{|}_{\Xi^{\perp}})$ with respect the
decomposition $TX =\Xi\oplus \Xi^{\perp}$.

Let $N>0$ and define a symmetric tensor by $$\widetilde{g}(v,w):=
\omega_L (v,Jw)+ Ng^\perp(v,w)\in S^2T^*X.$$
We claim that $\widetilde{g}$
is a Hermitian metric on $X$ for sufficiently large $N$. This 
follows from $\omega_L(v ,Jw)$
being positive-definite on $\Xi$, $g^\perp$ vanishing on $\Xi$ and
being positive definite on $\Xi^{\perp}$, and compactness of $X$.

For any codimension $d$ analytic cycle 
$Z:=\sum_i n_i[Z_i]\in \mathfrak{B}^{\Xi}$, define $$\vol_L(Z) 
= \sum_i n_i\int_{Z_i} c_1(L)^{n-d} = [Z]\cdot c_1(L)^{n-d}.$$
Observe that $c_1(L)^{n-d}$ is pointwise positive on $Z_i^o\subset Z_i$. 
Furthermore $\vol_L(Z)$ is constant
on a connected component of $\mathfrak{B}^{\Xi}$ because it is given
as the intersection number on the right. Next,
we define $$\vol_{\widetilde{g}}(Z):=\sum_i n_i \int_{Z_i} 
\vol_{\widetilde{g}|_{Z_i}}$$ and observe 
$\vol_{\widetilde{g}}(Z)=\vol_L(Z)$ because 
$\widetilde{g}(\cdot,\cdot)\big{|}_{\Xi} = \omega_L(\cdot,J\cdot)\big{|}_{\Xi}$ and $TZ_i^o\subset \Xi$.
Thus, $X$ admits a hermitian metric $\widetilde{g}$ in which
$\vol_{\widetilde{g}}(Z)$ is constant on a connected component of
$\mathfrak{B}^{\Xi}$, equal to $[Z]\cdot c_1(L)^{n-d}$.

Let $Z^{(1)},Z^{(2)},\cdots$ be a countable sequence of effective
analytic cycles in (possibly different) connected components 
$\mathfrak{B}^{\Xi,(i)}$, for which $\vol_L=\vol_{\widetilde{g}}$
remains bounded. By a theorem of Harvey-Schiffman 
\cite[Thm.~3.9]{harvey-shiffman},
we can extract a convergent subsequence
that converges to an effective analytic cycle $Z^{(\infty)}$
for which $\vol_{\widetilde{g}}(Z^{(i)})$ converges to 
$\vol_{\widetilde{g}}(Z^{(\infty)})$. Such convergence
defines the topology on $\fB$.

By \cite[Prop.~2.3]{fujiki}, the $Z^{(i)}$ converge in the
sense of currents of integration to 
$Z^{(\infty)}$, and in particular, the integrals
$\int_{Z^{(i)}}\omega_L^{n-d}$ must converge
to $\int_{Z^{(\infty)}} \omega_L^{n-d}$ and so remain bounded.
Additionally, we have $\vol_{\widetilde{g}}(Z^{(\infty)})=
\vol_L(Z^{(\infty)})$ and this
equality holds for any choice $N$ in the definition
$\widetilde{g} = \omega_L+Ng^\perp$. We conclude that
there is a Zariski-dense open subset $Z^o\subset Z^{\infty}$
for which $TZ^o\subset (\Xi^{\perp})^{\perp}=\Xi$,
as otherwise $\vol_{\widetilde{g}}(Z^{(\infty)})$ would
increase as $N$ increases.

Thus, the union of all components 
$\mathfrak{B}^{\Xi}$ for which $c_1(L)^{n-d}$
is bounded is sequentially compact. Hence each component of $\mathfrak{B}^{\Xi}$  is a compact analytic space,
and there are only finitely many
components with bounded $\vol_L$. The theorem follows.
\end{proof}

\begin{remark}\label{hironaka}
In general, a Barlet space 
of a compact analytic 
space $X$ need not have finitely
many irreducible components, even if
$X$ is a smooth, proper $\C$-variety.
A famous counterexample is due to Hironaka: let
$C,D\subset M$ be two smooth curves in a smooth projective $3$-fold
$M$, with $C\cap D = \{p,q\}$. We can consider the variety
$$\hM:=Bl_{\hC} Bl_D (M\setminus q)\cup Bl_{\hD} Bl_C(M\setminus p),$$
that is, we blow up $M$ along $C$ and $D$, but in opposite orders
at $p$ and $q$. If $F$ is a fiber of one of the exceptional
divisors, the Barlet space containing $F$ is an infinite chain of curves:
$F$ admits a deformation to a cycle of the form $F+(Z_1+Z_2)$
where $Z_1$ and $Z_2$ are the strict transforms of the fibers
at $p$ and $q$ of the first blow-up in the second blow-up.
We may further deform to $F+n(Z_1+Z_2)$ 
for any $n\in \N$.

As a compact
complex manifold, $\hM$ admits
a hermitian metric $h$. Following Theorem
\ref{properness}, one many consider
the space of  
$d$-cycles $Z$ of bounded volume 
${\rm vol}_h(Z)\leq C$,
and indeed this is compact.
But it is only semi-analytic---for example
as $F$ deforms in its connected component of $\fB$, the volume
will increase until one hits the
``cut-off'' $C$. So
the compact ${\rm vol}_h(Z)\leq C$ Barlet space 
is only semianalytic.
This does not present an issue when 
$h$ is associated to a closed $2$-form,
i.e. defines a K\"ahler
metric, because the volume is then
locally constant on $\fB$. Indeed, this
is the key point in Fujiki's work
\cite[Proof of Prop.~4.1]{fujiki}.
Theorem \ref{properness} is a generalization
of the same principle to distribution manifolds.
\end{remark}

We now consider the analogue of Hilbert spaces. A {\it Douady space} of $X$ is an
analytic space $\fD$ parametrizing flat 
families of closed analytic subspaces of $X$, see \cite[\S 9.1]{douady} for a precise definition. By the main theorem of Douady \cite[pp.~83-84]{douady}, there is a universal analytic subspace
$\mathcal{Z}\subset \fD\times X$ which is flat over $X$,
and any flat family parameterized by a base $M$
is the pullback along an analytic classifying morphism $M\to \fD$.

Given a sub-analytic space $Z\subset X$,
we can define an effective analytic cycle $[Z]\in \fB$
called the {\it support}.
It is the positive linear combination $\sum_{i} n_i[Z_i]$ where $Z_i$
are the irreducible components of the reduction of $Z$ that have
top-dimensional set-theoretic support,
and $n_i$ is the generic order of
non-reducedness of $Z$ along $Z_i$,
see \cite[Sec.~3.1]{fujiki}. There is an analogue, the 
{\it Douady-Barlet morphism} $[\cdot ]\colon \fD\to \fB$, of the Hilbert-Chow morphism, sending an analytic space to its support.

\begin{theorem}[{\cite[Prop.~3.4]{fujiki}}]\label{fujiki}
Suppose that $\fB$ is a compact 
analytic subspace of the Barlet space.
Then, the Douady-Barlet morphism is proper, 
on each component $\fD$ of the Douady space, 
of analytic spaces whose support
$[\cdot]$ lies in $\fB$. 
\end{theorem}

\begin{proof}
As stated, \cite[Prop.~3.4]{fujiki} only applies 
when $\fB$ is a compact irreducible component
of the Barlet space, but the
exact same proof applies to any
compact analytic subspace of the Barlet space.
\end{proof}

\begin{definition}A {\it horizontal Douady space} $\fD^{\Xi}$ is a connected component of
the sublocus of $Z\in \fD$ for
which $[Z]\in \fB^{\Xi}.$ \end{definition}

\begin{remark} It is important to note that the Zariski tangent
space of $Z\in \fD^{\Xi}$ is not required to lie in $\Xi$.
For instance, consider a flat family $\cZ^* \to C^*=C\setminus 0$
of complex submanifolds of 
$X$, with the tangent bundle $T\cZ_t$ lying in $\Xi$
for all $t\in C^*$. The flat limit $Z_0$
over the puncture might be nilpotently thickened in directions outside 
of $\Xi$, if the total space of the family itself does not have a tangent 
bundle $T\cZ^*$ lying in $\Xi$, and this could even occur generically along $Z_0$.
\end{remark}

\begin{corollary} Let $(X,\Xi,L,h)$ be a polarized distribution
manifold. Then, each connected component of $\fD^{\Xi}$ is a proper
analytic space. \end{corollary}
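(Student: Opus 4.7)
The plan is to deduce the corollary formally from the two main results of the section: the compactness of parallel Barlet spaces (Theorem \ref{properness}) together with the properness of the Douady-Barlet morphism $[\cdot]\colon \fD \to \fB$ on each connected component of the Douady space (Theorem \ref{fujiki}). Concretely, I fix a connected component $\fD_0^{||}$ of $\fD^{||}$, work inside the ambient Douady component $\fD_0$ containing it, and produce a proper morphism from $\fD_0^{||}$ to a compact analytic space.

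Since $\fD_0^{||}$ is connected and the Douady-Barlet morphism is continuous, the image $[\fD_0^{||}]$ is connected and, by the very definition of parallel Douady space, lies in the parallel Barlet locus. Hence $[\fD_0^{||}] \subset \fB^{||}$ for a single parallel Barlet space $\fB^{||}$, which is compact by Theorem \ref{properness}. Theorem \ref{fujiki} gives properness of $[\cdot]\big|_{\fD_0}$, and its restriction to the closed subspace $\fD_0^{||} \subset \fD_0$ lying over $\fB^{||}$ is again proper. A proper morphism into a compact analytic space has compact source, so $\fD_0^{||}$ is proper, as required.

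The one step that I expect requires explicit justification, and that I would write out first, is the closedness of $\fD_0^{||}$ inside $\fD_0$. This reduces to closedness of the sublocus $[\cdot]^{-1}(\fB^{||}) \cap \fD_0$ in $\fD_0$, which in turn follows once one knows that $\fB^{||}$ is closed in $\fB$. The latter is essentially contained in the proof of Theorem \ref{properness}: a limit in $\fB$ of cycles parameterized by $\fB^{||}$ remains parallel, since the tangency condition $TZ^o \subset T^{||}$ passes to limits in the sense of currents of integration (as exploited in that argument via the metric $\widetilde{g}$), and the limit stays in the same connected component $\fB^{||}$ by connectedness of any specialization curve. Once closedness is granted, the compactness argument above completes the proof, and the corollary follows.
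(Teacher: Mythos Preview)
Your proposal is correct and follows exactly the route the paper takes: combine the properness of the Douady--Barlet morphism (Theorem~\ref{fujiki}) with the compactness of the parallel Barlet space (Theorem~\ref{properness}). The paper's proof is a one-liner citing these two results, whereas you have spelled out the closedness of $\fD_0^{||}$ in $\fD_0$ more carefully; this extra care is warranted but not essential, since the argument in Theorem~\ref{properness} already shows the parallel condition is closed under specialization.
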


\begin{proof} This follows directly from Theorem \ref{fujiki}
and Theorem \ref{barlet}. \end{proof}

\begin{theorem}\label{griffiths-algebraic}
Let $Z\in \fD^{\Xi}$ lie in a horizontal Douady space.
Then $Z$ is projective, and $L\big{|}_Z$ is an ample line bundle.
\end{theorem}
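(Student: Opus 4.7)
The plan is to establish that $L|_Z$ is ample; projectivity of $Z$ then follows from the Kodaira--Grauert embedding theorem for complex spaces. Since ampleness of a line bundle on an analytic scheme is equivalent to ampleness on its scheme-theoretic reduction, and on a reduced space to ampleness on each irreducible component, I would first reduce to the case where $Z$ is reduced and irreducible.

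The next step is to upgrade the defining condition of $\fD^{||}$---namely that $TZ^o\subset T^{||}$ on some dense open $Z^o$---to the stronger assertion that $TZ^{\mathrm{sm}}\subset T^{||}$ on the entire smooth locus of $Z$. This is a continuity argument: the Gauss map of $Z^{\mathrm{sm}}$ into the Grassmannian bundle of $(\dim Z)$-planes in $TX|_{Z^{\mathrm{sm}}}$ is holomorphic, and being contained in the holomorphic subbundle $T^{||}$ cuts out a closed analytic subset of this Grassmannian, so the condition extends from the dense open $Z^o\cap Z^{\mathrm{sm}}$ to all of $Z^{\mathrm{sm}}$.

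Consequently $\omega_L|_{Z^{\mathrm{sm}}}$ is a smooth, closed, positive $(1,1)$-form---a K\"ahler form on the smooth locus of the reduced compact complex space $Z$. I would then invoke Grauert's generalization of the Kodaira embedding theorem to conclude that $L|_Z$ is ample. In the setting of period maps this is essentially the argument Griffiths carries out in \cite[Thm.~9.7]{griffithsIII}, while the subtleties caused by singularities of the image are addressed by a different method in \cite[Thm.~1.1]{bakker-brunebarbe-tsimerman}.

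I anticipate the handling of singular points of $Z$ to be the main obstacle. At a smooth point one can exhibit a strictly plurisubharmonic extension of the local weight of $h|_Z$ to a neighborhood in $X$ by adding a large positive multiple of the squared Euclidean norm in directions transverse to $Z$---these transverse directions are genuinely transverse to $T^{||}$ at the point in question. At a singular point, however, the Zariski tangent space of $Z$ can strictly contain $T^{||}$ (cuspidal examples exhibit this phenomenon), so the naive ambient modification need not produce a strictly plurisubharmonic function. I would therefore expect to need either Grauert's original Stein-theoretic proof on the total space of the dual line bundle (which is less sensitive to the tangent geometry of $Z$ at its singular points) or a descent of ampleness along the normalization $\nu\colon Z^\nu\to Z$, using that the composite $Z^\nu\to X$ still has horizontal derivative on $Z^{\nu,\mathrm{sm}}$ by the same continuity argument applied to the branches of $Z$ through each singular point.
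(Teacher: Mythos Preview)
Your continuity argument that $TZ^{\mathrm{sm}}\subset T^{||}$ is correct and is the top case of a stronger statement the paper proves (Lemma~\ref{tan-strata}): \emph{every} open stratum $S$ of the singular stratification of $Z$ satisfies $TS\subset T^{||}$. The paper needs this stronger form, because its endgame is different from yours.

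The genuine gap in your plan is the invocation of Grauert's embedding theorem. As the paper itself flags just after quoting \cite[Thm.~9.7]{griffithsIII}, the hypotheses of Grauert's theorem need not hold at singular points of $Z$, precisely because the Zariski tangent space there can fall outside $T^{||}$. You anticipate this, but neither proposed fix is complete: ampleness does descend along the finite map $Z^\nu\to Z$, but $Z^\nu$ may still be singular, so you face the same problem one level up; and the Stein-theoretic version of Grauert still requires a weak $1$-completeness condition on the total space of $L^{-1}$ that positivity along $T^{||}$ alone does not supply at the bad points.

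The paper avoids Grauert entirely. It first shows $Z$ is Moishezon by applying the Siu--Demailly solution of the Grauert--Riemenschneider conjecture to a resolution $\widetilde Z\to Z$ (here positivity of $\omega_L$ on a dense open suffices to make the pulled-back $L$ big). It then checks the Nakai--Moishezon criterion $L^d\cdot V>0$ for every $d$-dimensional subvariety $V\subset Z$: since $V$ is generically contained in some singular stratum $S$, and $TS\subset T^{||}$ by the stratum lemma, the curvature integral $\int_V \omega_L^d$ is strictly positive. Finally it appeals to Koll\'ar's theorem \cite[Thm.~3.11]{kollar} that a Moishezon space carrying a line bundle satisfying Nakai--Moishezon is projective. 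So the key new ingredient you are missing is the combination Moishezon $+$ Nakai--Moishezon $+$ Koll\'ar, together with the extension of your Gauss-map argument from $Z^{\mathrm{sm}}$ to all lower strata.
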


\begin{proof} 
A simplification of the
proof in \cite[Thm.~1.1]{bakker-brunebarbe-tsimerman}
applies. It follows from Siu and Demailly's resolution
\cite{siu1, siu2, demailly} of the 
Grauert-Riemenschneider conjecture, applied
to a resolution of $Z$, that $Z$ is Moishezon.
Next, we have:

\begin{lemma}\label{tan-strata}
Let $S$ be an smooth, locally-closed
stratum of the Whitney stratification
of $\cup_i Z_i$. Then $TS\subset \Xi$. \end{lemma}

\begin{proof} By assumption, there is a dense open $Z^o\subset \cup_i Z_i$
for which $TZ^o\subset \Xi$. We claim that $TS\subset \overline{TZ^o}$
lies in the Zariski closure of $TZ^o$ in $TX$. Then the result will follow 
as $\Xi$ is Zariski-closed in $TX$. 

Let $Z_i$ be an irreducible component containing $S$. Consider the map
$d\pi_i\colon T\wZ_i \to TX$ from a resolution. Let $\wZ_i^o:=\pi_i^{-1}(Z_i\cap Z^o).$ As $d\pi_i$ is continuous
and $d\pi_i(T\wZ_i^o)\subset TZ^o$, we have ${\rm im}(d\pi_i)\subset \overline{TZ^o}$.
The claim follows if we can show ${\rm im}(d\pi_i)\supset TS'$, for a dense
open $S'\subset S$, i.e. can we
lift a generic tangent vector of $S'$ to $\wZ_i$.
This follows from the generic smoothness of
$\pi_i\big{|}_{\pi_i^{-1}(S)^{\rm red}}$.
\end{proof}

Lemma \ref{tan-strata} implies that we have $L^d\cdot V>0$
for any subvariety $V$ of dimension $d$, because
$TV$ is generically contained
in the tangent bundle of some singular stratum $S$
and $\frac{i}{2\pi}\partial \overline{\partial} \log(h)$
is positive definite on $\Xi$. So $Z$ satisfies
the Nakai-Moishezon criterion. Then, a theorem of
Koll\'ar \cite[Thm.~3.11]{kollar}
implies that $Z$ is projective.
\end{proof}

\begin{definition} Let $\fC\subset (\fD^{\Xi})^{\rm red}$
be an irreducible component of a horizontal Douady space.
For $Z_t\in \fC$ let $L_t:=L\big{|}_{Z_t}$.

We say that $\fC$ is
{\it locally $L$-determined} 
if there exists an analytic open
set $U\subset \fC$ for which 
$(Z_s,L_s)\not\simeq (Z_t,L_t)$
for all $s,t\in U$, $s\neq t$.
\end{definition}

\begin{theorem}\label{main1} Let $\fC$ be 
an irreducible component of the reduction
of the horizontal Douady space of $(X,\Xi,L,h)$,
which is locally $L$-determined.
Then $\fC$ is Moishezon.\end{theorem}

\begin{proof} Let $u\colon \fZ^{\Xi}\to \fC$ be the universal flat family
and let $\fL\to \fZ^{\Xi}$ be the universal polarizing line bundle.
For any fixed $n\in \N$, the locus $\fC_n\subset \fC$ of projective
(Thm.~\ref{griffiths-algebraic}) schemes $Z\in \fC$
on which $nL=n\mathfrak{L}\big{|}_Z$ is not very ample is closed.
Taking the sequence
 $$\cdots\subset \fC_{3!}\subset \fC_{2!}\subset \fC_{1!}\subset \fC$$
gives a nested sequence of closed analytic subspaces.
The intersection is empty since for all $Z\in \fC$, there is
some $n_Z\in \N$ for which $n_ZL$ is very ample, and $n_Z\mid i!$
for all $i\geq n_Z$. We conclude some $\fC_n$ is empty
for large enough $n$, so $|nL|$ is a projective embedding
for all $Z\in \fC$.

Furthermore, the locus on which $H^i(Z,nL)$ jumps in dimension
is also closed, and so by the same argument, we may assume
$h^i(Z,nL)=0$ for all $i>0$ and all $Z\in \fC$. Then $u_*(n\fL)$
is a vector bundle of rank $$N+1:=\chi(Z,nL)= h^0(Z,nL).$$ It is a vector
bundle because $\chi$ is constant in (analytic) flat families.

Let $\bP\to \fC$
be the projective frame bundle of $u_*(n\fL)$, a principal
holomorphic $J=\PGL(N+1)$-bundle. Points of $\bP$ correspond
to some $Z\subset X$, and a basis of sections of
$H^0(Z,nL)$, modulo scaling. We
have an analytic map $$\phi\colon \bP\to \cH$$ where
$\cH\subset {\rm Hilb}(\bP^N)$ is the component of the Hilbert
scheme with Hilbert polynomial $\chi$, sending
$(Z,[s_0:\cdots:s_N])\in \bP$
to the closed subscheme of $\bP^N$ with the given embedding.
Note $\cH$ is projective and $\phi$ is equivariant 
with respect to
the natural $J$-action on both sides. 

We have assumed that
$\fC$ is locally $L$-determined: 
There exists
some analytic open $U\subset \fC$ for which 
$(Z_s,L_s)\not\simeq (Z_t,L_t)$
for all $s,t\in U$, $s\neq t$. This implies that $(Z_t,nL_t)\not\simeq (Z_s,nL_s)$
for all $s\neq t$ in a possibly smaller neighborhood.
Thus, the $J$-orbits in $\cH$ 
corresponding to $(Z_t,nL_t)$ are all distinct
in an analytic open set. 
We now apply Lemma \ref{orbit-mero} below to conclude that $\fC$ is Moishezon.
\end{proof}

\begin{lemma}\label{orbit-mero}
Let $\fC$ be a compact, complex manifold
and let $p \colon \bP\to \fC$ be a principal $J$-bundle, 
for $J$ a complex algebraic group. Let $\cH$ be a projective
variety with an algebraic $J$-action and suppose that
$\phi\colon \bP\to \cH$ is a $J$-equivariant holomorphic
map, such that there exists an analytic open
set $U\subset \fC$ for which the map
\begin{align*} 
\fC &\to \cH/J \\
u &\mapsto \phi(p^{-1}(u)) \end{align*}
is injective on $U$. Then $\fC$ is Moishezon.
\end{lemma}

We view $\cH/J$ as a set of $J$-orbits in the above lemma,
as it may not have the structure of an algebraic variety.

\begin{proof}

Consider the locus of orbit closures
$\mathrm{O}:=\{\overline{J\cdot x}\,\big{|}\,x\in \cH\}
\subset {\rm Chow}(\cH)$, viewed as pure-dimensional
cycles on $\cH$. Note that the $J$-orbit closures
will in general have different dimensions and may lie
in different components of the Chow variety of $\cH$.
A point $
\overline{J\cdot x}\in \mathrm O$ uniquely 
determines a $J$-orbit, 
since a $J$-orbit is uniquely recovered from the 
closure of the orbit of a general point 
$x'\in \overline{J\cdot x}$.

Since the action of $J$ is algebraic on $\cH$, the space 
$\mathrm O$ is stratified by algebraic varieties
$$\mathrm O = \mathrm O_1\sqcup \cdots \sqcup \mathrm O_m$$ with each $\mathrm O_j$ an irreducible,
locally closed set of some component ${\rm Chow}_j(\cH)$ of the Chow variety.
Let $\cH_j\subset \cH$ be the locally closed set of points $x\in \cH$ for which
$\overline{J\cdot x}\in \mathrm O_j$.

Observe that $\cH = \cH_1\sqcup \cdots \sqcup \cH_m$ 
is a Zariski locally closed, $J$-invariant 
stratification of $\cH$. Pulling back this stratification
along $\phi$ gives a $J$-invariant stratification
of $\bP$, which in turn descends along $p$ to
an analytic Zariski locally closed
stratification of $\fC$. Thus, there is
an analytic Zariski closed set
$\fC'\subset \fC$ such that
$\phi(p^{-1}(\fC\setminus \fC'))\subset \cH_j$ 
for some stratum $\cH_j$.

Since $\bP$ is irreducible, we have $\phi(\bP)\subset \overline{\cH}_j$.
Observe that there is a rational map (a morphism on $\cH_j$)
\begin{align*} \psi\colon \overline{\cH}_j&
\dashrightarrow \overline{\mathrm O}_j \\
x&\mapsto \overline{J\cdot x}\end{align*}
with the closure of the latter taken in ${\rm Chow}_j(\cH)$,
which is projective.

Let $V\subset \fC$ be a small, analytic open 
chart around any point in $\fC$.
There is a local analytic section of $\bP\big{|}_V\to V$,
call it $s_V$.
Then, $\phi\circ s_V\colon V\to \overline{\cH}_j$
is analytic and $\psi$ is rational, so the composition
$$\psi \circ \phi\circ s_V\colon 
V\dashrightarrow {\mathrm \oO}_j$$ is a meromorphic map.
Furthermore, since $\psi$ collapses $J$-orbits, 
and $\phi$ is $J$-equivariant, we conclude that this local 
meromorphic map is independent of choice of local
section $s_V$.
So these maps patch together to give a global
meromorphic map
$\alpha\colon \fC \dashrightarrow {\mathrm \oO}_j.$

Since $\alpha$ is meromorphic, by Hironaka,
there is a resolution of indeterminacy
$$\fC\xleftarrow{\beta} \widetilde{\fC}\xrightarrow{\gamma} 
{\mathrm \oO}_j$$ 
of $\alpha=\gamma\circ \beta^{-1}$ with $\beta$
bimeromorphic. Finally, for the analytic
open set $U$ in the statement of the lemma, we
have that the holomorphic map
$$\alpha\vert_{U\setminus \fC'}\colon 
U\setminus \fC'\to {\rm O}_j$$ is injective,
because $\cH_j/J={\rm O}_j$ (e.g.~as sets). We deduce
that the morphism $\gamma$ is generically finite
onto its image, which being closed in the projective
variety ${\rm \oO}_j$ is projective.
As the Stein factorization of $\gamma$ 
is finite over the image of $\gamma$, 
it is projective.
So $\fC$ is bimeromorphic to
a projective variety.
\end{proof}

\begin{remark} The assumption that $\fC$ is locally $L$-determined is necessary.
For instance, let $X$ be an arbitrary compact, complex manifold,
and consider the distribution manifold for which $\Xi=0$. It admits
a polarization by setting $L=\cO_X$ with $h$ the trivial metric.
Then, the Douady space of points in $X$ is a horizontal Douady space,
isomorphic to $X$ itself. But of course, $X$ need not be Moishezon,
so not all horizontal Douady spaces are Moishezon in this generality.
\end{remark}

\begin{metadefinition}\label{data of GAGA type} We define {\it data of GAGA type} 
on $X$ to be a collection of holomorphic data ${\rm Data}_X$
to which the GAGA theorem applies, upon restriction
to a projective scheme $Z\in \fD^{\Xi}$.
\end{metadefinition}

\begin{example} An example of data of GAGA type would be
${\rm Data}_X = (F^\bullet, \nabla)$ where $F^\bullet$
is a descending filtration of holomorphic vector bundles on
$X$ and $\nabla$ is a holomorphic connection on $F^0$.
For any horizontal analytic space $Z\in \fD^{\Xi}$,
the restriction of $F^\bullet$ to $Z$ is a filtration
$F^\bullet_Z$ of algebraic vector bundles, by Serre's GAGA theorem \cite{gaga}.

Similarly, GAGA
holds for vector bundles with flat
connection, by interpreting
flat connections as splittings
of the Atiyah sequence
\cite[I.2.3]{deligneed}. In
particular, 
the restriction of $\nabla$ to a
connection $\nabla_Z$ on $F^0_Z$
is an algebraic connection.
\end{example}

\begin{metatheorem}\label{meta} Let ${\rm Data}_X$ be data of GAGA type on $X$.
We say that an irreducible, reduced, closed
analytic subspace $\fD_0\subset \fD^{\Xi}$
is {\rm locally ${\rm Data}_X$-determined} 
if the isomorphism type of the restriction 
of this data to $Z\in \fD_0$ is determinative
in some analytic open set $U\subset \fD_0$:
$(Z_s,{\rm Data}_s)\not\simeq (Z_t,{\rm Data}_t)$ for all $s\neq t\in U$.

Then Theorem \ref{main1} still holds: $\fD_0$ is Moishezon.
\end{metatheorem}

\begin{proof}[Sketch.] By GAGA, the restriction of 
${\rm Data}_X$ to any
$Z\in \fD_0$ is algebraic data, denoted ${\rm Data}_Z$.
The general form of such algebraic data, together with $Z$,
is parameterized by an algebraic variety (adding rigidifying
data corresponding to an algebraic group action as necessary),
admitting an algebraic compactification
$\cH_{\rm Data}$. Then, we apply the same argument as
in Theorem \ref{main1} to the classifying map
\begin{align*} \fD_0&\dashrightarrow \cH_{\rm Data} \\
Z&\mapsto (Z,{\rm  Data}_Z)\end{align*}
to conclude that $\fD_0$ is Moishezon.
\end{proof}

\begin{corollary} \label{cvhs-embed}
Let $(X, \Xi, L, h)$ be a polarized
distribution manifold, endowed with
a tuple
$(F_i^\bullet, \nabla_i)_{1\leq i\leq k}$
of filtered flat vector bundles
on $X$.
Suppose that $\fD_0\subset \fD^\Xi$
is an irreducible, reduced,
closed analytic subspace of a horizontal
Douady space of $X$, which is locally
$(F_i^\bullet, \nabla_i)_{1\leq i\leq k}$-determined.
Then, $\fD_0$ is Moishezon.
\end{corollary}

\begin{proof}
The corollary is an instance
of Meta-Theorem \ref{meta}.
For the sake of explicitness, we will
concretely construct a compactification
$\cH_{\rm Data}$ of the parameter space
of relevant data of GAGA type.

Denote by $\pi:\fZ\rightarrow \fD_0$ 
the pullback of the universal flat 
family over $\fD^\Xi$ and 
$f:\fL\rightarrow \fZ$ the universal 
polarizing line bundle.  

For convenience of exposition,
we begin with just one filtered flat vector bundle
$(F^\bullet, \nabla)$ on $X$.
Let $\cH$ be the component of the Hilbert scheme that
$|nL|$ maps $Z$ into. Let 
$\pi_\cH\colon \fZ_\cH\to \cH$ be the universal
flat family over $\cH$.

The Hilbert polynomials
$P^\bullet$ of the vector bundles $F^\bullet_Z$
which arise from restricting $F^\bullet$ 
are constant along $Z\in \fD_0$ by flatness. 
We may choose
integers $m_p, n_p \gg 0$ for which any vector
bundle (even coherent sheaf)
with Hilbert polynomial $P^p$ over 
any $Z\in \cH$
is a quotient of the form $$(-m_pL)^{\oplus n_p}\onto F^p_Z.$$
For instance, choose $m_p$
uniformly over all of $\cH$ so that $F_Z^p(m_pL)$
is globally generated with vanishing higher cohomology.
Then for a fixed $n_p$, there is a surjection
$\cO_Z^{\oplus n_p}\onto F^p_Z(m_pL)$ 
corresponding to a basis of global sections.
Furthermore, this quotient is uniquely determined by the
induced surjection  $$H^0(Z, (k_pL)^{\oplus n_p})\onto 
H^0(Z, F_Z^p((m_p+k_p)L))$$ for all $k_p$ large enough.
We can ensure that $h^0(Z,k_pL)$ is constant over all of $\cH$.
So this defines an embedding of the relative moduli
space of coherent sheaves with Hilbert polynomial $P^p$ over $\cH$
into a Grassmannian bundle ${\rm Gr}(V_p)$ of the vector bundle
$V^p:=(\pi_\cH)_*(k_pL)^{\oplus n_p}$. This is the standard 
construction, due to Grothendieck \cite{grothendieck-fga-iv}, 
of an embedding of the quot-scheme into a Grassmannian,
performed relatively over $\cH$.

The inclusion $F^p_Z\hookrightarrow F^{p-1}_Z$ is an element of
$H^0(Z, (F_Z^p)^*\otimes F_Z^{p-1})$. This vector space
includes into $H^0(Z, (m_pL)^{\oplus n_p}\otimes F_Z^{p-1})$
and by choosing $m_p\gg m_{p-1}$, we can ensure that the latter
receives a surjection from $H^0(Z,(m_pL)^{\oplus n_p}\otimes (-m_{p-1}L)^{\oplus n_{p-1}}).$ Thus, the inclusion $F_Z^p\hookrightarrow F_Z^{p-1}$
is determined by an $n_p\times n_{p-1}$-matrix of
global sections of $(m_p-m_{p-1})L$, uniquely up to
a subspace of this vector space of matrices.
Choosing $k_p$ so that $k_{p-1}+m_{p-1} = k_p + m_p$ we can insure that
$F_Z^p\into F_Z^{p-1}$ is induced by an inclusion 
$V_{Z}^p\into V_{Z}^{p-1}$ of the fibers over $Z\in \cH$.

Thus, the isomorphism type of $F^\bullet_Z$
as a filtered vector bundle is determined
by
\begin{enumerate}
    \item an element in the Grassmannian
    ${\rm Gr}(V^p)$ for each $V^p$ 
    (determining $F^p$) and
    \item a collection of inclusions 
    $i_p\colon V^p\to V^{p-1}$ (determining
    $F^p\to F^{p-1})$.
\end{enumerate}
Denote by ${\rm Fl}(F^\bullet_Z)$ 
the space of all such collections. The isomorphism type of $F^\bullet_Z$ 
is uniquely determined by a
$J'$-orbit on ${\rm Fl}(F^\bullet_Z)$,
for $J'$ an algebraic group.
Concretely, $J'$ is the 
group parameterizing changes-of-basis
of $H^0(Z,F^p_Z(m_pL))$ and
changes-of-lift of the 
inclusions $F_Z^p\into F_Z^{p-1}$.

Let $\cH_{\rm filt}$ be the principal
$J'$-bundle consisting of a filtered
vector bundle $F^\bullet_Z$ on some $Z\in \cH$ with
Hilbert polynomial $P^\bullet$,
together with its rigidifying data in
${\rm Fl}(F^\bullet_Z)$. We have a forgetful
map $\cH_{\rm filt}\to \cH$.

Over $\cH_{\rm filt}$, we construct
the relative moduli space
$\cH_{(F^\bullet, \nabla)}^o\to \cH_{\rm filt}$
of algebraic connections $\nabla$ on $F^0$. Applying the above construction
for each filtered flat vector bundle 
$(F_i^\bullet,
\nabla_i)$,
we get a parameter space 
$$\cH^o_{\rm Data} = 
\cH_{(F^\bullet_i, \nabla_i)_{1\leq i \leq k}}$$
defined as the fiber product of $\cH^o_{(F_i^\bullet, \nabla_i)}$ for $i=1,\dots,k$ over $\cH$.
Finally, take an algebraic 
compactification $\cH_{\rm Data}^o\into \cH_{\rm Data}$.

As in Theorem \ref{main1}, we have a principal $J$-bundle
$\bP\to \fD_0$ with $J=\PGL(N+1)$ corresponding to changes of
basis of $H^0(Z,nL)$. Over $\bP$, 
we have a principal
$J'$-bundle $\bP'\to \bP$ 
consisting of the space of all 
rigidifying data for the tuple
$(F^\bullet_i, \nabla_i)\vert_Z$ of filtered
flat vector bundles, as above.
We also have an algebraic connection $\nabla_i\vert_Z$
on $F^0_i\vert_Z$. 
So there is a holomorphic classifying map
$\bP'\to \cH_{\rm Data}$, which is
$J'$- and $J$-equivariant for the actions
on the source and target. We may
now apply the argument of Lemma \ref{orbit-mero}
(which easily generalizes to a sequence
of principal bundles)
to conclude that $\fD_0$ is Moishezon.
\end{proof}

\section{Algebraicity of the non-abelian
Hodge locus}

We now apply the general results of the previous
section to the polarized
distribution manifold $(X_\Gamma, \Xi, L, h)$
where $X_\Gamma = \Gamma\backslash \bD$ for $\Gamma$
cocompact torsion free, $\Xi$ is the Griffiths
distribution, $L$ is the Griffiths line bundle, and $h$ is the 
equivariant hermitian metric. Let 
$G=G_1\times \cdots \times G_k$
be the decomposition of the semisimple group
$G=\bfG^{\rm ad}(\R)^+$ into $\R$-simple factors.
These give the $\C$-simple factors of $G_\C$
by \cite[4.4.10]{simpsons-ihes-local-systems}.

We have a decomposition
$\bD = \bD_1\times\cdots \times \bD_k$
and on each factor $\bD_i$ we have a filtered
vector bundle with flat connection. Let
$(F^\bullet_i,\nabla_i)$ be the pullbacks of these
to $\bD$. Then, they descend to $X_\Gamma$
even when $\Gamma$ does not split as a product
of lattices $\Gamma_i\subset G_i$.
Let $V_i$ denote the $\C$-local system on $X_\Gamma$
of flat sections of $(F^0_i,\nabla_i)$.

\begin{definition} We define the 
{\it Hodge data of GAGA type}
$${\rm Hodge}_{X_\Gamma} = (F^\bullet_i, \nabla_i)_{1\leq i \leq k}$$ 
to be this $k$-tuple of
filtered flat vector bundles.
\end{definition}

\begin{remark} It is important to remark that 
the universal filtered flat vector bundle
$(F^\bullet,\nabla) = \textstyle\bigoplus_{i=1}^k 
(F^\bullet_i,\nabla_i)$
is not the same data of GAGA type as above! 
It may be impossible  to tell, a priori,
how $(F^\bullet,\nabla)$ splits, 
upon restriction to $Z\subset X_\Gamma$.
\end{remark}

\begin{remark}\label{above}
Let $Z\in \fD^{\Xi}$ be reduced and irreducible. 
Suppose $\wZ\to Z$ is a resolution of
singularities. Then $\wZ$ admits a $\Z$-PVHS by pulling
back $(V_\Z, F^\bullet,\nabla)$.
The pullback of ${\rm Hodge}_{X_\Gamma}=(F^\bullet_i,\nabla_i)_{1\leq i\leq k}$
constitutes the data of the collection
of simple factors of the $\C$-VHS.
Let $V$ be the local system of flat sections of
$\nabla_{\wZ}$.

The $\Z$-PVHS on $\wZ$, and thus, the period
map $\Phi\colon \wZ\to X_\Gamma$, is recoverable
from $(Z,{\rm Hodge}_Z)$ and
one critical missing piece of information:
the location of the integral lattice 
$V_{\Z,*}\hookrightarrow V_*$ in a fiber over
some base point $*\in \wZ$. This is the only data
which cannot be captured coherently on
$X_\Gamma$, and to which GAGA does not apply.
\end{remark}

Now, we leverage the fact that the lattice $V_{\Z,*}$
must be invariant under parallel transport.

\begin{proposition}\label{scale-factors}
Let $Z\in \fD^{\Xi}$ be irreducible and 
reduced, and suppose $\wZ\to Z$ is a resolution of 
singularities. Let $(V_\Z,F^\bullet)$ be the corresponding
pullback $\Z$-PVHS and let $*\in \wZ$ be a base point. 
Let $$\rho\colon \pi_1(\wZ,*)\to \GL(V_{\Z,*})$$
be the monodromy representation and let
$H=\prod_{i\in I} G_i\subset G$ be the collection of simple
factors in which
${\rm im}\,\rho$ is Zariski-dense. Fixing a frame of $V_{\Z,*}$,
the infinitesimal changes-of-frame giving rise to a lattice 
preserved by $\rho$ are contained in 
$$\prod_{i\in I}\C\times \prod_{i\notin I}\mathfrak{gl}(V_i).$$
\end{proposition}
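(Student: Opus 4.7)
The plan is to interpret lattice preservation by $\rho$ as a centralizer condition and then to compute this centralizer via the Deligne-André structure of the algebraic monodromy group $\bfH$, applied blockwise along the splitting $V_{\C,*}=\bigoplus_i V_{i,*}$ coming from the pullback of ${\rm Hodge}_{X_\Gamma}$.

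First I would observe that an infinitesimal change of frame $X\in \mathfrak{gl}(V_{\C,*})$ deforms $V_{\Z,*}$ to $g\cdot V_{\Z,*}$ with $g=\exp(\epsilon X)$, and this new lattice is $\rho$-invariant iff $g^{-1}\rho(\gamma)g\in \GL(V_{\Z,*})$ for every $\gamma\in\pi_1(\wZ,*)$. Since $\rho(\gamma)$ already lies in the \emph{discrete} subgroup $\GL(V_{\Z,*})\subset \GL(V_{\C,*})$, a one-parameter family of such perturbations must be constantly equal to $\rho(\gamma)$ itself, so $g$ commutes with $\rho(\gamma)$ and infinitesimally $[\rho(\gamma),X]=0$ for all $\gamma$. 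The infinitesimal changes of frame in question are therefore precisely the centralizer of $\rho(\pi_1)$ in $\mathfrak{gl}(V_{\C,*})$.

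Next, since commuting with a group element is a polynomial (hence Zariski-closed) condition, this centralizer coincides with the centralizer of the Zariski closure of $\rho(\pi_1)$. The identity component of that closure is, up to replacing $\wZ$ by a finite étale cover, the algebraic monodromy group $\bfH$, which by a theorem of Deligne and André is a normal subgroup of $\bfG^{\rm der}$. Passing to adjoint groups, $\bfH^{\rm ad}$ is a product of some of the $\R$-simple factors of $\bfG^{\rm ad}=\prod_i G_i$, and by the very definition of $I$ this product is $\bfH^{\rm ad}=\prod_{i\in I} G_i$.

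Now I would use that the pullback of $\{(F^\bullet_i,\nabla_i)\}$ splits the $\R$-VHS on $\wZ$ as $V_{\C,*}=\bigoplus_i V_{i,*}$, with each Lie algebra $\mathfrak{g}_i$ ($i\in I$) acting on the $i$-th summand and trivially on $V_{j,*}$ for $j\neq i$. Writing $X$ in blocks $X_{ij}\colon V_{j,*}\to V_{i,*}$ and imposing commutation with every $\mathfrak{g}_i$, $i\in I$: the diagonal block $X_{ii}$ is a $G_i$-intertwiner of the irreducible representation $V_{i,*}$ and thus equals a scalar by Schur, forcing $X_{ii}\in \C$; an off-diagonal block $X_{ij}$ with $i\in I$, $j\neq i$ must have image in the trivial $\mathfrak{g}_i$-isotypic component of $V_{i,*}$ and must, if also $j\in I$, vanish on the complement of the trivial $\mathfrak{g}_j$-part of $V_{j,*}$, hence vanishes; for $i\notin I$ no further condition is imposed. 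Collecting the surviving blocks places the centralizer inside $\prod_{i\in I}\C\times \prod_{i\notin I}\mathfrak{gl}(V_i)$, as claimed.

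The main obstacle lies in the Schur step of the previous paragraph: one must verify that the factor $V_{i,*}$ is an irreducible, nontrivial $G_i$-representation for $i\in I$, so that its $G_i$-centralizer is exactly $\C$ and the trivial $\mathfrak{g}_i$-isotypic piece vanishes. This irreducibility is not purely formal and ultimately rests on the construction of the factor bundle $(F^\bullet_i,\nabla_i)$ from the $\R$-simple factor $G_i$ of $\bfG^{\rm ad}$ together with the Hodge-theoretic origin of its defining representation.
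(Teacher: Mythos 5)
Your proof follows essentially the same route as the paper's: you identify lattice-preserving infinitesimal changes of frame as the centralizer of ${\rm im}(\rho)$, pass to the centralizer of $\bfH(\R)$ by Zariski closure, and then apply Schur's lemma blockwise along $V = \bigoplus_i V_i$. The paper compresses this into two sentences; your discreteness argument for the commutation condition and the explicit block analysis are correct elaborations of what the paper leaves implicit. The irreducibility of $V_i$ as a $(G_i)_\C$-representation, which you rightly flag as the load-bearing input, is likewise simply asserted in the paper's proof rather than argued.
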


\begin{proof} An infinitesimal change-of-frame $a\in \mathfrak{gl}(V_*)$ resulting in a new monodromy-invariant
lattice is exactly a matrix commuting with ${\rm im}(\rho)$,
and thus commuting with $\bfH(\R)$.
Since $V_i$ is an irreducible representation
of $(G_i)_\C$, Schur's lemma
implies that $a$ acts by
a scalar $\lambda_i$ on each summand $V_i\subset V$
for which $G_i\subset H$.
\end{proof}

\begin{definition} Given any analytic subspace
$Z\subset X_\Gamma$ we define
$\Gamma_Z$ as the image of $\pi_1(\wZ)\to \Gamma$ for some resolution of singularities $\wZ\to Z^{\rm red}$.\end{definition}

\begin{lemma}\label{normal2} Let $Z^\nu\to Z^{\rm red}$ be the
normalization. Then, $\Gamma_Z\subset \Gamma$ is the image
of $\pi_1(Z^\nu)$. It is also the image of $\pi_1(U)$
for any dense open subset $U\subset (Z^{\rm red})_{\rm sm}$. \end{lemma}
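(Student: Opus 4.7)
The plan is to compare the fundamental groups of the three spaces $\widetilde Z$, $Z^\nu$, and $U$ via natural maps between them, and use standard surjectivity results: proper birational maps to a normal base induce surjections on $\pi_1$, and open immersions whose complement is a proper closed analytic subset also induce surjections. Since $\widetilde Z$ is smooth hence normal, the universal property of normalization factors the resolution as $\widetilde Z \xrightarrow{\tilde\pi} Z^\nu \xrightarrow{\nu} Z^{\rm red}$. The composite $\widetilde Z \to Z^{\rm red} \hookrightarrow X_\Gamma$ agrees with the composite via $Z^\nu$, so the monodromy representation $\pi_1(\widetilde Z) \to \Gamma$ factors through $\pi_1(Z^\nu) \to \Gamma$, giving the inclusion $\Gamma_Z \subseteq \operatorname{im}(\pi_1(Z^\nu)\to \Gamma)$.

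For the reverse inclusion, I would argue that $\tilde\pi:\widetilde Z \to Z^\nu$ is proper and birational with normal target, so by Zariski's Main Theorem its fibers are connected. A proper surjective morphism of complex analytic varieties with connected fibers induces a surjection on fundamental groups (this is classical; e.g.\ SGA 1, Exp.~X, or a direct argument by lifting loops and the path-lifting property after retraction onto fibers). Hence $\tilde\pi_*: \pi_1(\widetilde Z) \twoheadrightarrow \pi_1(Z^\nu)$, so the images in $\Gamma$ of the two groups coincide, which is the first statement.

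For the second statement, since $\nu$ is an isomorphism over $(Z^{\rm red})_{\rm sm}$, a dense open $U \subset (Z^{\rm red})_{\rm sm}$ is identified with a dense open of $Z^\nu$, and its complement in $Z^\nu$ is a proper closed analytic subset of the irreducible normal variety $Z^\nu$. The inclusion of a Zariski-open dense subset of an irreducible normal complex analytic variety induces a surjection on $\pi_1$ (the kernel is generated by meridians around codimension-one components of the complement; the complement of complex codimension at least two contributes nothing). Thus $\pi_1(U) \twoheadrightarrow \pi_1(Z^\nu)$, and combining with the first part, $\operatorname{im}(\pi_1(U) \to \Gamma) = \operatorname{im}(\pi_1(Z^\nu) \to \Gamma) = \Gamma_Z$.

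The main subtlety is really a definitional one: the lemma must be read so that $U$ is Zariski-open (equivalently, has Zariski-closed complement in $Z^\nu$), as otherwise the $\pi_1$-surjectivity of a dense open inclusion can fail in the classical topology. Assuming this reading, which is standard in the algebro-geometric setting of the paper, no genuine obstacle remains; the argument is a two-step application of well-known surjectivity results. I would flag only the need to cite a precise reference for the ``proper morphism with connected fibers gives $\pi_1$-surjection'' statement in the complex analytic (possibly singular) category, since $Z^\nu$ is not assumed smooth.
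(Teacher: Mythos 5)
Your proof is correct and takes essentially the same approach as the paper, which also rests on the two standard $\pi_1$-surjectivity facts (removing a proper closed analytic subset from a normal variety, and a proper surjective map with connected fibers). The only cosmetic difference is that you apply Zariski's Main Theorem directly to the proper birational map $\widetilde Z\to Z^\nu$, whereas the paper routes the argument through the smooth locus $Z^\nu_{\rm sm}$ and its preimage in $\widetilde Z$; and your caveat that $U$ should have closed analytic complement is an implicit assumption of the paper's proof as well.
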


\begin{proof} Let $Z^\nu_{\rm sm}$ denote
the nonsingular locus. Then
$\pi_1(Z^\nu_{\rm sm})\onto \pi_1(Z^\nu)$
is surjective. The same property holds
for the inverse image of $Z_{\rm sm}^\nu$
or $U$ in any desingularization. Thus,
$\pi_1(\wZ)$, $\pi_1(Z^\nu_{\rm sm})$, $\pi_1(Z^\nu)$,
$\pi_1(U)$
all have the same image in $\Gamma=\pi_1(X_\Gamma)$. \end{proof}

\begin{proposition}\label{pi1} Let $Z\in \fD^{\Xi}$ be irreducible
and reduced. The group $\Gamma_Z$ only jumps in size,
in an open neighborhood of $Z\in \fD^{\Xi}$.
\end{proposition}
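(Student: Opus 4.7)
The plan is to show that for any $g \in \Gamma_{Z_0}$, there is an analytic neighborhood $U_g$ of $Z_0$ in $\fD^{||}$ with $g \in \Gamma_{Z_t}$ (up to $\Gamma$-conjugation) for every $Z_t \in U_g$; then, since $\Gamma$ is finitely generated and hence so is $\Gamma_{Z_0}$, intersecting finitely many such neighborhoods yields the required $U$. By Lemma \ref{normal2}, I can choose a smooth loop $\alpha\colon S^1 \to (Z_0^{\rm red})_{\rm sm}$ whose class maps to $g$ under the composition $\pi_1((Z_0^{\rm red})_{\rm sm}) \to \pi_1(X_\Gamma) = \Gamma$.

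The core step is to propagate $\alpha$ to a continuous family of loops $\{\alpha_t\}_{t\in U'}$ with $\alpha_t \subset Z_t^{\rm red}$ and $\alpha_0 = \alpha$, all pairwise freely homotopic inside the total space. Let $\pi\colon \mathcal{Z}_U \to U$ denote the universal flat family over a small neighborhood $U$ of $Z_0$, together with its embedding $\mathcal{Z}_U \hookrightarrow U \times X_\Gamma$. Replacing $U$ with a desingularization (whose image contains an open neighborhood of $Z_0$), I may assume $U$ is smooth at $0$. Because fiber smoothness is an open condition for a flat morphism with smooth base, the compact loop $\alpha$, lying in the smooth locus of the fiber $Z_0^{\rm red}$, is contained in the open subset of $\mathcal{Z}_U^{\rm red}$ on which $\pi$ is a holomorphic submersion. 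Choosing a $C^\infty$ horizontal distribution on this submersive open set, parallel transport from $\alpha$ produces the desired family $\alpha_t$ defined over a smaller neighborhood $U' \ni 0$, together with a $C^\infty$ trivialization of a tubular neighborhood of $\alpha$ of the form $\alpha \times U'$.

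The conclusion is then immediate from the discreteness of $\Gamma$. The trivialization above gives a free homotopy between $\alpha$ and $\alpha_t$ inside $\mathcal{Z}_U^{\rm red} \subset U \times X_\Gamma$, and projecting to the second factor yields a free homotopy in $X_\Gamma$. Hence $[\alpha]$ and $[\alpha_t]$ have the same conjugacy class in $\pi_1(X_\Gamma) = \Gamma$, and combined with Lemma \ref{normal2} applied to $Z_t^{\rm red}$, this places $g$ (up to conjugation) in $\Gamma_{Z_t}$ for every $t \in U'$.

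The main obstacle I anticipate is handling the possibly singular or non-reduced structure of $\fD^{||}$ and of the total space $\mathcal{Z}_U$ near $Z_0$, which could in principle obstruct the submersion step. I resolve this by first desingularizing the base (which only passes to an open neighborhood of $Z_0$), and then by restricting attention to the smooth locus of $Z_0^{\rm red}$, which is the only source of loops contributing to $\Gamma_{Z_0}$ by Lemma \ref{normal2}. Openness of fiber smoothness for flat morphisms over a smooth base then guarantees that, after shrinking $U$, a neighborhood of $\alpha$ in $\mathcal{Z}_U^{\rm red}$ is smoothly submersive over $U$, and Ehresmann-type parallel transport applies.
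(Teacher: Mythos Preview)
Your approach is correct and takes a genuinely different route from the paper's. The paper restricts to analytic arcs $(C,0)\to\fD^{||}$, normalizes the pulled-back total space $\cW=\fZ^\nu$, and uses the identification $\pi_1(\cW_t)\simeq\pi_1(\cW)\simeq\pi_1(\cW_0)$ (via normality of $\cW$ and a codimension-$2$ argument) together with the factorization $Z^\nu\to\cW_0\to Z$ to conclude $\Gamma_Z\subset\Gamma_{Z_t}$. You instead parallel-transport loops from $(Z_0)_{\rm sm}$ into $(Z_t)_{\rm sm}$ via a horizontal distribution on the submersive locus of the family. This is more elementary and works directly over a full neighborhood rather than arc-by-arc; the paper's approach, in exchange, yields the sharper identification $\Gamma_{Z_t}={\rm im}(\pi_1(\cW_0))$ rather than a mere containment.

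Two points need correction. First, ``$\Gamma$ is finitely generated and hence so is $\Gamma_{Z_0}$'' is invalid reasoning: subgroups of finitely generated groups need not be finitely generated. The correct justification is that $\Gamma_{Z_0}$ is a quotient of $\pi_1(\wZ_0)$, where $\wZ_0$ is a compact manifold (a resolution of a closed subspace of the compact $X_\Gamma$). Second, and more substantively, your per-element plan---showing each $g$ lies in $\Gamma_{Z_t}$ up to its own conjugation, then intersecting the $U_g$---does not assemble into a containment $h\Gamma_{Z_0}h^{-1}\subset\Gamma_{Z_t}$ for a \emph{single} $h\in\Gamma$, which is what the application in Theorem~\ref{rigid-thm} (Zariski-density of $\Gamma_{Z_t}$) actually requires: different generators could a priori need different conjugating elements. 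The fix is already implicit in your construction: represent a finite generating set by based loops $\alpha_1,\dots,\alpha_m$ at a common smooth point $*_0\in(Z_0)_{\rm sm}$, and use one horizontal distribution on a neighborhood of their union. Then $*_0$ transports to a single $*_t$, the path $*_0\rightsquigarrow *_t$ in $\mathcal{Z}_U$ projects to one path in $X_\Gamma$, and the resulting single conjugation carries all of $\Gamma_{Z_0}$ into $\Gamma_{Z_t}$ simultaneously.
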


\begin{proof}
Let $(C,0)\to \fD^{\Xi}$
be an analytic arc, and consider the pullback
family $\fZ\to (C,0)$, with $\fZ_0=Z$. Let $\cW = \fZ^\nu$
be the normalization of the total space. The general fiber $\cW_t$ is normal, so
$\Gamma_{Z_t} = {\rm im}(\pi_1(\cW_t))$ by Lemma
\ref{normal2}.
This is the same group for all $t\in C\setminus 0$
if we assume (as we may) that $\cW$ is a fiber bundle
over $C\setminus 0$. There is a deformation-retraction
$\cW\to \cW_0$ to the central
fiber. Tracing an element of $\pi_1(\cW_t)$ through the
retraction, we get a free homotopy from any $\gamma_t\in \pi_1(\cW_t)$
to an element $\gamma_0\in \pi_1(\cW_0)$.

Conversely, we can lift any element of $\pi_1(\cW_0)$ 
to an element of $\pi_1(\cW_t)$: We have $\pi_1(\cW_0)=\pi_1(\cW) = 
\pi_1(\cW\setminus ((\cW_0)_{\rm sing}\cup \cW_{\rm sing}))$ because $\cW$ is
normal and $(\cW_0)_{\rm sing}\cup \cW_{\rm sing}$ 
has codimension $2$. Thus, any element of $\pi_1(\cW_0)=\pi_1(\cW)$ can
be represented by a loop in $\cW$ avoiding both $(\cW_0)_{\rm sing}$
and $\cW_{\rm sing}$. Then, 
this loop can be deformed off its intersection with $(\cW_0)_{\rm sm}$
as $(\cW_0)_{\rm sm}$ is a locally smooth divisor in $\cW_{\rm sm}$.
So we can represent the loop in $\cW\setminus \cW_0$.
Finally, $\pi_1(\cW\setminus \cW_0)$ is a $\Z$-extension of $\pi_1(\cW_t)$
because it is a fiber bundle over the punctured disk $C\setminus 0$.

Thus, $\Gamma_{Z_t}={\rm im}(\pi_1(\cW_0))$.
Then the natural morphism
$\cW_0\to \fZ_0=Z$
is a finite birational
morphism because $Z$ is reduced.
Thus, it factors the normalization
$Z^\nu \to \cW_0 \to Z$
and so ${\rm im}(\pi_1(Z^\nu))=
\Gamma_{Z}\subset \Gamma_{Z_t}= {\rm im}(\pi_1(\cW_0)).$ Thus
$\Gamma_Z$ only jumps in size.
\end{proof}

\begin{remark} The same statement holds, up to passing to
a finite index subgroup of $\Gamma_Z$, 
when $Z$ is generically non-reduced.
\end{remark}

\begin{theorem}\label{rigid-thm}
If $Z\subset X_\Gamma$ is irreducible, reduced,
and $\Gamma_Z$ is Zariski-dense in $G$, then any
irreducible component $\fC\subset (\fD^{\Xi})^{\rm red}$ containing
$Z$ is locally ${\rm Hodge}_{X_\Gamma}$-determined. In particular, $\fC$ is Moishezon by Corollary \ref{cvhs-embed}.\end{theorem}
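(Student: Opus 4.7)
The plan is to verify that $\fC$ has maximal variation with respect to $\mathrm{Hodge}_{X_\Gamma}$; the Moishezon conclusion then follows from Meta-Theorem \ref{meta}. The main idea is that when $\Gamma_Z$ is Zariski-dense in $G$, the integral structure of the $\Z$-PVHS is rigid up to scalar rescaling on each simple factor $V_i$, and such scalars, lying in the center of $\bfG$, act trivially on the adjoint period domain $\bD$.

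By Proposition \ref{pi1}, after restricting to a small analytic neighborhood $U\subset \fC$ of $Z$, every $Z_t\in U$ satisfies $\Gamma_{Z_t}\supseteq \Gamma_Z$, so $\Gamma_{Z_t}$ is Zariski-dense in $G$ and, by projection, in every simple factor $G_i$. Now suppose $Z_s,Z_t\in U$ admit an isomorphism of pairs $(\phi,\psi)\colon (Z_s,\mathrm{Hodge}_s)\simeq (Z_t,\mathrm{Hodge}_t)$, with $\psi=(\psi_i)_i$ an isomorphism of the $k$-tuple of filtered flat bundles. Lifting to resolutions of singularities yields $\tilde\phi\colon \wZ_s\to \wZ_t$ and $\tilde\psi=(\tilde\psi_i)_i$. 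Since the underlying $\C$-local system of $(F^0_i,\nabla_i)$ is its sheaf of flat sections, $\tilde\psi_i$ intertwines the restrictions to $V_{i,\C}$ of the monodromy representations $\rho_s$ and $\rho_t\circ\tilde\phi_*$ of $\pi_1(\wZ_s)$.

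Now both the original integral structure $V_{i,\Z}|_{\wZ_s}$ and the transported one $\tilde\psi_i(\tilde\phi^* V_{i,\Z}|_{\wZ_t})$ are $\rho_s|_{V_i}$-invariant $\Z$-lattices in the common $\C$-local system, with $\rho_s|_{V_i}$ Zariski-dense in $G_i$. Globalizing the infinitesimal rigidity of Proposition \ref{scale-factors} (applied with $I=\{1,\dots,k\}$), these two lattices must differ by a scalar $\lambda_i\in \C^*\cdot\mathrm{Id}_{V_i}$ on each factor. Since scalars lie in the center of $\bfG$, which projects trivially to the adjoint quotient $\bfG^{\mathrm{ad}}$, they act trivially on $\bD$ and hence on $X_\Gamma$. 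The scalar discrepancy between the two integral structures is therefore invisible to the period maps, and $\Phi_s=\Phi_t\circ\tilde\phi$ as maps $\wZ_s\to X_\Gamma$. In particular $Z_s=\Phi_s(\wZ_s)=\Phi_t(\wZ_t)=Z_t$. Distinct points of $U$ thus yield non-isomorphic Hodge data, establishing maximal variation; Meta-Theorem \ref{meta} then concludes $\fC$ is Moishezon.

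The main obstacle is justifying the integrated form of Proposition \ref{scale-factors}, namely that two Zariski-dense-monodromy-invariant $\Z$-lattices in a common $\C$-local system differ precisely by diagonal scaling on each irreducible summand $V_i$, and rigorously verifying that this scalar ambiguity vanishes in the descent from $\bfG$ to the adjoint Mumford-Tate quotient governing the period map.
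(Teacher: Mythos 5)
Your overall strategy matches the paper's---restrict to a small $U$ where all $Z_t$ stay irreducible, reduced, with $\Gamma_{Z_t}$ Zariski-dense via Proposition~\ref{pi1}, then use Proposition~\ref{scale-factors} to show that the residual ambiguity in the integral lattice acts trivially on the period image. The difference, and it is a real one, is in how Proposition~\ref{scale-factors} is applied. The paper's proof reduces to showing there is no holomorphic \emph{arc} $C\to U$ along which the isomorphism class of $(Z_t,\{(F^\bullet_i,\nabla_i)\})$ is constant. Along such an arc, the family of $\rho_0$-invariant lattices $V^{(t)}_{\Z,*}\subset V_*$ has a well-defined derivative at each $t$, and Proposition~\ref{scale-factors} constrains this derivative to lie in $\prod_i \C\cdot\mathrm{Id}_{V_i}$; since this is a Lie subalgebra, the path integrates to lie in the connected group $\prod_i\C^*\cdot\mathrm{Id}_{V_i}$, which preserves $F^\bullet=\bigoplus_i F_i^\bullet$ and hence the period map. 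That is how the infinitesimal statement is legitimately upgraded.

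You instead take two distinct points $s\neq t\in U$ with an isomorphism $(\phi,\psi)$ of Hodge data, and assert that the two $\rho_s$-invariant lattices $V_{\Z,s}$ and $\tilde\psi(\tilde\phi^*V_{\Z,t})$ in the same $\C$-local system must differ by $\prod_i\lambda_i\cdot\mathrm{Id}_{V_i}$. You flag this as ``the main obstacle,'' and you are right to: this globalized statement is false in general. For a concrete obstruction, take $\rho$ with image $\Gamma(2)\subset\SL_2(\Z)$ acting standardly on $\C^2$ (irreducible, Zariski-dense in $\SL_2$); then both $\Z^2$ and $L=\{(a,b)\in\Z^2 : a\equiv b\bmod 2\}$ are $\rho$-invariant lattices, yet $L\neq\lambda\Z^2$ for any $\lambda\in\C^*$. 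Two invariant lattices can differ by much more than a diagonal scalar; only a \emph{continuous one-parameter deformation} of a lattice, anchored at a given one and constrained infinitesimally, is forced into the scalar orbit. So as written, your argument does not close. The fix is precisely the arc reduction: failure of maximal variation on every neighborhood forces a positive-dimensional locus of constant isomorphism type, hence an arc, and on an arc the infinitesimal rigidity of Proposition~\ref{scale-factors} integrates cleanly.

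A small additional caveat: the phrase ``scalars lie in the center of $\bfG$'' is not quite the right justification, since the scalings $\lambda_i\in\C^*$ need not land in $\bfG$ at all. The correct point (and what the paper says) is that scaling each summand $V_i$ preserves the direct-sum Hodge flag $F^\bullet=\bigoplus_i F_i^\bullet$ as a filtration, hence fixes the period point in $\bD$.
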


\begin{proof} We must find an analytic open
set $U\subset \fC$ for which $$(Z_s, (F^\bullet_i,\nabla_i)_{1\leq i \leq k})\not\simeq (Z_t, (F_i^\bullet,\nabla_i)_{1\leq i\leq k})$$
for all $s\neq t\in U$. 
Choose $U$ to be a small neighborhood of $Z\in \fC$.
Since $Z$ is irreducible and reduced, 
we can assume that $Z_t$ is irreducible and reduced
for all $t\in U$. Applying Proposition \ref{pi1},
we ensure that all $Z_t\in U$ satisfy the property
that $\Gamma_{Z_t}$ is Zariski-dense in $G$.
It suffices to show there is no nonconstant
holomorphic
arc $C\to U$ for which the isomorphism type
of the tuple
$(Z_t,(F^\bullet_i,\nabla_i)_{1\leq i \leq k})$ 
is constant over $t\in C$.

Suppose for the sake
of contradiction that $C$ is such an arc. 
Choose a smooth base point $*\in Z_t=Z$ (observing
that $Z_t\simeq Z$ are isomorphic for all $t\in C$
by hypothesis).
Then by Proposition
\ref{scale-factors}, the only deformations of the lattice $V_{\Z,*}\subset V_*$ which remain invariant under $\nabla_{\wZ_t}=\bigoplus_{1\leq i\leq k} \nabla_i$ are those which differ by scaling each 
summand of $V=\bigoplus_{1\leq i\leq k} V_i$ by some $\lambda_i\in \C^*$.
But such scaling does not change
the period map, as the Hodge flag
$F^\bullet =\bigoplus_{1\leq i\leq k} F_i^\bullet$ is also
preserved by this scaling action.
But then the period maps
$\Phi_t\colon Z_t=Z\to X_\Gamma $ are all
the same, contradicting that $C$ parameterizes
a non-constant family of horizontal subspaces.
In other words, 
${\rm Hodge}_{X_\Gamma}$ is determinative on 
$U$.
\end{proof}

\begin{remark}\label{db} One could as easily have worked 
with Barlet spaces, since
the support morphism $[\cdot]\colon \fC\to \fB^{\Xi}$ will be
bimeromorphic onto its image, under the assumptions
of Theorem \ref{rigid-thm}. The disadvantage is that
the embedding into a compact, algebraic parameter space, as in
Example \ref{cvhs-embed}, is unclear for 
Barlet spaces. \end{remark}

\begin{theorem} Let $\cY\to \cS$ be a smooth projective
family over a quasiprojective variety $\cS$. Then
the non-abelian Hodge locus with $\Q$-anisotropic monodromy ${\rm NHL}_a(\cY/\cS,\GL_n)$ is algebraic.
\end{theorem}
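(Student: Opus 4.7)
The plan is to cut $\mathrm{NHL}_c$ into finitely many pieces indexed by monodromy, and for each piece to use the Moishezon parameter space of period images provided by Section 4 to identify the piece with a relative Hom-scheme of bounded-degree maps between projective varieties, following the outline in the introduction.

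First, as in the discussion after Theorem \ref{main}, I would reduce to the case $\cY\to\cS$ smooth projective. Next, I would apply Corollary \ref{monodromy-finite} to obtain a finite list $[\rho_1],\dots,[\rho_m]$ of conjugacy classes of compact type monodromy representations which occur on some fiber; it suffices to show that each stratum $N_\alpha\subset \mathrm{NHL}_c$ of flat bundles whose monodromy is conjugate to $\rho_\alpha$ is algebraic. Fix $\rho=\rho_\alpha$ and let $\bfG,\bfH,\Gamma=\Gamma_H\times\Gamma_{H'}$ be the generic Mumford--Tate group, algebraic monodromy group, and neat arithmetic lattice determined by $\rho$, as in Section 2. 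The compact type hypothesis forces $\Gamma_H$ to be cocompact in $\bfH^{\mathrm{ad}}(\R)^+$, so $X_{\Gamma_H}=\Gamma_H\backslash\bD_H$ is a compact Hodge manifold equipped with the polarized distribution data $(T^{||},L,h)$ of Section 4. Every flat bundle in $N_\alpha$ gives a period map $\Phi_s\colon Y_s\to X_{\Gamma_H}$ whose image $Z_s:=\Phi_s(Y_s)$ is a parallel analytic cycle of $L$-volume bounded in terms of fixed intersection numbers on $\cY/\cS$.

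I would then invoke Theorem \ref{properness} to conclude that the $Z_s$ lie in only finitely many components $\fC_1,\dots,\fC_N$ of the parallel Douady space $\fD^{||}$ of $X_{\Gamma_H}$. Since by construction the monodromy of the $\Z$-PVHS is Zariski-dense in $H=\bfH^{\mathrm{ad}}(\R)^+$, the group $\Gamma_{Z_s}$ is Zariski-dense whenever $\Phi_s$ is nonconstant, and Theorem \ref{rigid-thm} then implies each $\fC_j$ is Moishezon. After a bimeromorphic modification, I may assume $\fC_j^{\mathrm{proj}}$ is a projective scheme carrying a universal family $u_j\colon \fZ_j^{||}\to\fC_j^{\mathrm{proj}}$ whose fibers are projective by Theorem \ref{griffiths-algebraic}.

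Finally, I would identify $N_\alpha$ as a Hom-scheme. A point of $N_\alpha$ above $s\in\cS$ corresponds to a component index $j$, a point $[Z]\in\fC_j^{\mathrm{proj}}$, and a surjection $f_s\colon Y_s\onto Z$ for which $f_s^*(F^\bullet,\nabla)$ represents $[\rho]$; the integral structure is rigid along such isomonodromic deformations by Proposition \ref{scale-factors}. Hence $N_\alpha$ embeds as a locally closed subspace of $\bigsqcup_j \cM_j$, where $\cM_j$ is the relative scheme over $\cS\times\fC_j^{\mathrm{proj}}$ parameterizing morphisms of degree at most $d$ from fibers of $\cY/\cS$ to fibers of $\fZ_j^{||}/\fC_j^{\mathrm{proj}}$. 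This $\cM_j$ is algebraic and of finite type by Grothendieck's representability of Hom-schemes between projective varieties, and pushing down along the projective morphism $\cS\times\fC_j^{\mathrm{proj}}\to\cS$ preserves algebraicity. Taking the finite union over $\alpha$ and $j$ gives algebraicity of $\mathrm{NHL}_c(\cY/\cS,\GL_n)$. The main obstacle in executing this plan is verifying Zariski-density of $\Gamma_{Z_s}$ throughout each relevant component of $\fD^{||}$ rather than only at a generic point, which will likely require stratifying by algebraic monodromy group and inductively treating the smaller-monodromy loci; a secondary technical point is to check that the analytic identification of $N_\alpha$ with the Hom-scheme is an isomorphism of analytic and hence, via GAGA over each projective $\fC_j^{\mathrm{proj}}$, algebraic spaces.
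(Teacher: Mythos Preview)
Your plan matches the paper's argument in its essential architecture: finiteness of monodromy representations (Corollary \ref{monodromy-finite}) reduces to finitely many compact targets $X_{\Gamma_H}$; a uniform bound on the Griffiths volume of period images (the paper obtains this explicitly from Simpson's slope inequality \cite[Lemma 3.3]{simpson-moduli-1}, which you should cite rather than leave as ``fixed intersection numbers'') gives finitely many parallel Douady components via Theorem \ref{properness}; Theorem \ref{rigid-thm} makes each such component Moishezon; and the non-abelian Hodge locus is then identified with a relative space of fiber maps into the universal family.

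Two remarks on the execution. First, your stated ``main obstacle''---Zariski-density of $\Gamma_{Z_s}$ throughout the component---is not an obstacle: Theorem \ref{rigid-thm} only asks for Zariski-density at a \emph{single} point $Z\in\fC$, and its proof already propagates this to a neighborhood via Proposition \ref{pi1}. The paper simply observes that $Y_s\to\Phi_s(Y_s)$ factors through the normalization, so $\Gamma_{\Phi_s(Y_s)}$ contains the monodromy image of $\pi_1(Y_s)$; after the reduction to Zariski-dense monodromy (splitting off the $\bfH'$ factor, as you do), one period image suffices. No stratification by algebraic monodromy group is needed. Second, the paper does \emph{not} projectivize $\fC$ and invoke Grothendieck's Hom-scheme. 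It keeps $\fC$ Moishezon, argues that the relevant component of $\mathrm{Hom}_{\mathrm{fiber}}(\cY,\fZ)$ is Moishezon, and then uses that a Moishezon subset of the algebraic variety $M_{\rm dR}(\cY/\cS,\GL_n)$ is algebraic; this avoids the technicalities of extending the universal family across a bimeromorphic modification. Your route via projectivization could be made to work, but the paper's is cleaner.
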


 \begin{proof} 
Let $Y_s$ be a fiber. As we saw in \Cref{hodge-theory}, the data of a $\Z$-PVHS on $Y_s$ with generic Mumford-Tate group $\bfG\subset \GL_n$ and monodromy $\bfH$ is completely 
determined by the following data:
\begin{enumerate}
    \item a holomorphic, Griffiths transverse period map $\Phi_s:Y_s\rightarrow X_{\Gamma_H}$ whose monodromy image is Zariski-dense, and
    \item a point in $\bD_{H'}$ corresponding to a
    $\Q$-summand on which the $\Z$-PVHS is locally constant. 
\end{enumerate}

Thus, up to passing to a finite index subgroup
of fixed level, the monodromy representation
of such a $\Z$-PVHS has a reduction
of structure to the product
$\bfG = \bfH\times \bfH'$ where the corresponding
local system has trivial monodromy
on the summand associated to $\bfH'$.

Hence, possibly passing to a smaller value of $n$,
we can restrict our attention to the 
$(Y_s,\nabla_s)\in {\rm NHL}_a(\cY/\cS,\GL_n)$ which
underlie a $\Z$-PVHS $\bV$ with Zariski-dense monodromy
in the generic Mumford-Tate group.

By Corollary \ref{monodromy-finite},
only finitely many representations of $\pi_1(Y_s)$ with $\Q$-anisotropic monodromy
can appear in this manner. Thus, there is 
a finite list of compact Hodge manifolds $X_\Gamma$ which
receive all the period maps for such $(Y_s,\nabla_s)$.
So to prove the theorem,
we may restrict our attention to a single compact period
target $\Gamma\backslash \bD=X_\Gamma$.

It remains to show: The space of pairs 
$(Y_s,\Phi_s)$ of a fiber of $\cY\to \cS$, together with a 
Griffiths' transverse map $\Phi_s \colon Y_s\to X_\Gamma$
with Zariski-dense monodromy is an algebraic variety (and the maps into the 
relative de Rham and Dolbeault spaces are algebraic).
We first prove that each irreducible analytic
component of the space of pairs $(Y_s,\Phi_s)$ is algebraic, then we prove that the number of components is finite.
 
 Fix an irreducible analytic component $B\subset {\rm NHL}_a(\cY/\cS,\GL_n).$
 There is an analytic Zariski open subset $B^o\subset B$ on which ${\rm im}(\Phi_s)$,
 taken with its reduced scheme structure, form
 a flat family of closed analytic subspaces of $X_\Gamma$ 
 over $B^o$.
So there is an irreducible component $\fC\subset \fD^{\Xi}$
 for which ${\rm im}(\Phi_s)\in \fC$ for $(Y_s,\Phi_s)\in B^o$.
 
 Since $Y_s$ is smooth, the
 morphism $Y_s\to \Phi_s(Y_s)$ factors through
 the normalization $Y_s\to \Phi_s(Y_s)^\nu$. Thus,
 $\Gamma_{\rm im(\Phi_s)}$ contains the image
 of $\pi_1(Y_s)$ in $\Gamma$. 
Since we have restricted to the case
 where the monodromy is Zariski-dense,
 $\fC$ is Moishezon by Theorem \ref{rigid-thm}.
 
 Let $\fZ\to \fC$ be the universal family. For all $(Y_s,\Phi_s)\in B^o$,
 the period mapping $\Phi_s$ factors through the inclusion
 ${\rm im}(\Phi_s)\into \fZ$ as a fiber
 of the universal family. That is, we have a map
 $\Theta\colon \cY \times_{\cS} B^o\to \fZ$ for which
 $\Phi = \pi_{X_\Gamma}\circ \Theta$.
 
 The analytic deformations of $(Y_s,\Phi_s)$ in $B$
 are exactly the isomonodromic deformations of the local system
 $V_{\Z}$ on $Y_s$ to nearby fibers, which
 underlie a $\Z$-PVHS. But for $(Y_s,\Phi_s)\in B^o$,
 these are exactly the ways to deform the inclusion
 $\Theta_s\colon Y_s\hookrightarrow \fZ$ of fibers.
 Since $\cY \to \cS$ is algebraic
 and $\fZ\to \fC$ is Moishezon, the irreducible component of
 ${\rm Hom}_{\rm fiber}(\cY/\cS,\fZ)$, the
 space of morphisms from a fiber of $\cY$
 to a fiber of $\fZ$, which contains 
 $(Y_s,\Theta_s)\in B^o$, is Moishezon.

The inclusion into 
$M_{\rm dR}(\cY/\cS,\GL_n)$ is Moishezon because $\nabla_s$ is the pull back
along $\Theta_s$ of
the relative connection on $F^0$
on the universal
family over $\fZ\to \fC$. The relative
connection on $F^0$ is Moishezon, by GAGA.
Thus, $B^o$ and its closure $B$ are algebraic, as they are
Moishezon subsets of an algebraic
variety. The inclusion
into $M_{\rm Dol}(\cY/\cS,\GL_n)$ is Moishezon
by the same reasoning, applied
to the associated graded
of the universal Hodge flag
over $\fZ\to\fC$, equipped with its Higgs field.\vspace{2pt}

Finally, it remains to prove that (1) only finitely
many irreducible components $\fC$ of the horizontal
Douady space appear, and (2) for each one that
appears, the number of irreducible components
of the space ${\rm Hom}_{\rm fiber}(\cY,\fZ)$
is finite.

Let $F^\bullet$ be 
the Hodge filtration on $Y_s$ coming from a period map 
$\Phi_s\colon Y_s\to X_\Gamma$ and let $A\to \cY$ be an 
ample line bundle on the universal family. 
Then by Simpson
\cite[Lemma 3.3]{simpson-moduli-1},
the vector bundles $F^p$ enjoy the following version
of the Arakelov inequality: 
If $m_s$ is an integer for which $T_{Y_s}(m_sA)$ 
is globally generated, then 
$\mu_A(F^{p+1})\leq \mu_A(F^p)+mn$. 
Here $\mu_A$ is the slope with respect to $A$. 
Note that $\mu_A(F^0)=0$
because $F^0$ has a flat structure.
We may choose an $m_s=m$ uniformly over all of $\cS$.
We conclude that the slopes $\mu_A(F^p)$ are bounded,
in a way depending only on $\cY\to \cS$.
In turn, $A^{d-1}\cdot \det(F^p)$
is bounded for all $p$, and so there is an a priori
bound on $A^{d-1}\cdot L$, where $L$ is the Griffiths bundle. It follows that $A^{d-r}\cdot L^r$ is 
bounded for any $r$.

This bounds the Griffiths volume of the image
$\Phi_s(Y_s)$ of any period map, and so by Theorem \ref{properness}, only finitely many components of the horizontal Barlet space $\fB^{\Xi}$ of $X_\Gamma$ occur as 
the support of period images from $Y_s$. The same
finiteness holds for relevant components $\fC$ of the horizontal
Douady space, as we are taking period images with their
reduced scheme structure, see Remark \ref{db}.

Finally, the bounds on $A^{d-r}\cdot L^r$ also
bound the volume of the graph $\Gamma(\Theta_s)$ of a morphism 
$(Y_s,\Theta_s)\in {\rm Hom}_{\rm fiber}(\cY,\fZ)$, viewed as 
a subvariety of $\cY\times \fZ$. We conclude that there
must be only finitely many components of 
${\rm Hom}_{\rm fiber}(\cY,\fZ)$.
 \end{proof}

\begin{remark}
    The algebraicity result also holds for $\bfG$-bundles, for any algebraic subgroup $\bfG\subset \mathrm{GL}_n$. 
\end{remark}

It is straightforward to construct $\Z$-PVHS with $\Q$-anisotropic monodromy and which are of Shimura type, by taking subvarieties of compact Shimura varieties and it would be interesting to construct examples which are not of Shimura type. Notice also that in the Shimura case, the proof of algebraicity of the irreducible components of the non-abelian Hodge locus are easier, as the arithmetic quotients of the period domains involved are algebraic varieties. But the finiteness of monodromy representations was only known for Shimura varieties of abelian type by \cite{deligne87}. As a corollary of our work, we obtain the following:

\begin{corollary}
Let $\bfG$ be a Shimura group of exceptional type. There are only finitely many representations $\rho:\Pi_g\rightarrow \bfG(\R)$ which underlie a $\Z$-PVHS with $\Q$-anisotropic monodromy, up to the action of the mapping class group, and in $M_{\rm dR}(\cC_g/\cM_g, \GL_n)$, the corresponding
flat bundles form an algebraic subvariety. 
\end{corollary}

\bibliographystyle{alpha}
\bibliography{bibliographie}

\end{document}